\documentclass[11pt]{article}
\usepackage{latexsym}
\usepackage{amsthm}
\usepackage{amsmath}
\usepackage{amssymb}
\usepackage{amsfonts}
\usepackage{graphicx}
\usepackage{graphics}
\usepackage{verbatim}
\usepackage{bm}
\usepackage{natbib}


\usepackage{latexsym}
\usepackage{amsmath}
\usepackage{amsfonts}
\usepackage{graphicx}
\usepackage{graphics}
\usepackage{epsfig}
\usepackage{subfigure}
\usepackage{verbatim}
\usepackage{amssymb}
\usepackage{xr}

\setlength{\textheight}{9.1in}

 \setlength{\textwidth}{6.50in}

\setlength{\topmargin}{-0.20in}
\setlength{\oddsidemargin}{-0.20in}
\setlength{\evensidemargin}{0.00in}

\usepackage{xr}
\externaldocument{Main-Litvinova-Silvapulle}

\newtheorem{lemma}{Lemma}
\newtheorem{proposition}{Proposition}

\def\and{\mbox{and}}

\numberwithin{equation}{section}
\renewcommand{\thefootnote}{\fnsymbol{footnote}}
\renewcommand{\thefootnote}{\fnsymbol{footnote}}
\numberwithin{theorem}{section}
\numberwithin{lemma}{section}
\numberwithin{proposition}{section}
\numberwithin{corollary}{section}
\numberwithin{remark}{section}

\def\and{\mbox{ and }}

\def\ba{\begin{array}}
\def\bc{\begin{center}}
\def\bd{\begin{description}}
\def\be{\begin{enumerate}}
\def\ec{\end{center}}
\def\ed{\end{description}}
\def\ee{\end{enumerate}}
\def\ea{\end{array}}
\def\ben{\begin{equation}}
\def\benn{\begin{equation*}}
\def\een{\end{equation}}
\def\eenn{\end{equation*}}
\def\benr{\begin{eqnarray}}
\def\eenr{\end{eqnarray}}
\def\benrr{\begin{eqnarray*}}
\def\eenrr{\end{eqnarray*}}

\def\edt{\end{document}}

\def\vs{\vskip}

 \def\cT{{\mathcal{T}}}
 \def\cV{{\mathcal{V}}}




\def\vec#1{\oalign{#1\crcr\hidewidth \vbox to .3ex{\kern 1pt
           \hbox spread .1em
           {\hss\char'176\hss}\vss}\hidewidth}}

\def\bfV{{\boldsymbol{V}}}  \def\bfX{{\boldsymbol{X}}}

\usepackage{mathtools}

\DeclarePairedDelimiter{\floor}{\lfloor}{\rfloor}

\begin{document}



\baselineskip 21pt

\begin{center}
{\Large \bf Supplementary materials for "Consistency of full-sample bootstrap tail estimators:  high-quantile, tail probability, and tail index"}\\[1cm]
{\large Svetlana Litvinova and Mervyn J. Silvapulle} \\
{\em Department of Econometrics and Business Statistics, Monash University, Australia}\\
   {svetlana.litvinova@monash.edu, mervyn.silvapulle@monash.edu}\\[0.3cm]
\end{center}

{
\renewcommand{\thefootnote}{}
\setcounter{footnote}{1}
\footnotetext{Corresponding author:
Svetlana Litvinova, Department of Econometrics and Business Statistics, Monash University, Australia.
   email: svetlana.litvinova@monash.edu.}
\setcounter{footnote}{0}
}

\renewcommand{\thefootnote}{\arabic{footnote}}

\vs .25in


\baselineskip 22pt

\smallskip

\begin{abstract}
The proofs of the results in the main paper are provided in this Supplement.
Initially, some known results are stated.
Then, several lemmas are established, and then the proofs for the main results are provided.
\end{abstract}

\noindent \textit{Keywords}: Full-sample bootstrap; Intermediate order statistic; Extreme value index; Hill estimator; Tail probability; Tail quantile .



%
%
%
%
%
%
%
%
%
%
%
%

\section{Some known results on tail quantile processes.}
\label{sec:App-1}

Let $V_1, \ldots, V_n$ be independent and identically distributed random variables with the Uniform(0,1) distribution.
Let $V_{1,n}\leq \ldots, V_{n,n}$  denote the order statistics of $V_1, \ldots, V_n$.
Let $[x]$ denote the smallest integer larger than or equal to $x$.
Let  $V_n(s) = V_{[ns],n} \quad (0 < s \leq 1)$ with $V_n(0) =V_{1,n}$ denote the
\textit{empirical quantile function}.
Define the \textit{empirical quantile process} of $\{V_1, \ldots, V_n\}$ by
$\beta_n(s)=n^{1/2}\{s-V_n(s)\} \quad (0 \leq s \leq 1)$.
A result on $\beta_n$ that we use is the following:
[\citealt{Csor:Csor:Horv:Maso:1986}, Theorem 2.1]
There exists a probability space $(\Omega, \mathcal{A}, P)$ which carries a sequence of independent uniform (0,1) random variables $\{V_i\}_{i \in \mathbb{N}}$ and a sequence of Brownian bridges
 $\{B_i\}_{i \in \mathbb{N}}$, where $\mathbb{N}$ denotes the set of positive integers, such that
\begin{equation}
\sup_{\lambda/n \leq s \leq 1-\lambda/n}  {  \left|  \beta_n(s)-B_n(s) \right|}/{\{s(1-s)\}^{1/2-\nu}} =O_P(n^{-\nu}), \ (0<\lambda<\infty; 0 \leq \nu <1/2).
\label{eq:qp}
\end{equation}
%
\cite{Csor:Maso:1989} established the following result similar to (\ref{eq:qp}) for the bootstrap version of the uniform empirical quantile process:
There exists a probability space, denoted
  $(\Omega', \mathcal{A}', P')$  that carries the aforementioned sequences $\{V_i\}_{i \in \mathbb{N}}$ and $\{B_i\}_{i \in \mathbb{N}}$, together with a sequence of independent uniform (0,1) random variables
  $\{V_i'\}_{i \in \mathbb{N}}$ and a sequence of Brownian bridges $\{B_i'\}_{i \in \mathbb{N}}$ such that the two sets of random elements
   $\{V_i\}_{i \in \mathbb{N}} \cup \{B_i(s): \ 0 \leq s \leq 1 \}_{i \in \mathbb{N}} $
   and $\{V_i'\}_{i \in \mathbb{N}}  \cup \{B_i'(s): \ 0 \leq s \leq 1 \}_{i \in \mathbb{N}} $
   are independent.
Let $\{V_{i,n}', \beta_n'(s), V_n'(s)\}$  be defined in the same way as
$\{V_{i,n}, \beta_n(s), V_n(s)\}$ except that $\{V_i\}_{i \in \mathbb{N}} $ is replaced by $\{V_i'\}_{i \in \mathbb{N}}$ ($i=1,\ldots,n; 0\leq s \leq 1$).

Let $V_i^*=V_n(V_i')$ and $V_n^*(s)=V_n\{V_n'(s)\}  \ (i=1,\ldots,n; \ 0 \leq s \leq 1).$
Then, $\{V_1^*, \ldots, V_n^*\}$ is a simple random sample from $\{V_1, \ldots, V_n\};$
in what follows, we treat $\{V_1^*, \ldots, V_n^*\}$ as a bootstrap sample from $\{V_1, \ldots, V_n\}$,
and $V_n^*(s)$  as the corresponding \textit{bootstrap empirical quantile function}.
Let $\{V_{1,n}^*, \ldots V_{n,n}^*\}$ denote the order statistics of $\{V_1^*, \ldots V_n^*\}$.
Then, we have $V_{[ks],n}^*=V_n(V_{[ks],n}')=V_n(V_{[nks/n],n}')=V_n\{V_n'(ks/n)\}= V_n^*(ks/n).$

Define the \textit{bootstrap uniform quantile process} as
$\beta_n^*(s)=n^{1/2}\{V_n(s)-V_n^*(s)\}$ ($0 \leq s \leq 1$).
\cite{Csor:Maso:1989} (see Theorem 2.1) showed that
\begin{equation}
\sup_{\lambda/n \leq s \leq 1-\lambda/n}  {  \left|  \beta_n^*(s)-B_n'(s) \right|}/{\{s(1-s)\}^{1/2-\nu}} =O_{P'}(n^{-\nu}) \quad (0<\lambda<\infty; 0 \leq \nu <1/4)
\label{eq:bqp}
\end{equation}
where the $P'$ in $O_{P'}(n^{-\nu}) $ refers to the probability in the space that
carries both $\{V_i, B_i(\cdot)\}_{i \in \mathbb{N}}$ and $\{V_i', B_i'(\cdot)\}_{i \in \mathbb{N}}$.

Let $k$ denote an \textit{intermediate order sequence}; therefore
$k \rightarrow \infty$ and $ k/n\rightarrow 0 \ \mbox{as} \ n\rightarrow \infty.$
Define the \textit{uniform tail quantile process} as
$\beta_{k,n}(s) = ({n}/{k})^{1/2} \beta_n ({ks}/{n})$
and  the \textit{uniform tail quantile  bootstrap processes} as
$\beta_{k,n}^*(s) = (n/k)^{1/2} \beta_n^* (ks/n) $ $ \ (0 < s \leq 1)$.
Then, by simple substitution, we obtain
$\beta_{k,n}(s)  = k^{1/2}\{ s - (n/k)V_{[ks],n}\}$ and
$\beta_{k,n}^*(s) = k^{1/2}\{ (n/k)V_{[ks],n} - (n/k)V_{[ks],n}^*\}$  ($0 \leq s \leq 1$).

Let $B_{k,n}(s)=(n/k)^{1/2}B_n(ks/n)$ and $B_{k,n}'(s)=(n/k)^{1/2}B_n'(ks/n)$ ($0 < s \leq 1$)
where the Brownian bridges $B_n$ and $B_n'$ are the ones introduced earlier to approximate the
quantile processes $\beta_n$ and $\beta_n'$  ($n = 1, 2, \ldots $) respectively.
Then, we have (Theorem 2.3 in  \citealt{Peng:Qi:2017})
\begin{equation}
\sup_{1 \leq sk \leq n-1} s^{-\nu} \left|  \beta_{k,n}(s)-B_{k,n}(s) \right|   =o_{P}(1)
\quad (0 \leq \nu <1/2).
\label{eq:qp_wc}
\end{equation}

Let $W$ denote a standard Wiener process on $[0,1]$.
The following result about the order of magnitude of $W$ near the origin, is important (see Remark 1 in \citealt{Csor:Maso:1989}): for $0 < \delta \leq 1/2$, we have
\begin{equation}
a^{-\delta}\sup_{0 \leq s \leq a}s^{-1/2+\delta}|B_{n}(s)|  \rightarrow 
\sup_{0 \leq s \leq 1}s^{-1/2+\delta}|W(s)|
\label{l:r_00}
\end{equation}
in distribution, as $a \downarrow 0$.
In addition, we make use of the following results on the empirical quantile function:
For $0 < \rho < \infty$, we have
(\citealt{Csor:Maso:1989},  Theorem 2.3 and  page 1454 )
\begin{eqnarray}
\label{l:r_0000}
&& \sup_{0< s < 1}{s}/{V_n(s)}, \ \sup_{\rho/n \leq s < 1}{V_n(s)}/{s}, \
\sup_{0< s < 1}{s}/{V_n^*(s)}, \
 \sup_{\rho/n \leq s < 1}{V_n^*(s)}/{s} \  = O_{P'}(1), \\
&&
\sup_{0< s \leq 1}\frac{ks}{(nV_{[ks],n})} , \
\sup_{1/k \leq s \leq 1}\frac{nV_{[ks],n}}{ks}, \
\sup_{0< s \leq 1}\frac{ks}{(nV^*_{[ks],n})}, \
\sup_{1/k \leq s \leq 1}\frac{nV^*_{[ks],n}}{ks} \ = O_{P'}(1).
\label{eq:08a}
\end{eqnarray}

\section{Preliminary lemmas to prove the theorems in the paper}

\begin{lemma}
\label{l:r_0}
For $0 \leq \nu <1/2$ and $\epsilon >0$, we have
\begin{equation}
 \sup_{0 \leq  s \leq 1}s^{-\nu}|B_{k,n}(s)|  = O_{P'}(1),
\quad \lim_{\delta \downarrow0} \liminf\limits_{n \rightarrow \infty} P'\left\{  \sup_{0 \leq  s \leq \delta}s^{-\nu}|B_{k,n}(s)| < \epsilon \right\} =1.
\label{eq:022}
\end{equation}
\end{lemma}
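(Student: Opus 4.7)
The plan is to reduce both claims to the scaling result \eqref{l:r_00} through a single change of variables. For any $a \in (0,1]$, substituting $u = ks/n$ in the identity $B_{k,n}(s) = (n/k)^{1/2}B_n(ks/n)$ and using $s^{-\nu}(n/k)^{1/2} = (n/k)^{1/2-\nu}\, u^{-\nu}$ gives
\begin{equation*}
\sup_{0 \leq s \leq a} s^{-\nu}|B_{k,n}(s)| \;=\; a^{1/2-\nu}\cdot (ka/n)^{-(1/2-\nu)}\sup_{0 \leq u \leq ka/n}u^{-\nu}|B_n(u)|.
\end{equation*}

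Now I apply \eqref{l:r_00} with exponent $1/2-\nu \in (0, 1/2]$ (which is available because $\nu < 1/2$) and with the $n$-dependent truncation $ka/n \downarrow 0$. Because the distribution of $a^{-(1/2-\nu)}\sup_{0 \leq s \leq a} s^{-\nu}|B_n(s)|$ depends on the interval length $a$ alone, the weak convergence in \eqref{l:r_00} continues to apply when $a$ varies with $n$; hence, for each fixed $a \in (0,1]$,
\begin{equation*}
\sup_{0 \leq s \leq a} s^{-\nu}|B_{k,n}(s)| \;\wcon\; a^{1/2-\nu}\, X_\infty, \qquad X_\infty := \sup_{0 \leq s \leq 1}s^{-\nu}|W(s)|.
\end{equation*}
Since $\nu < 1/2$, the standard local modulus-of-continuity of $W$ at the origin yields $X_\infty < \infty$ almost surely.

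Setting $a = 1$ gives the first claim, $\sup_{0 \leq s \leq 1}s^{-\nu}|B_{k,n}(s)| = O_{P'}(1)$. For the second claim, taking $a = \delta$ (the parameter appearing in the lemma statement) and invoking the weak convergence above,
\begin{equation*}
\liminf_{n \to \infty} P'\!\left\{\sup_{0 \leq s \leq \delta}s^{-\nu}|B_{k,n}(s)| < \epsilon\right\} \;\geq\; P'\!\left\{X_\infty < \epsilon\, \delta^{-(1/2-\nu)}\right\},
\end{equation*}
and since $1/2 - \nu > 0$, the right-hand side tends to $1$ as $\delta \downarrow 0$, which establishes the second assertion. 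I do not anticipate any serious difficulty; the single point requiring care is verifying that \eqref{l:r_00} may be used with an $n$-dependent truncation, which reduces to the observation that the rescaled supremum $a^{-(1/2-\nu)}\sup_{0 \leq s \leq a}s^{-\nu}|B_n(s)|$ depends on the truncation level only through its law.
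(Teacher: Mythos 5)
Your proof is correct and follows essentially the same route as the paper: the identical change of variables $u=ks/n$ reducing the supremum to $\delta^{\mu}a^{-\mu}\sup_{0\le t\le a}t^{-1/2+\mu}|B_n(t)|$ with $a=k\delta/n\downarrow 0$, followed by an appeal to \eqref{l:r_00}. Your extra remark that \eqref{l:r_00} can be applied with an $n$-dependent truncation because the law of the rescaled supremum depends only on the interval length is a valid (and welcome) clarification of a step the paper leaves implicit.
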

\begin{proof}
Let $\delta >0 $, $\mu= (1/2) - \nu,$ and $a = (k\delta/n).$
Then
 using the definition of $B_{k,n}$, we obtain,
 $\sup_{0 \leq    s \leq \delta}s^{-\nu}|B_{k,n}(s)|
   =  \delta^{\mu} a^{-\mu}
  \sup_{0 \leq   t \leq a} t^{-(1/2) + \mu}| B_n(t)|.$
Now, the proof follows from (\ref{l:r_00}).
\end{proof}

\begin{lemma}
\label{th:bqp_wc}
Let $0 \leq \nu <1/2$, $\mu= (1/2)-\nu$,  $0 < \lambda \leq 1$, and $0 < \delta < \min\{\mu, 1/4\}$.
Then
\begin{equation}
\sup_{\lambda/k \leq s \leq 1}  {  \left|  \beta_{k,n}^*(s)-B_{k,n}'(s) \right|}/{s^{\nu}} =O_{P'}(k^{-\delta}) = o_{P'}(1).
\label{eq:bqp_wc}
\end{equation}
\end{lemma}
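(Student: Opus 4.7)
The plan is to reduce the bootstrap tail quantile approximation to a direct application of the Csörgő–Mason result \eqref{eq:bqp} after an affine change of variable, exploiting the fact that the range $s\in[\lambda/k,1]$ for $\beta_{k,n}^*$ corresponds to an asymptotically negligible range for $\beta_n^*$.

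First, I would substitute $t=ks/n$, so that $s\in[\lambda/k,1]$ maps to $t\in[\lambda/n,k/n]$. Since $k/n\to 0$, eventually $k/n\le 1-\lambda/n$, so the image lies inside the range where \eqref{eq:bqp} is applicable. From the definitions, $\beta_{k,n}^*(s)-B_{k,n}'(s)=(n/k)^{1/2}\{\beta_n^*(t)-B_n'(t)\}$ and $s^\nu=(nt/k)^\nu$, which gives
\[
\frac{|\beta_{k,n}^*(s)-B_{k,n}'(s)|}{s^\nu} \;=\; \left(\frac{n}{k}\right)^{\!1/2-\nu}\frac{|\beta_n^*(t)-B_n'(t)|}{t^\nu}.
\]

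Next, I would apply \eqref{eq:bqp} with exponent $\delta$ (legitimate since $0<\delta<1/4$) to conclude that uniformly on $[\lambda/n,k/n]$,
\[
|\beta_n^*(t)-B_n'(t)| \;\le\; O_{P'}(n^{-\delta})\,\{t(1-t)\}^{1/2-\delta} \;\le\; O_{P'}(n^{-\delta})\,t^{\,1/2-\delta},
\]
since $1-t\le 1$ and $1/2-\delta>0$. Dividing by $t^\nu$ and taking the supremum over $t\in[\lambda/n,k/n]$, the factor becomes $t^{\,1/2-\delta-\nu}=t^{\,\mu-\delta}$, whose exponent is strictly positive by the hypothesis $\delta<\mu$. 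Hence the supremum is attained at the right endpoint $t=k/n$, giving $(k/n)^{\mu-\delta}$.

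Assembling the pieces,
\[
\sup_{\lambda/k\le s\le 1}\frac{|\beta_{k,n}^*(s)-B_{k,n}'(s)|}{s^\nu} \;\le\; O_{P'}(n^{-\delta})\left(\frac{n}{k}\right)^{\!\mu}\left(\frac{k}{n}\right)^{\!\mu-\delta} \;=\; O_{P'}\!\left(n^{-\delta}(n/k)^{\delta}\right) \;=\; O_{P'}(k^{-\delta}),
\]
and the $o_{P'}(1)$ conclusion follows immediately from $k\to\infty$ and $\delta>0$. There is no real obstacle here; the only points requiring a moment of care are (i) checking the range compatibility so that \eqref{eq:bqp} can be invoked on $[\lambda/n,k/n]$, and (ii) verifying that the exponent $\mu-\delta$ is positive so that the supremum over $s^{\mu-\delta}$ is controlled by the value at $s=1$ (equivalently $t=k/n$), which is where the telescoping cancellation $(n/k)^\mu(k/n)^{\mu-\delta}=(n/k)^\delta$ kicks in to convert the $n^{-\delta}$ rate of \eqref{eq:bqp} into the desired $k^{-\delta}$.
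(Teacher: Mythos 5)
Your proof is correct and follows essentially the same route as the paper's: the substitution $t=ks/n$, the application of \eqref{eq:bqp} with exponent $\delta<1/4$, and the cancellation $(n/k)^{\mu}(k/n)^{\mu-\delta}=(n/k)^{\delta}$ converting $n^{-\delta}$ into $k^{-\delta}$ are exactly the paper's argument, merely with the order of the two bounding steps (weight comparison versus invocation of \eqref{eq:bqp}) interchanged. No gaps.
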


\begin{proof}
For any $0<t \leq k/n$, we have $(n/k)^{\mu} t^{\mu-1/2} \leq (n/k)^{\delta} t^{\delta-1/2}$.
Then,
\begin{eqnarray*}
 &&\sup_{\lambda/k \leq s \leq 1}  {  \left|  \beta_{k,n}^*(s)-B_{k,n}'(s) \right|}/{s^{\nu}}
 = \sup_{\lambda/n \leq t \leq k/n} \left( n/k \right)^{\mu} {  \left|  \beta_{n}^*(t)-B_{n}'(t) \right|}/{t^{1/2-\mu}}\\
&\leq& \sup_{\lambda/n \leq t \leq k/n} n^{\delta} {  \left|  \beta_{n}^*(t)-B_{n}'(t) \right|}/{\{t(1-t)\}^{1/2-\delta}} k^{-\delta}  \sup_{1/n \leq t \leq k/n}(1-t)^{1/2-\delta}
= O_{P'}(k^{-\delta})= o_{P'}(1).
\end{eqnarray*}
where the ineqequality uses  (\ref{eq:bqp}).
\end{proof}


\begin{lemma}
\label{l:r_1}
Let $\epsilon >0$ be given.
Then, 
\begin{equation}
\sup_{0 < s \leq 1}  s^{1/2+\epsilon}
 \left|  k^{1/2}\big[ \log \{nk^{-1}V_{[ks],n}\} - \log \{ {nk^{-1}V_{[ks],n}^*}\} \big]
 - s^{-1}{B_{k,n}'(s)} \right|=o_{P'}(1).
\label{eq:logV_boot}
\end{equation}
\end{lemma}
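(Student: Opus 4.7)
My plan is to set $A(s) = nk^{-1}V_{[ks],n}$ and $A^*(s) = nk^{-1}V_{[ks],n}^*$, note the identities $k^{1/2}\{A(s) - A^*(s)\} = \beta_{k,n}^*(s)$ and $k^{1/2}\{s - A(s)\} = \beta_{k,n}(s)$ (so $k^{1/2}\{s - A^*(s)\} = \beta_{k,n}(s) + \beta_{k,n}^*(s)$), and split the supremum over $s \in (0,1]$ into the two ranges $s \in [1/k, 1]$ and $s \in (0, 1/k]$. Throughout I will choose $\nu \in (0, 1/2)$ with $\nu > 1/2 - \epsilon/2$ (concretely $\nu = 1/2 - \epsilon/3$ for small $\epsilon$) and some $\delta \in (0, \min\{1/2 - \nu, 1/4\})$, so that both Lemma \ref{th:bqp_wc} with $\lambda = 1$ and Lemma \ref{l:r_0} become usable with this $\nu$.

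For $s \in [1/k, 1]$, the mean value theorem applied to the logarithm supplies $\xi(s)$ between $A(s)$ and $A^*(s)$ with $k^{1/2}\{\log A(s) - \log A^*(s)\} = \beta_{k,n}^*(s)/\xi(s)$, and the quantity inside the supremum decomposes as $s^{-1}\{\beta_{k,n}^*(s) - B_{k,n}'(s)\} + \beta_{k,n}^*(s)\{s - \xi(s)\}/\{s\,\xi(s)\}$. For the first summand, Lemma \ref{th:bqp_wc} gives $|\beta_{k,n}^*(s) - B_{k,n}'(s)| = O_{P'}(k^{-\delta}s^{\nu})$ uniformly, and multiplication by $s^{1/2+\epsilon}s^{-1} = s^{-1/2+\epsilon}$ produces a uniform bound of $O_{P'}(k^{-\delta})$ because $\nu + \epsilon > 1/2$. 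For the second summand I use \er{eq:08a} to obtain $\xi(s) \geq c\,s$ with probability tending to one, and combine \er{eq:qp_wc}, Lemma \ref{l:r_0}, and Lemma \ref{th:bqp_wc} to get $|\beta_{k,n}(s)|, |\beta_{k,n}^*(s)| = O_{P'}(s^{\nu})$ uniformly on $[1/k, 1]$, whence $|s - \xi(s)| \leq |s - A(s)| + |A(s) - A^*(s)| = O_{P'}(k^{-1/2}s^{\nu})$; assembling these estimates, the second summand multiplied by $s^{1/2+\epsilon}$ is at most $O_{P'}(k^{-1/2}s^{2\nu - 3/2 + \epsilon})$, which on $s \geq 1/k$ is bounded by $O_{P'}(k^{1 - 2\nu - \epsilon}) = o_{P'}(1)$ by $\nu > (1-\epsilon)/2$.

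For $s \in (0, 1/k]$, one has $[ks] = 1$, so $V_{[ks],n} = V_{1,n}$ and $V_{[ks],n}^* = V_{1,n}^*$; consequently the logarithmic difference reduces to the $s$-free quantity $k^{1/2}\{\log(nV_{1,n}) - \log(nV_{1,n}^*)\}$. From \er{l:r_0000} evaluated near $s = 1/n$, both $nV_{1,n}$ and $nV_{1,n}^*$ are bounded in probability both above and below, hence their logarithms are $O_{P'}(1)$; multiplying by $s^{1/2+\epsilon}k^{1/2} \leq k^{-\epsilon}$ yields an $O_{P'}(k^{-\epsilon})$ contribution uniformly in $s \in (0, 1/k]$. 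The remaining piece $\sup_{0 < s \leq 1/k} s^{-1/2+\epsilon}|B_{k,n}'(s)|$ is $o_{P'}(1)$ by the second conclusion of Lemma \ref{l:r_0} (applied with $\nu = 1/2 - \epsilon$), since $1/k \downarrow 0$.

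The main obstacle is the mean value linearization: $\xi(s)$ is only of order $s$, so the prefactor $s^{1/2+\epsilon}$ on its own cannot absorb the singularity that arises when we divide by $\xi(s)$ near the origin. The decisive maneuver is to control $|s - \xi(s)|$ by $k^{-1/2}s^{\nu}$ with $\nu$ close enough to $1/2$ that the net exponent of $s$ in the error stays larger than $-1$; then the extra factor $k^{-1/2}$ combined with the truncation $s \geq 1/k$ converts the remaining singularity into a polynomial gain in $k$. Combining the quantile approximations in Lemma \ref{l:r_0}, \er{eq:qp_wc}, and Lemma \ref{th:bqp_wc} with the uniform ratio bounds in \er{eq:08a} is the crux of the argument.
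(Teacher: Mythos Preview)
Your proposal is correct and follows essentially the same route as the paper: split the supremum at $s=1/k$, apply the mean value theorem on $[1/k,1]$, and invoke the ratio bounds \er{eq:08a} together with Lemmas \ref{l:r_0} and \ref{th:bqp_wc}. The only notable differences are in execution. On $[1/k,1]$ the paper decomposes as $\xi_n(s)^{-1}\{\beta_{k,n}^*(s)-B_{k,n}'(s)\} + B_{k,n}'(s)\{s-\xi_n(s)\}/(s\,\xi_n(s))$ and handles the second term by splitting the range at an auxiliary $\delta$ (cf.\ the analogous $B_1B_2+B_3B_4$ argument in Lemma \ref{l:r_2}); you instead keep $\beta_{k,n}^*(s)$ in the second term and control it by the explicit power estimate $|s-\xi(s)|=O_{P'}(k^{-1/2}s^{\nu})$ with a carefully chosen $\nu$, which is a bit more direct. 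On $(0,1/k]$ the paper substitutes $s=1/k$ in the already-proved bound and then appeals to \er{eq:bqp_wc}, whereas you observe that $[ks]=1$ makes the log difference constant and bound it and $s^{-1/2+\epsilon}|B_{k,n}'(s)|$ separately; your version is arguably cleaner. Two small points to make explicit in a write-up: Lemma \ref{l:r_0} is stated for $B_{k,n}$ but applies verbatim to $B_{k,n}'$, and without loss of generality one may take $\epsilon<1/2$ so that the choice $\nu=1/2-\epsilon/3$ lies in $(0,1/2)$.
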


\begin{proof}
We consider the supremum over $[1/k,1]$ and that over $(0, 1/k]$ separately.
First, we consider the former; therefore, let  $s \in [1/k, 1]$.
Let $\xi_l(s)$ and $\xi_u(s)$ denote the minimum and maximum of
$\{(n/k)V_{[ks],n}^*, (n/k)V_{[ks],n}\}$.
By the mean value theorem 
we have
\begin{equation}
k^{1/2} \big[ \log \{nk^{-1}V_{[ks],n}\} - \log \{ {nk^{-1}V_{[ks],n}^*}\}  \big]=
-{\xi_n}^{-1} k^{1/2}\left( nk^{-1}V_{[ks],n}  -  {nk^{-1}V_{[ks],n}^*}\right)= \xi_n^{-1}(s)\beta_{k,n}^*(s),
\notag
\end{equation}
where $\xi_n(s) \in [\xi_l(s),\xi_u(s)]$.
Then,  
$$\sup_{1/k \leq s  \leq 1}  s^{1/2+\epsilon}
 \left|  k^{1/2}\big[ \log \{nk^{-1}V_{[ks],n}\} - \log \{ {nk^{-1}V_{[ks],n}^*}\} \big]
 - s^{-1}{B_{k,n}'(s)} \right| \leq A_1 + A_2,$$
\begin{equation}
\label{eq-A1A2}
A_1 =   \sup_{1/k \leq s \leq 1}  \frac{s}{\xi_n(s)} \sup_{1/k \leq s \leq 1} \frac{ \left| \beta_{k,n}^*(s) - B_{k,n}'(s)\right|}{s^{1/2-\epsilon}},
\quad
A_2 = \sup_{1/k \leq s \leq 1} \frac{B_{k,n}'(s)}{s^{1/2-\epsilon} }   \frac{|\xi_n(s)-s|}{\xi_n(s)}.
\end{equation}
It may be verified using (\ref{l:r_00}), (\ref{l:r_0000}),
(\ref{eq:08a}), and Lemmas \ref{l:r_0} and \ref{th:bqp_wc},  that
$A_1 = o_{P'}(1)$ and $A_2 = o_{P'}(1).$
Next consider the proof for the supremum over
$0 < s < k^{-1}.$
First, substituting $s=1/k$ in the foregoing result for the supremum over $k^{-1} \leq s \leq 1$
we have
\begin{equation}
(1/k)^{1/2+\epsilon}  \left| k^{1/2}\big[ \log \{nk^{-1}V_{[ks],n}\} - \log \{ {nk^{-1}V_{[ks],n}^*}\} \big]
 - k{B_{k,n}'(1/k)} \right| = o_{P'}(1).
 \label{eq-5.3(a)}
\end{equation}
Then, the proof of the Lemma for the
interval $0 < s < 1/k$ follows  by (\ref{eq:bqp_wc}).
 %
\end{proof}


\begin{lemma}
\label{l:r_2}
Let $\epsilon>0$ be given.
Then,
 we have
$\sup_{\lambda/k \leq s \leq 1}  s^{\gamma +(1/2)+\epsilon}
 \left| L_n(s) - {B_{k,n}'(s)}/{s^{\gamma +1}} \right| =o_{P'}(1),$
 for $\gamma \in \mathbb{R}$ and $0 < \lambda  \leq 1$, where
$$L_n(s) = k^{1/2} \gamma^{-1}  \left\{   \left( {nk^{-1}V_{[ks],n}^*}\right)^{-\gamma} -1 \right\} -
  k^{1/2} {\gamma}^{-1} \left\{   \left( {nk^{-1}V_{[ks],n}}\right)^{-\gamma} -1 \right\}$$
%
and the function $\gamma^{-1}(s^{-\gamma}-1)$ is interpreted to be equal to $-\log s$ when $\gamma=0$.
\end{lemma}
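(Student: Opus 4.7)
The plan is to reduce the claim to the mean value theorem followed by the same splitting and bounds used in the proof of Lemma \ref{l:r_1}. The case $\gamma = 0$ coincides with Lemma \ref{l:r_1}, since $\gamma^{-1}(x^{-\gamma}-1) \to -\log x$, so I shall assume $\gamma \neq 0$. Applying the mean value theorem to $f(x) = \gamma^{-1}(x^{-\gamma}-1)$, with $f'(x) = -x^{-\gamma-1}$, at $x_1 = nk^{-1}V_{[ks],n}^*$ and $x_2 = nk^{-1}V_{[ks],n}$, and using $k^{1/2}(x_1-x_2) = -\beta_{k,n}^*(s)$, gives $L_n(s) = \xi_n(s)^{-\gamma-1}\beta_{k,n}^*(s)$, where $\xi_n(s)$ lies between $x_1$ and $x_2$.

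Next, I would write
\[ L_n(s) - \frac{B_{k,n}'(s)}{s^{\gamma+1}} = \xi_n^{-\gamma-1}\bigl[\beta_{k,n}^*(s)-B_{k,n}'(s)\bigr] + \bigl[\xi_n^{-\gamma-1}-s^{-\gamma-1}\bigr]B_{k,n}'(s), \]
multiply by $s^{\gamma+1/2+\epsilon}$, and bound the two pieces by $A_1(s) + A_2(s)$ with $A_1(s) = (s/\xi_n)^{\gamma+1}\cdot s^{-1/2+\epsilon}|\beta_{k,n}^*(s)-B_{k,n}'(s)|$ and $A_2(s) = s^{-1/2+\epsilon}|B_{k,n}'(s)|\cdot|(s/\xi_n)^{\gamma+1}-1|$. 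For $\sup_{1/k \leq s \leq 1}$: in $A_1$, equation (\ref{eq:08a}) gives $(s/\xi_n)^{\gamma+1} = O_{P'}(1)$ for either sign of $\gamma+1$ because both $s/\xi_n$ and $\xi_n/s$ are $O_{P'}(1)$, and Lemma \ref{th:bqp_wc} with $\nu = 1/2-\epsilon$ makes the remaining factor $o_{P'}(1)$. For $A_2$ I would copy the $A_2$-argument from the proof of Lemma \ref{l:r_1}: fix a small $\delta > 0$ and split at $\delta$. On $[\delta,1]$, $\sup_s|\xi_n(s)-s| = O_{P'}(k^{-1/2})$ since $\sup|\beta_{k,n}|$ and $\sup|\beta_{k,n}^*|$ are both $O_{P'}(1)$, so $(s/\xi_n)^{\gamma+1}-1 = o_{P'}(1)$, while $\sup s^{-1/2+\epsilon}|B_{k,n}'(s)| = O_{P'}(1)$. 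On $[1/k,\delta]$, $(s/\xi_n)^{\gamma+1}-1$ is only $O_{P'}(1)$, but by Lemma \ref{l:r_0} the complementary factor $\sup_{0 \leq s \leq \delta} s^{-1/2+\epsilon}|B_{k,n}'(s)|$ can be made arbitrarily small in probability by taking $\delta$ small.

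Finally, I would extend the supremum from $[1/k,1]$ to $[\lambda/k,1]$ exactly as at the end of the proof of Lemma \ref{l:r_1}. For $s \in (0,1/k]$, $V_{[ks],n}$ and $V_{[ks],n}^*$ collapse to $V_{1,n}$ and $V_{1,n}^*$ (independent of $s$), so $L_n(s)$ is constant in $s$ on this range; the endpoint bound just established at $s = 1/k$ controls $s^{\gamma+1/2+\epsilon}|L_n(s)|$, and the remainder $s^{\gamma+1/2+\epsilon}|B_{k,n}'(s)|/s^{\gamma+1} = s^{-1/2+\epsilon}|B_{k,n}'(s)|$ is handled uniformly via the same near-origin estimate supplied by (\ref{l:r_00}) and Lemma \ref{l:r_0}. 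The main technical obstacle throughout is the term $A_2$: its factor $(s/\xi_n(s))^{\gamma+1}-1$ does not become uniformly small down to $s = 1/k$, and the argument depends essentially on absorbing its boundedness into the near-origin smallness of $B_{k,n}'$, mirroring the corresponding mechanism in Lemma \ref{l:r_1}.
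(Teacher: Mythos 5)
Your proposal is correct and follows essentially the same route as the paper's proof: the mean value theorem applied to $\varphi(t)=\gamma^{-1}(t^{-\gamma}-1)$ to get $L_n(s)=\xi_n(s)^{-\gamma-1}\beta_{k,n}^*(s)$, the same $A_1+A_2$ decomposition with (\ref{eq:08a}) and Lemma \ref{th:bqp_wc} handling $A_1$, the same $\delta$-splitting of $A_2$ using the near-origin smallness of $B_{k,n}'$ from Lemma \ref{l:r_0}, and the same endpoint argument for $s\in[\lambda/k,1/k]$.
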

\begin{proof}
For $\gamma=0$ the result is established in Lemma \ref{l:r_1}.
For $\gamma \neq 0$, we consider the suprema over $\lambda/k \leq s \leq 1/k $ and $ 1/k \leq s \leq 1 $ separately.
First, let us consider
$ 1/k \leq s \leq 1 $ with $\gamma \neq 0.$
Let $\xi_l(s)$ and $\xi_u(s)$ denote the minimum and maximum of
$\{(n/k)V_{[ks],n}^*, (n/k)V_{[ks],n}\}$
 ($1/k \leq s\leq1$).
Let $\varphi(t)= \gamma^{-1}({t^{-\gamma}-1})$.
Applying the mean value theorem to $\varphi(t)$ and noting that
$\dot{\varphi}(t)=-t^{-\gamma-1}$, we obtain
$L_n(s) =  \xi_n^{-\gamma-1}(s)\beta_{k,n}^*(s),$
%
where $\xi_n(s) \in [\xi_l(s),\xi_u(s)]$.
Then,
\begin{flalign*}
&\sup_{1/k \leq s \leq 1}  s^{\gamma +(1/2)+\epsilon}
 \left|L_n(s)
  - {B_{k,n}'(s)}/{s^{\gamma +1}} \right| \leq A_1 +A_2
\end{flalign*}
$$A_1 = \sup_{1/k \leq s \leq 1}  \left(\frac{ s} {\xi_n(s)}\right)^{\gamma +1} \sup_{1/k \leq s \leq 1} \frac{ \left| \beta_{k,n}^*(s) - B_{k,n}'(s)\right|}{s^{1/2-\epsilon}},$$
$$A_2 =
\sup_{1/k \leq s \leq 1} \frac{|B_{k,n}'(s)|}{s^{1/2-\epsilon} }
   \left| 1 - \{{s}/{\xi_n(s)}\}^{\gamma+1}\right|.$$
By (\ref{eq:08a}) and (\ref{eq:bqp_wc}),  $A_1=o_{P'}(1)$.
Next,
let $\delta \in (0,1).$  Choose $k > \delta^{-1}$.
Then,
\begin{flalign*}
 A_2
& \leq \sup_{0 < s \leq \delta} \frac{|B_{k,n}'(s)|}{s^{1/2-\epsilon} }
\sup_{1/k \leq s \leq 1}   \left| 1 - \{\frac{s}{\xi_n(s)}\}^{\gamma +1}\right|
 +  \sup_{0< s \leq 1} \frac{|B_{k,n}'(s)|}{s^{1/2-\epsilon} }
     \sup_{\delta < s \leq 1}   \left| 1 - \{\frac{s}{\xi_n(s)}\}^{\gamma +1}\right|  \\
& \leq B_1(n,k,\delta)B_2(n,k) + B_3(n,k)B_4(n,k, \delta), \qquad \mbox{say.}
\end{flalign*}
By using arguments similar to those for Lemma \ref{l:r_1}, it may be verified that $B_1(n,k,\delta)$ can be made arbitrarily small for large $\{n,k\}$ and small $\delta.$
Further, $B_2(n,k) = O_{p'}(1),$ and $B_4(n,k,\delta) = o_{P'}(1).$
Therefore, $A_2 = o_{P'}(1)$ and hence the proof follows for the range
$k^{-1} \leq s \leq 1.$
The proof for the supremum over $s \in [\lambda/k, 1/k]$ follows as in the proof of the previous Lemma.
This completes the proof of the Lemma.
\end{proof}

\section{Proofs of Theorems}
Let $V_1, \ldots, V_n$ be independent and identically distributed as Uniform(0,1) random variables,
 and $W_i=1-V_i, \ (i=1, \ldots n)$.
Let $X_i=F^{-1}(W_i), \ (i=1, \ldots n)$.
Then, $X_1, \ldots, X_n$ are independent and identically distributed random variables from the distribution  function $F$, and
$X_i = F^{-1}(1-V_i)$   ($i=1, \ldots n$).
Let $\{V_i'\}_{i=1}^n$ and $\{V_i^*\}_{i=1}^n$ be defined as at the beginning of Section \ref{sec:App-1}, more specifically as in
\cite{Csor:Maso:1989}.
Let
$X_i^*= F^{\leftarrow}(1-V_i^*)$ ($i=1, \ldots n$).
Since $\{V_1^*, \ldots, V_n^*\}\subseteq \{V_1,  \ldots, V_n\}$, it follows that $\{X_1^*, \ldots, X_n^*\}\subseteq \{X_1,  \ldots, X_n\}$.
Let $\{X_{1,n} \leq \cdots \leq X_{n,n}\}$ and $\{X_{1,n}^* \leq \cdots \leq X_{n,n}^*\}$ denote order statistics of $\{X_1,  \ldots, X_n\}$ and $\{X_1^*, \ldots, X_n^*\}$, respectively.
Then, since $U(x) = F^{\leftarrow}(1-x^{-1})$,
we have $X_{n-[ks]+1,n}=F^{\leftarrow}(1-V_{[ks],n}) =  U(1/V_{[ks],n})$ and
$X_{n-[ks]+1,n}^*=F^{\leftarrow}(1-V_{[ks],n}^*) =  U(1/V_{[ks],n}^*) \  (0 < ks \leq n)$.
Let $\cV_s = k/(n V_{[ks],n})$ and $\cV_s^* = k/(n V_{[ks],n}^*)$;
$\cV_s$ and $\cV_s^*$ also depend on $(k,n)$ but not shown explicitly.

\subsection{Proofs of main results in terms of joint probability $P'$}

Let $0< \tau <1$ be given, and let $\cT$ denote the interval
$\tau/k < s <  1 + (\tau/k).$
Throughout this section, suprema of functions of the real variable $s$ over the interval
$\cT(k, \tau)$  arises frequently.
To simplify notation we write $\sup_{s \in\cT} G(s)$ for  $\sup_{\tau/k < s < 1 + (\tau/k)} G(s)$
for any function $G(s)$.
Typically, the main results are proved for $\tau/k < s \leq 1$; in general, the proof could be extended to $\tau/k < s < 1 + (\tau/k)$ without much difficulty.
The next result is a bootstrap version of a result from \cite{Dree:1998} (see also
\citealt{Ha:Ferr:2006}, Theorem 2.4.2).
\begin{lemma}
\label{l:r_3}
Suppose that the conditions of Theorem \ref{th:mm_b_boot} are satisfied.
Let $\gamma$ denote the tail index of $F$ ($\gamma \in \mathbb{R}$).
Let $\epsilon>0$ and $0< \tau < 1$ be given.
Then, for suitably chosen function $a_0(\cdot)$, we have
\begin{equation}
\sup_{s \in \cT}  s^{\gamma+ (1/2)+\epsilon} \left| k^{1/2}
 \left\{ X_{n-[ks]+1,n}^*-X_{n-[ks]+1,n}  \right\}/{a_0({n}/{k})}   -  {s^{-\gamma-1}{B_{k,n}'(s)}}  \right| =  o_{P'}(1).
\label{6.3-a}
\end{equation}
\end{lemma}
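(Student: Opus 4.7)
The plan is to reduce the statement to Lemma \ref{l:r_2} via the second-order extended regular variation (SOERV) condition on $U$ that is part of the hypotheses of Theorem \ref{th:mm_b_boot}. Using the identities $X_{n-[ks]+1,n} = U((n/k)\cV_s)$ and $X_{n-[ks]+1,n}^* = U((n/k)\cV_s^*)$ recorded just before the statement, I would write
\begin{equation*}
\frac{X_{n-[ks]+1,n}^* - X_{n-[ks]+1,n}}{a_0(n/k)} = \Delta_n(\cV_s^*) - \Delta_n(\cV_s),
\qquad \Delta_n(x) := \frac{U((n/k)x) - U(n/k)}{a_0(n/k)}.
\end{equation*}
Under the SOERV condition, with a suitable choice of $a_0$, there exist a second-order auxiliary function $A$ satisfying $k^{1/2}A(n/k)\to 0$ and a Drees-type uniform inequality of the form $\Delta_n(x) = \gamma^{-1}(x^{\gamma}-1) + A(n/k)R_n(x)$, in which for any preselected small $\delta > 0$ the remainder obeys $|R_n(x)| \lesssim x^{\gamma+\rho+\delta} \vee x^{\gamma+\rho-\delta}$ on the relevant range of $x$ for all sufficiently large $n$.

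Multiplying by $k^{1/2}$ and taking the difference in $\Delta_n$, the leading term becomes
$k^{1/2}\gamma^{-1}\bigl\{(\cV_s^*)^{\gamma} - (\cV_s)^{\gamma}\bigr\}$, which is precisely $L_n(s)$ of Lemma \ref{l:r_2} because $\cV_s = (nk^{-1}V_{[ks],n})^{-1}$. Lemma \ref{l:r_2} then delivers the required uniform approximation of this leading piece by $s^{-\gamma-1}B_{k,n}'(s)$ at rate $o_{P'}(1)$ after weighting by $s^{\gamma+(1/2)+\epsilon}$.

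What remains is to dominate the SOERV remainder $k^{1/2}A(n/k)\{R_n(\cV_s^*) - R_n(\cV_s)\}$ by $o_{P'}(1)$ uniformly on $\cT$ with the same weight. The factor $k^{1/2}|A(n/k)|$ tends to $0$ by assumption, so everything reduces to a uniform upper bound on the weighted $R_n$ envelope. By (\ref{eq:08a}), both $\cV_s$ and $\cV_s^*$ are of order $s^{-1}$ (times an $O_{P'}(1)$ factor) uniformly over $s \geq \tau/k$, so $s^{\gamma+(1/2)+\epsilon}\cV_s^{\gamma+\rho\pm\delta}$ collapses to $s^{(1/2)+\epsilon-\rho\mp\delta}O_{P'}(1)$, which stays bounded on $\cT$ provided $\delta$ is chosen small enough relative to $\epsilon$ and $|\rho|$.

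The main obstacle is the uniform control of the remainder near the lower endpoint $s \sim \tau/k$, where $\cV_s$ and $\cV_s^*$ are largest; this is exactly what the Drees--Potter-type bound on $R_n$, combined with the sharp ratio control in (\ref{eq:08a}), is designed to handle. The slight extension of $s$ beyond $1$ up to $1 + \tau/k$ is standard, relies only on continuity across $s = 1$ (over which $[ks]$ changes by at most one), and can be disposed of exactly as at the end of the proof of Lemma \ref{l:r_1}.
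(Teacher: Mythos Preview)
Your proposal is correct and follows essentially the same route as the paper: apply the Drees-type second-order inequality for $U$ (Theorem~2.3.6 in \cite{Ha:Ferr:2006}) at $x=\cV_s$ and $x=\cV_s^*$, subtract, identify the leading difference with the $L_n(s)$ of Lemma~\ref{l:r_2}, and kill the remainder via $k^{1/2}A(n/k)\to 0$ together with the ratio bounds~(\ref{eq:08a}). The one place the paper is more careful is that it separates the remainder into the explicit $\Psi_{\gamma,\rho}$ piece (its $D_3$) and the Drees error envelope (its $D_4$), handling $D_3$ by the mean value theorem; your envelope $|R_n(x)|\lesssim x^{\gamma+\rho\pm\delta}$ does not literally absorb $\Psi_{\gamma,\rho}$ when $\rho<0$ (since $\Psi_{\gamma,\rho}$ carries an $x^{\gamma}$ term), but the weighted supremum is still $O_{P'}(1)$ by the same $\cV_s\asymp s^{-1}$ reasoning you give, so the argument goes through unchanged.
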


\begin{proof}
Let
$\Lambda_n(s; \gamma, \rho,\delta)  =
\max\left\{\cV_s^{*(\gamma+\rho+\delta)}, \cV_s^{*(\gamma+\rho-\delta)}\right\}
 +
\max\left\{\cV_s^{\gamma+\rho+\delta}, \cV_s^{\gamma+\rho-\delta}\right\}$.
Let $\epsilon>0$ and  $ \delta>0$ be given. 
We apply inequality (2.3.17) in Theorem 2.3.6 of \cite{Ha:Ferr:2006}
with  $t := t_n=n/k$ and $x:= x_n(s) = k/(nV_{[ks],n})$ ($s \in \cT$).
Since  $\sup_{\cT} V_{[ks],n} \leq V_{k+1,n}= o_{P'}(1)$, we have that  $tx \geq \inf_{\cT} t_n x_n(s) \rightarrow \infty $, in probability[$P']$.
Hence, for any fixed $t_0$,  $t_n x_n(s)>t_0$ with probability approaching 1, uniformly on
 $s \in \cT$.
For the rest of the proof of this theorem, the inequalities are implicitly assumed to hold uniformly over $s \in \cT$, with probability approaching 1 as $n \rightarrow \infty,$ without further comment.

Let $a_0(t), A_0(t),A_*(t),$ and $a_*(t)$ be as in  Corollary 2.3.5 and Theorem 2.3.6 in \cite{Ha:Ferr:2006}.
Invoking the latter theorem, we
we obtain
\begin{equation}
 \left|\frac{U(1/V_{[ks],n})-U({n}/{k})}{a_0({n}/{k})} -
          \frac{\cV_s^{\gamma}-1}{\gamma}
          -  A_0\left({n}/{k}\right)  \Psi_{\gamma,\rho}(\cV_s) \right|
\leq    \varepsilon \left|A_0\left({n}/{k}\right)\right| \Lambda_n(s; \gamma, \rho,\delta).
\label{eq:3}
\end{equation}
Similarly, for the bootstrap sample, 
with $x_n(s)=k/(nV_{[ks],n}^*)$, we have
\begin{equation}
 \left|\frac{U(1/V_{[ks],n}^*)-U({n}/{k})}{a_0({n}/{k})}
- \frac{\left(\cV_s^*\right)^{\gamma}-1}{\gamma}  - A_0\left({n}/{k}\right)
    \Psi_{\gamma,\rho}\left(\cV_s^*\right) \right|
    \leq  \varepsilon  \left|A_0\left({n}/{k}\right)\right| \Lambda_n(s; \gamma, \rho,\delta).
\label{eq:4}
\end{equation}
Let
\begin{eqnarray*}
D_0(s) &=& s^{\gamma+1/2+\epsilon} \frac{B_{k,n}'(s)}{s^{\gamma}+1}, \quad
D_1(s) = s^{\gamma+1/2+\epsilon}k^{1/2}
\frac{X_{n-[ks]+1,n}^*-X_{n-[ks]+1,n}}{a_0({n}/{k})}\\
 D_2(s) &=& s^{\gamma+1/2+\epsilon}k^{1/2}
 \left\{ ({\cV_s^{*\gamma}-1})/{\gamma}-  ({\cV_s^{\gamma}-1})/{\gamma}\right\} \\
 D_3(s) &=& s^{\gamma+1/2+\epsilon}k^{1/2} A_0\left({n}/{k}\right)\left\{\Psi_{\gamma,\rho}\left(\cV_s^*\right)- \Psi_{\gamma,\rho}\left(\cV_s\right) \right\}\\
 D_4(s) &=& \varepsilon  k^{1/2}\left|A_0\left({n}/{k}\right)\right|
  s^{\gamma+(1/2)+\epsilon} \Lambda_n(s; \gamma, \rho,\delta).
\end{eqnarray*}

Subtracting (\ref{eq:3}) from (\ref{eq:4}), and multiplying by $k^{1/2}s^{\gamma+(1/2)+\epsilon}$,
we obtain,
$$|D_1(s) - D_2(s) - D_3(s) | \leq D_4(s) \quad  (s \in \cT).$$
By Lemma \ref{l:r_2},
for any $\gamma \in \mathbb{R}$, we have
$\sup_{s \in \cT} |D_2(s) - D_0(s)| = o_{P'}(1).$
%
Substituting  $\gamma=1$ in this inequality,
 we obtain $\sup_{s \in \cT}  s^{(3/2)+\epsilon}   k^{1/2} \left| \cV_s^* - \cV_s \right| =O_{P'}(1).$
By the mean value theorem,
$\Psi_{\gamma,\rho}(\cV_s^*)- \Psi_{\gamma,\rho}(\cV_s) = \dot{\Psi}_{\gamma,\rho}(\zeta_n(s))(\cV_s^*-\cV_s)$
where $\zeta_n(s)$ lies between $\cV_s^*$ and $\cV_s$.
By (\ref{eq:08a}), we have
\begin{eqnarray}
\sup_{s \in \cT} \ s\zeta_n(s) =  O_{P'}(1), \quad
\sup_{s \in \cT} \ \{ s\zeta_n(s)\}^{-1} =  O_{P'}(1),
\quad
\sup_{s \in \cT}  \ \log\{ s\zeta_n(s)\} = O_{P'}(1).
\label{eq-87d}
\end{eqnarray}
Now, using the functional form of $\Psi_{\gamma,\rho}(t)$ in (2.3.16) of \cite{Ha:Ferr:2006} on page 46,
 it may be verified that
 $\sup_{s \in \cT} | D_3(s)| = o_{P'}(1).$
Choose $\delta>0$ such that $\delta<1/2+\epsilon -\rho$.
By
(\ref{eq:08a}),
$\sup_{s \in \cT} (ks/(nV_{[ks],n}))^{\gamma+\rho\pm\delta} = O_{P'}(1)$.
Then,
\begin{equation}
\sup_{s \in \cT} s^{\gamma+1/2+\epsilon}
 \cV_s^{\gamma+\rho\pm\delta}
   = O_{P'}(1), \quad
 \sup_{s \in \cT} s^{\gamma+1/2+\epsilon}
   \cV_s^{*(\gamma+\rho\pm\delta)} = O_{P'}(1).
\label{eq:l54}
\end{equation}
By  (\ref{eq:l54}) and   $k^{1/2}A(n/k)=o(1)$, we obtain
$\sup_{s \in \cT} |D_4(s)| = o_{P'}(1).$
We have shown that
\newline $|D_1(s) - D_2(s) - D_3(s) | \leq D_4(s)$,
$\sup_{s \in \cT} |D_2(s) - D_0(s)| = o_{P'}(1)$,
$\sup_{s \in \cT} |D_3(s) = o_{P'}(1), $ and
 $\sup_{s \in \cT} |D_4(s)| = o_{P'}(1).$
Therefore, $\sup_{s \in \cT}  |D_1(s) - D_0(s)| = o_{P'}(1)$.
\end{proof}




%
The proof of the next lemma follows from the previous lemma,
(\ref{6.3-a}), and (\ref{eq:022}); therefore the proof is omitted.

\begin{lemma}
\label{l:r_4}
Suppose that the conditions of Lemma \ref{l:r_3} are satisfied,
and let $b_0(n) =  (X_{n,n}^* - X_{n,n}) I( \gamma<-1/2),$
where $I$ denotes the indicator function.
 Then, for $\epsilon >0$ and $0 < \tau  < 1$ we have
\begin{equation}
\sup_{0 < s \leq 1+\tau/k}  s^{\gamma+ (1/2)+\epsilon} \left| k^{1/2} \frac{X_{n-[ks]+1,n}^*-X_{n-[ks]+1,n}  - b_0(n)}{a_0(\frac{n}{k})}   -  \frac{B_{k,n}'(s)}{s^{\gamma+1}}  \right| =  o_{P'}(1).
\label{eq-N-6.13}
\end{equation}
\end{lemma}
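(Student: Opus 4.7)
The plan is to split the supremum in (\ref{eq-N-6.13}) into two pieces, $\cT=(\tau/k,\,1+\tau/k]$ where Lemma~\ref{l:r_3} applies, and $(0,\tau/k]$ where $\tau<1$ forces $[ks]=1$ so the order statistics reduce to $X_{n,n}$ and $X_{n,n}^*$. Since on the bounded range $(0,\,1+\tau/k]$ the factor $s^{\gamma+1/2+\epsilon}$ is monotone in $\epsilon$ up to a multiplicative constant, I may assume without loss that $\epsilon$ is as small as needed; in particular, for $\gamma<-1/2$ I will take $\epsilon<-\gamma-1/2$ so that $\gamma+1/2+\epsilon<0$.

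On $\cT$, Lemma~\ref{l:r_3} already delivers the approximation for the quantity with $b_0(n)$ removed, so the residual task is to bound $\sup_{s\in\cT} s^{\gamma+1/2+\epsilon}\,k^{1/2}|b_0(n)|/a_0(n/k)$. This is immediate when $\gamma\geq-1/2$ because $b_0(n)=0$. When $\gamma<-1/2$, the assumption $\gamma+1/2+\epsilon<0$ makes the supremum attained as $s\downarrow\tau/k$, and there $[ks]=1$ (since $\tau<1$), so $X_{n-[ks]+1,n}^*-X_{n-[ks]+1,n}=X_{n,n}^*-X_{n,n}=b_0(n)$. Applying Lemma~\ref{l:r_3} at $s=\tau/k$ yields
\[
(\tau/k)^{\gamma+1/2+\epsilon}\,k^{1/2}|X_{n,n}^*-X_{n,n}|/a_0(n/k)\;\leq\;(\tau/k)^{-1/2+\epsilon}|B_{k,n}'(\tau/k)|+o_{P'}(1),
\]
and the right side is $o_{P'}(1)$ by (\ref{eq:022}) with $\nu=1/2-\epsilon$, since $\tau/k\downarrow 0$.

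For $s\in(0,\tau/k]$, the identity $[ks]=1$ makes $X_{n-[ks]+1,n}^*-X_{n-[ks]+1,n}=X_{n,n}^*-X_{n,n}$ constant in $s$. When $\gamma<-1/2$ this constant is exactly $b_0(n)$ and cancels, leaving only $s^{\gamma+1/2+\epsilon}|B_{k,n}'(s)|/s^{\gamma+1}=s^{-1/2+\epsilon}|B_{k,n}'(s)|$, whose supremum on $[0,\tau/k]$ is $o_{P'}(1)$ by (\ref{eq:022}). When $\gamma\geq-1/2$, $b_0(n)=0$ and $\gamma+1/2+\epsilon>0$, so $s^{\gamma+1/2+\epsilon}\leq(\tau/k)^{\gamma+1/2+\epsilon}$ on $(0,\tau/k]$; the first term is then controlled by the same $s=\tau/k$ evaluation of Lemma~\ref{l:r_3} as above, and the Brownian bridge term again by (\ref{eq:022}).

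The main technical subtlety is converting the ``$\lim_{\delta\downarrow 0}\liminf_n$'' statement in (\ref{eq:022}) into the genuine convergence $\sup_{0\leq s\leq\tau/k} s^{-1/2+\epsilon}|B_{k,n}'(s)|=o_{P'}(1)$. A standard $(\epsilon_1,\epsilon_2)$-argument suffices: for any $\epsilon_1,\epsilon_2>0$, pick $\delta$ so that $\liminf_n P'\{\sup_{0\leq s\leq\delta} s^{-1/2+\epsilon}|B_{k,n}'(s)|<\epsilon_1\}>1-\epsilon_2$, and observe that $\tau/k<\delta$ for all sufficiently large $n$. With this in hand, Lemma~\ref{l:r_3}, the evaluation at $s=\tau/k$, and the near-origin control (\ref{eq:022}) combine directly to give (\ref{eq-N-6.13}).
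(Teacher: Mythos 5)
Your proof is correct and follows exactly the route the paper intends (the paper omits the proof, noting only that it follows from Lemma \ref{l:r_3}, (\ref{6.3-a}), and (\ref{eq:022})): split at $s=\tau/k$, use $[ks]=1$ on $(0,\tau/k]$ so that $b_0(n)$ either cancels the increment ($\gamma<-1/2$) or is absent ($\gamma\geq-1/2$), and control the remaining Brownian-bridge term near the origin via (\ref{eq:022}). Your reduction to small $\epsilon$ and the conversion of the $\lim_\delta\liminf_n$ statement into an $o_{P'}(1)$ bound over $(0,\tau/k]$ are both valid and fill in details the paper leaves implicit.
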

\begin{lemma}
\label{l:r_3a}
Suppose that the conditions of Theorem \ref{th:mm_b_boot} are satisfied. Then,
\begin{equation}
k^{1/2}\{\hat b^*(n/k)-\hat b(n/k)\}/ \hat a(n/k) =  B_{k,n}'(1)+  o_{P'}(1).
 \label{eq:mm 3a}
 \end{equation}
\end{lemma}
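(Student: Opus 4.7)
The plan is to evaluate the uniform approximation of Lemma \ref{l:r_3} at the single point $s = 1$ and then swap the deterministic normalizer $a_0(n/k)$ for its estimated counterpart $\hat a(n/k)$. Because we are only evaluating at an interior point of $\cT$, none of the boundary-layer subtleties that drove Lemmas \ref{l:r_1}--\ref{l:r_3} reappear here, so the argument is short.

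First, I would recall that $\hat b(n/k)$ is defined in the main paper as the intermediate order statistic $X_{n-[k]+1,n}$ (with $\hat b^*(n/k) = X_{n-[k]+1,n}^*$ its bootstrap analogue). Taking $s = 1$ in Lemma \ref{l:r_3} gives $s^{\gamma+(1/2)+\epsilon} = 1$ and $s^{-\gamma-1} = 1$, so that
\begin{equation*}
\frac{k^{1/2}\{X_{n-[k]+1,n}^* - X_{n-[k]+1,n}\}}{a_0(n/k)} = B_{k,n}'(1) + o_{P'}(1),
\end{equation*}
i.e.\ the claim but with $a_0(n/k)$ in place of $\hat a(n/k)$.

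Second, I would replace $a_0$ by $\hat a$ using consistency of the scale estimator, $\hat a(n/k)/a_0(n/k) = 1 + o_{P'}(1)$. This is a standard result under the second-order conditions assumed in Theorem \ref{th:mm_b_boot}, and it is already established in the main paper for the original (non-bootstrap) sample; since $\hat a(n/k)$ is measurable with respect to $\{X_1,\ldots,X_n\}$ only, the $P$-consistency transfers automatically to $P'$-consistency. Note also that $B_{k,n}'(1) = O_{P'}(1)$, for instance by taking $\nu = 0$ in Lemma \ref{l:r_0}. Multiplying through by $a_0(n/k)/\hat a(n/k)$ then yields
\begin{equation*}
\frac{k^{1/2}\{\hat b^*(n/k) - \hat b(n/k)\}}{\hat a(n/k)} = \frac{a_0(n/k)}{\hat a(n/k)}\bigl\{B_{k,n}'(1) + o_{P'}(1)\bigr\} = B_{k,n}'(1) + o_{P'}(1),
\end{equation*}
which is exactly \eqref{eq:mm 3a}.

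The only genuine obstacle is bookkeeping: making sure that the normalizing function $a_0(\cdot)$ produced by the second-order representation in Lemma \ref{l:r_3} is indeed the one consistently estimated by $\hat a(\cdot)$, and that the $o_{P'}(1)$ remainder produced by the ratio $a_0/\hat a$ is absorbed by the $O_{P'}(1)$ size of $B_{k,n}'(1)$. Both of these are immediate from results already invoked elsewhere in the paper, so no new technical input is required.
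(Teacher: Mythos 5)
Your proposal is correct and follows essentially the same route as the paper: evaluate Lemma \ref{l:r_3} at $s=1$ and then adjust the normalizer, the only cosmetic difference being that the paper splits the adjustment into two factors, $a_0(n/k)/a(n/k)=1+o(1)$ (via Theorem 2.3.6 and Corollary 2.3.5 of de Haan and Ferreira) and $a(n/k)/\hat a(n/k)=1+o_{P'}(1)$ (via Lemma 4.7 of de Haan and Resnick), whereas you combine them into the single ratio $\hat a(n/k)/a_0(n/k)=1+o_{P'}(1)$. Both arguments rely on $B_{k,n}'(1)=O_{P'}(1)$ to absorb the resulting multiplicative error, so no substantive gap remains.
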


\begin{proof}
Let $a_0$ be as in Theorem 2.3.6 in \cite{Ha:Ferr:2006}.
By the definitions of $\hat b^*(n/k)$ and $\hat b(n/k)$ , we have
\begin{eqnarray*}
k^{1/2}\frac{\hat b^*(n/k)-\hat b(n/k)}{ \hat a(n/k)}
&= &k^{1/2}\frac{X_{n-k,n}^*-X_{n-k,n}}{a_0(n/k)} \ \frac{a_0(n/k)}{a(n/k)} \ \frac{a(n/k)} { \hat a(n/k)}.
\end{eqnarray*}
By Lemma \ref{l:r_3} with $s=1$, we have
$
k^{1/2}\{{X_{n-k,n}^*-X_{n-k,n}}\}/{a_0(n/k)}  = B_{k,n}'(1)+o_{P'}(1).$
By  Lemma 4.7 in \cite{deHa:Resn:1993}, we have
 ${\hat{a}(n/k)}/{a(n/k)} = 1+ o_{P'}(1).$
By Theorem 2.3.6 and Corollary 2.3.5 in \cite{Ha:Ferr:2006},
  we have
${a_0(n/k)}/{a(n/k)} = 1+ o(1).$
Therefore, $k^{1/2}\{{\hat b^*(n/k)-\hat b(n/k)}\}/{ \hat a(n/k)} = $
$ [B_{k,n}'(1)+  o_{P'}(1)] [1+o_{P'}(1)] [1+o(1)] = B_{k,n}'(1)+  o_{P'}(1).$
\end{proof}

The next result assumes Condition B stated in the main paper.
%
%
%
%

\begin{proposition}
Let $X_1, X_2, \ldots$ be a given sequence of iid random variables with distribution function $F$, and suppose that $F$ satisfies Condition B.
Let $0 \leq \epsilon \leq 1/2$.
Then, for the sequence of Brownian bridges $B_n$  in
(\ref{eq:qp}),  we have
\begin{equation}
\label{brown-1}
\sup_{1/(n+1) \leq t \leq n/(n+1)}
n^{\epsilon}\{t(1-t)\}^{\epsilon-1/2}|n^{1/2}f\{Q(t)\}\big(Q(t) - X_{[nt],n}\big) - B_n(t) | = O_p(1)
\end{equation}
as $n \rightarrow \infty.$ [Note: \cite{Ha:Ferr:2006} use the notation $[x]$ for $\floor{x}.$]
\end{proposition}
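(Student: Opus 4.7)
The plan is to reduce everything to the uniform empirical quantile process via the probability integral transform. Since Condition B makes $F$ continuous, setting $V_i = F(X_i)$ yields iid Uniform$(0,1)$ variables for which $X_{[nt],n} = Q(V_n(t))$, where $V_n$ is the uniform empirical quantile function of Section \ref{sec:App-1}. Condition B also ensures that $Q$ is differentiable with $Q'(u) = 1/f(Q(u))$, so the mean value theorem gives $Q(t) - X_{[nt],n} = \{t - V_n(t)\}/f(Q(\xi_n(t)))$ for some $\xi_n(t)$ between $t$ and $V_n(t)$. Multiplying by $n^{1/2}f(Q(t))$ and writing $\rho_n(t) := f(Q(t))/f(Q(\xi_n(t)))$ gives the key identity
\begin{equation*}
n^{1/2}f(Q(t))\{Q(t) - X_{[nt],n}\} \;=\; \rho_n(t)\,\beta_n(t),
\end{equation*}
which transfers the problem to a weighted approximation of $\rho_n \beta_n$ by $B_n$.

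I would then decompose $\rho_n\beta_n - B_n = \rho_n\{\beta_n - B_n\} + \{\rho_n - 1\}B_n$ and bound each summand after multiplication by the weight $w_n(t) := n^\epsilon\{t(1-t)\}^{\epsilon - 1/2}$. For the first summand, inequality (\ref{eq:qp}) with $\nu = \epsilon$ (assuming $\epsilon < 1/2$; the endpoint case follows by a standard continuity argument) yields $\sup w_n(t)|\beta_n(t) - B_n(t)| = O_P(1)$, so it remains to verify $\sup_{t \in [1/(n+1),\,n/(n+1)]} \rho_n(t) = O_P(1)$. This follows because the weighted bound $|t - V_n(t)| = O_P(n^{-1/2}\{t(1-t)\}^{1/2-\delta})$, a direct consequence of (\ref{eq:qp}), shows that the ratios $\xi_n(t)/t$ and $(1-\xi_n(t))/(1-t)$ are uniformly bounded away from $0$ and $\infty$ in probability; the tail regularity of $f\circ Q$ imposed by Condition B then controls $\rho_n(t)$. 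For the second summand I would combine the Brownian bridge oscillation bound $\sup\{t(1-t)\}^{\delta-1/2}|B_n(t)| = O_P(1)$ for small $\delta > 0$ with a modulus estimate $|\rho_n(t) - 1| = O(|t - \xi_n(t)|^\alpha)$ or similar provided by Condition B, and use the same bound on $|t - \xi_n(t)|$ to check that $w_n(t)|\rho_n(t) - 1||B_n(t)| = O_P(1)$ uniformly.

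The main difficulty lies near the endpoints, $t$ close to $1/(n+1)$ or $n/(n+1)$, where $V_n(t)/t$ and $(1-V_n(t))/(1-t)$ are merely bounded in probability rather than tending to $1$; the ratio $\rho_n(t)$ is then governed by the tail behavior of the density $f$, which is exactly what Condition B is designed to regulate. The delicate part of the argument is the exponent bookkeeping: one must choose $\delta > 0$ small enough that the endpoint decay of $|\beta_n - B_n|$ supplied by (\ref{eq:qp}) and the endpoint growth permitted for $|B_n|$ are both absorbed by the weight $w_n(t)$ and the modulus of $\rho_n - 1$ simultaneously. Once such a $\delta$ is fixed the two bounds combine to give the claimed $O_P(1)$.
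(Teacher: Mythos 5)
Your route is genuinely different from the paper's: the paper does not reprove this approximation at all, but simply observes that the existence statement is Theorem 6.2.1 of Cs\"org\H{o} and Horv\'ath (1993) and that, because the proof of that theorem is built on the uniform approximation (\ref{eq:qp}) (their Theorem 4.2.1), the Brownian bridges there may be taken to be the very sequence $B_n$ of (\ref{eq:qp}). What you are attempting is essentially a re-derivation of that classical theorem. Your skeleton is the right one --- the identity $n^{1/2}f\{Q(t)\}\big(Q(t)-X_{[nt],n}\big)=\rho_n(t)\beta_n(t)$ and the split $\rho_n\beta_n-B_n=\rho_n(\beta_n-B_n)+(\rho_n-1)B_n$ are exactly how the cited proof proceeds, and your construction has the virtue of making the identification of $B_n$ with the sequence in (\ref{eq:qp}) automatic rather than a matter of inspecting someone else's proof. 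The first summand is also handled correctly: (\ref{eq:qp}) with $\nu=\epsilon$ plus $\sup\rho_n=O_P(1)$ (which does follow from the Cs\"org\H{o}--R\'ev\'esz-type density condition together with the boundedness in probability of $V_n(t)/t$ and $(1-V_n(t))/(1-t)$) disposes of it.

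The gap is in the second summand, and it is not merely ``exponent bookkeeping.'' The mechanism you propose --- a H\"older modulus $|\rho_n(t)-1|=O(|t-\xi_n(t)|^{\alpha})$ combined with the global bound $|B_n(t)|\le O_P(1)\{t(1-t)\}^{1/2-\delta}$ --- cannot close near the endpoints. At $t\asymp 1/n$ the weight is $w_n(t)=n^{\epsilon}\{t(1-t)\}^{\epsilon-1/2}\asymp n^{1/2}$, while $\rho_n(t)-1$ is only $O_P(1)$ there (as you yourself note, $V_n(t)/t$ does not tend to $1$), so the product $w_n|\rho_n-1|\,|B_n|$ becomes $O_P(1)\,n^{\epsilon}\{t(1-t)\}^{\epsilon-\delta}$, which is $O_P(n^{\max(\epsilon,\delta)})$ at $t\asymp 1/n$ --- not $O_P(1)$. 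Moreover, Condition B does not supply a uniform H\"older modulus for $\rho_n-1$; what it gives is a ratio bound of the form $f\{Q(s)\}/f\{Q(t)\}\le\{\,(s\vee t)(1-s\wedge t)/((s\wedge t)(1-s\vee t))\,\}^{\gamma}$, which controls $\rho_n$ but not its distance from $1$ near $0$ and $1$. To repair the argument you must split $[1/(n+1),\,n/(n+1)]$ into endpoint blocks of width $O(a)$ and an interior, use on the endpoint blocks the sharper local estimate $\sup_{0\le s\le a}s^{\delta-1/2}|B_n(s)|=O_P(a^{\delta})$ of (\ref{l:r_00}) (so that $w_n(t)|B_n(t)|$ itself is $O_P(1)$ there and only $\sup\rho_n=O_P(1)$ is needed), and on the interior use that $\rho_n\to1$ uniformly; the cases $\epsilon=0$ and $\epsilon=1/2$ need separate treatment. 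Unless you intend to carry out that full analysis, the economical course is the paper's: cite Theorem 6.2.1 and verify only that its Brownian bridges coincide with those of (\ref{eq:qp}).
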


\begin{proof}
The fact that there exists a sequence of Brownian bridges $B_n$ that satisfy (\ref{brown-1})
is the same as Theorem 6.2.1 in \cite{Cs-Horvath-1993}.
To show that the $B_n$ can be chosen to be the same as the one in  (\ref{eq:qp}) follows from the
 proof of this theorem given in \cite{Cs-Horvath-1993} on page 381.
The proof therein uses Theorem 4.2.1 which is the same as (\ref{eq:qp}).
\end{proof}

Let $\tilde{W}_{k,n}(s) = (n/k)^{1/2}B_n(1- ks/n)$,
where $B_n$ is the sequence of Brownian bridges that appear in (\ref{eq:qp}).
Also, let $C_n = \{ s :  n/(n+1) < ks <  (k+1)\}.$
The first part of the following lemma is the same as Lemma 2.4.10 of \cite{Ha:Ferr:2006}, and the second part follows  from the first.
\begin{proposition}
\label{lem-2.4.10}
Let $\epsilon >0$ and $\gamma \in \mathbb{R}$ be given.
Let $V$ be a uniform(0,1) random variable,
$\{V_1, V_2, \ldots \}$ be a given sequence of iid uniform (0,1) random variables, $Y =V^{-1}$, and
$Y_i = V_i^{-1}.$
Then we have the following:
\begin{equation}
\sup_{s \in C_n}
s^{\gamma+(1/2)+\epsilon}
 \left|k^{1/2}\frac{  \left\{ (k/n)Y_{n-\floor{ks},n})  \right\}^{\gamma} -1}{\gamma} -  k^{1/2}\frac{s^{-\gamma} - 1 } { \gamma} - \frac{\tilde{W}_{k,n}(s)  } {s^{\gamma+1}  }  \right| = o_{P'}(1)  \label{eq-2.4.10-a}
\end{equation}
\begin{equation}
\sup_{s \in C_n} s^{\gamma+(1/2)+\epsilon} \left| k^{1/2}\frac{  \left\{ (k/(nV_{[ks],n})  \right\}^{\gamma} -1}{\gamma} - k^{1/2} \frac{s^{-\gamma} - 1 } { \gamma} - \frac{\tilde{W}_{k,n}(s)  } {s^{\gamma+1}  }  \right| = o_{P'}(1).
\label{eq-2.4.10-b}
\end{equation}
%
\end{proposition}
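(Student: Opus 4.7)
The plan has two parts, mirroring the two assertions of the proposition. For the first identity (\ref{eq-2.4.10-a}), I would simply cite Lemma 2.4.10 of \cite{Ha:Ferr:2006}. The only point worth mentioning is that the Brownian bridge appearing there can be taken to be the same $B_n$ as in (\ref{eq:qp}); this is immediate from the de Haan--Ferreira proof, which is built on (\ref{eq:qp}) as its starting approximation. Thus the first assertion is a direct citation.

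For (\ref{eq-2.4.10-b}), I would reduce to (\ref{eq-2.4.10-a}) by exploiting the order-statistic duality $Y_i = 1/V_i$. Reciprocation reverses the ordering, so $Y_{j,n} = 1/V_{n-j+1,n}$ and hence $Y_{n-\floor{ks},n} = 1/V_{\floor{ks}+1,n}$. Since the paper's convention is $[x] = \lceil x \rceil$, one has $\floor{ks}+1 = \lceil ks \rceil = [ks]$ whenever $ks \notin \mathbb{Z}$, so $(k/n)Y_{n-\floor{ks},n} = k/(nV_{[ks],n})$ holds pointwise on the subset of $C_n$ on which $ks$ is not an integer. The integrands inside the two suprema therefore agree on this co-countable set, and by the triangle inequality it suffices to prove that the weighted difference of the two integrands is $o_{P'}(1)$ uniformly over the finite exception set $\{s_j = j/k : 1 \leq j \leq k\}$.

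At each $s_j$ the two integrands differ only by using $V_{j+1,n}$ in place of $V_{j,n}$. I would apply the mean value theorem to $x \mapsto \gamma^{-1}(x^{\gamma}-1)$ between $x = k/(nV_{j,n})$ and $x = k/(nV_{j+1,n})$, then use (\ref{eq:08a}) to bound $k/(nV_{j,n}) \asymp k/j$ and the spacing-induced relative gap $V_{j+1,n}/V_{j,n}-1 = O_{P'}(1/j)$ uniformly in $j$. After multiplying by the weight $(j/k)^{\gamma+1/2+\epsilon}$ and the factor $k^{1/2}$ from the definition of the integrand, a direct count of exponents yields a per-point weighted discrepancy of order $j^{-1/2+\epsilon}k^{-\epsilon}$. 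Maximising over $1 \leq j \leq k$ (for $0 < \epsilon < 1/2$ the maximum is at $j=1$) gives the uniform bound $O_{P'}(k^{-\epsilon}) = o_{P'}(1)$, completing the reduction. The case $\gamma = 0$ is handled analogously with the logarithm in place of the power, via the identity $\log(V_{j,n}/V_{j+1,n}) = O_{P'}(1/j)$.

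The main obstacle will be this uniform control at the jump points $s_j$. The discrepancy is most delicate for small $j$, where the ratio of adjacent uniform order statistics is most variable; the saving grace is the decay of the weight $(j/k)^{\gamma+1/2+\epsilon}$ together with the strict positivity of $\epsilon$, which together just beat the $k^{1/2}$ factor and the mean-value-theorem inflation coming from the power map.
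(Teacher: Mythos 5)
Your proposal is correct and shares the paper's skeleton --- cite de Haan--Ferreira (with the observation that their proof is built on the strong approximation (\ref{eq:qp}), so the bridges match) for (\ref{eq-2.4.10-a}), then transfer to (\ref{eq-2.4.10-b}) via $Y_{n-\floor{ks},n}=1/V_{\floor{ks}+1,n}$ and the identity $\floor{ks}+1=[ks]$ off the grid points --- but it diverges from the paper at the one step that carries real content, namely the grid points $s_j=j/k$ where the two integrands use adjacent order statistics. The paper never estimates this discrepancy: writing $A_{k,n}$ for the weighted (\ref{eq-2.4.10-b})-integrand and $\Delta_{k,n}$ for the adjacent-order-statistic correction, it observes that $\Delta_{k,n}$ vanishes on each open interval $(i/k,(i+1)/k)$, that $A_{k,n}$ is continuous on the half-open interval $(i/k,(i+1)/k]$ (the order statistic is constant there), and hence that $\sup_{(i/k,(i+1)/k]}|A_{k,n}|=\sup_{(i/k,(i+1)/k)}|A_{k,n}+\Delta_{k,n}|\le o_{P'}(1)$ --- a purely soft argument. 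You instead bound $\Delta_{k,n}(s_j,\gamma)$ directly by the mean value theorem and the spacing estimate, arriving at $j^{-1/2+\epsilon}k^{-\epsilon}$ per point; this is a valid alternative and your exponent count is right, though two small points deserve care: (i) the uniform-in-$j$ claim $V_{j+1,n}/V_{j,n}-1=O_{P'}(1/j)$ is slightly too strong (the maximal normalized spacing over $j\le k$ contributes an extra $\log k$ factor), which your $k^{-\epsilon}$ slack absorbs but should be acknowledged; and (ii) your maximization over $j$ is stated only for $0<\epsilon<1/2$, while the proposition allows any $\epsilon>0$ --- for $\epsilon\ge 1/2$ the maximum sits at $j=k$ and gives $k^{-1/2}$, so the conclusion still holds. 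The paper's continuity device is cleaner and exponent-free; your computation is more explicit and would generalize to situations where the correction term does not literally vanish between grid points.
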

\begin{proof}
The proof of (\ref{eq-2.4.10-a}) appears on page 53 of \cite{Ha:Ferr:2006} in their initial part of the proof of Lemma 2.4.10 therein, and it  is based on (\ref{brown-1}).
To prove (\ref{eq-2.4.10-b}), let
\begin{equation}
\Delta_{k,n}(s, \gamma) =
 k^{1/2}  \left( \big[ {  \left\{ k/(nV_{\floor{ks}+1,n})  \right\}^{\gamma} -1}\big] {\gamma}^{-1} -
\big[ {  \left\{ k/(nV_{[ks],n})  \right\}^{\gamma} -1}\big] {\gamma}^{-1} \right) \quad (0 < s \leq 1).
\label{eq:B.40}
\end{equation}
Then $\Delta_{k,n}(s, \gamma) = 0$ if $ s \not\in \{ 1/k, 2/k, \ldots \}.$
Let
$$A_{k,n}(s,\gamma) = s^{\gamma+(1/2)+\epsilon}
 \left[
k^{1/2}\frac{  \left\{ (k/(nV_{[ks],n})  \right\}^{\gamma} -1}{\gamma}
- k^{1/2} \frac{s^{-\gamma} - 1 } { \gamma}
-\frac{\tilde{W}_{k,n}(s)  } {s^{\gamma+1}  }\right].$$
Now,  substitute $Y_{n-\floor{ks}, n} = \{V_{\floor{ks}+1,n}\}^{-1}$ in (\ref{eq-2.4.10-a}),
 and add and subtract
$k^{1/2} \{{  \left\{ (k/(nV_{[ks],n})  \right\}^{\gamma} -1}\}/{\gamma}$
to the expression within the absolute sign in (\ref{eq-2.4.10-a}) to obtain
$\sup_{s \in C_n}
\left| A_{k,n}(s,\gamma) +\Delta_{k,n}(s, \gamma) \right| = o_{P'}(1).$
Since $\Delta_{k,n}(s, \gamma)= 0$ for $i/k < s < (i+1)/k$ ($i = 1, \ldots, k$),
it follows from the continuity properties of $A_{k,n}$ that
$\sup_{i/k < s \leq (i+1)/k} \left| A_{k,n}(s,\gamma) \right|
= \sup_{i/k < s < (i+1)/k} \left| A_{k,n}(s,\gamma) +\Delta_{k,n}(s, \gamma)\right|
 = o_{P'}(1).$
By a similar argument, $\sup_{\{n/(n+1)\}/k \leq s \leq 1/k} \left| A_{k,n}(s,\gamma) \right|=o_{P'}(1).$
Therefore,
$\sup_{s \in C_n} \left| A_{k,n}(s,\gamma) \right|
\leq \sup_{s \in C_n} \left| A_{k,n}(s,\gamma) +\Delta_{k,n}(s, \gamma)\right|
= o_{P'}(1)$, which is (\ref{eq-2.4.10-b}).
\end{proof}

\noindent \textbf{Remark:}
It can also be verified, using (\ref{eq-2.4.10-b}) that, for $0 < \lambda \leq 1$
\begin{equation}
\label{eq-Propo-6-2-c}
\sup_{\lambda < ks < k+1}
s^{\gamma+(1/2)+\epsilon} \left|
k^{1/2}\frac{  \left\{ (k/(nV_{[ks],n})  \right\}^{\gamma} -1}{\gamma}
- k^{1/2} \frac{s^{-\gamma} - 1 } { \gamma}
-\frac{\tilde{W}_{k,n}(s)  } {s^{\gamma+1}  }
  \right|
= o_{P'}(1).
\end{equation}



Let $\gamma_- = \min\{0, \gamma\}$.
As in the rest of this paper,  the function $(s^{-\gamma}-1)/ \gamma$ is interpreted as $-\log s$ when $\gamma=0$.

\begin{lemma}
\label{l:mm01}
Suppose that the conditions of Theorem \ref{th:mm_gamma_boot} are satisfied.
Then, for $\epsilon >0$,  we have
\begin{equation}
\sup_{s \in C_n}  s^{\gamma_-+ 1/2+\epsilon}
\Big| k^{1/2} \frac{\log X_{n-[ks]+1,n}  -\log U(\frac{n}{k})}{q_0(\frac{n}{k})}
 -  k^{1/2} \frac{s^{-\gamma}-1}{\gamma}
 - \frac{\tilde{W}_{k,n}(s) }  {s^{\gamma+1}  } \Big| =  o_{P'}(1)
   \label{eq:mm l1.1}
\end{equation}
\begin{equation}
 \sup_{s \in C_n}  s^{\gamma_-+ 1/2+\epsilon} \Big| k^{1/2}
  \frac{ \log X_{n-[ks]+1,n}^*  - \log U(\frac{n}{k})}{q_0(\frac{n}{k})}
  -  k^{1/2} \frac{s^{-\gamma}-1}{\gamma}
  - \frac{ \tilde{W}_{k,n}(s)+ B_{k,n}'(s)}{s^{\gamma+1}}  \Big|
    =  o_{P'}(1)
\label{eq:mm l1.2}
\end{equation}
\begin{equation}
 \sup_{s \in C_n}  s^{\gamma_-+ 1/2+\epsilon} \Big| k^{1/2}
\frac{ \log X_{n-[ks]+1,n}^*  - \log X_{n-[ks]+1,n} } {q_0(\frac{n}{k})}    - \frac{B_{k,n}'(s)}{s^{\gamma_-+1}}  \Big| =  o_{P'}(1).
 \label{eq:mm l1.3}
\end{equation}
%

\end{lemma}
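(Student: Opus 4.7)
The plan is to treat (\ref{eq:mm l1.1})--(\ref{eq:mm l1.3}) by combining a logarithmic version of the second-order uniform inequality for $U$ (the analogue of Corollary 2.3.5 and Theorem 2.3.6 of \cite{Ha:Ferr:2006}, using the auxiliary function $q_0$) with the Brownian-bridge approximations for the ordinary tail quantile process in Proposition~\ref{lem-2.4.10} and for the bootstrap tail quantile process in Lemmas~\ref{l:r_1} and \ref{l:r_2}. The structure parallels the argument for Lemma~\ref{l:r_3}, except that $U$ is replaced throughout by $\log U$.

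For (\ref{eq:mm l1.1}), apply the log-analogue of the uniform inequality at $t = n/k$ and $x = \cV_s$ to obtain
\[
\Bigl|\tfrac{\log U((n/k)\cV_s) - \log U(n/k)}{q_0(n/k)} - \tfrac{\cV_s^\gamma - 1}{\gamma} - A_0(n/k)\Psi_{\gamma,\rho}(\cV_s)\Bigr| \le \varepsilon |A_0(n/k)|\Lambda_n(s;\gamma,\rho,\delta).
\]
Multiply through by $k^{1/2}s^{\gamma_-+1/2+\epsilon}$ and use $k^{1/2}A_0(n/k) = o(1)$, the uniform bounds (\ref{eq:08a}) on $\cV_s$, the $\log$-bound (\ref{eq-87d}), and the power-bound (\ref{eq:l54}) to show that all $A_0$-related remainders are $o_{P'}(1)$ over $s \in C_n$, exactly as in the proof of Lemma~\ref{l:r_3}. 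The claim then reduces to approximating $k^{1/2}(\cV_s^\gamma - 1)/\gamma$ by $k^{1/2}(s^{-\gamma}-1)/\gamma + \tilde{W}_{k,n}(s)/s^{\gamma+1}$, which is (\ref{eq-Propo-6-2-c}). For (\ref{eq:mm l1.2}), repeat the expansion at $x = \cV_s^*$, decompose $k^{1/2}(\cV_s^{*\gamma}-1)/\gamma = k^{1/2}(\cV_s^\gamma - 1)/\gamma + L_n(s)$ with $L_n$ as in Lemma~\ref{l:r_2}, and combine (\ref{eq-Propo-6-2-c}) with Lemma~\ref{l:r_2} to obtain the additional Brownian-bridge term $B_{k,n}'(s)/s^{\gamma+1}$.

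For (\ref{eq:mm l1.3}), split on the sign of $\gamma$. When $\gamma \le 0$ one has $\gamma_- = \gamma$, and direct subtraction of (\ref{eq:mm l1.1}) from (\ref{eq:mm l1.2}) gives the result: the $k^{1/2}(s^{-\gamma}-1)/\gamma$ and $\tilde{W}_{k,n}(s)/s^{\gamma+1}$ terms cancel, leaving $B_{k,n}'(s)/s^{\gamma+1} = B_{k,n}'(s)/s^{\gamma_-+1}$ with $o_{P'}(1)$ remainder under the common weight $s^{\gamma_-+1/2+\epsilon}$. When $\gamma > 0$ one has $\gamma_- = 0$, and the claimed denominator $s^{\gamma_-+1} = s$ differs from the $s^{\gamma+1}$ that would arise from simply subtracting (\ref{eq:mm l1.1}) from (\ref{eq:mm l1.2}); hence a direct argument is needed. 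In this regime $\log U$ is $\Pi$-varying with $q_0(n/k) \to \gamma$. Writing $\log X^*_{n-[ks]+1,n} - \log X_{n-[ks]+1,n} = \log U(1/V_{[ks],n}^*) - \log U(1/V_{[ks],n})$ and expanding $\log U(tx) - \log U(t) = q_0(t)\log x$ plus a controlled remainder at $t = 1/V_{[ks],n}$, $x = V_{[ks],n}/V_{[ks],n}^*$, the difference reduces to $\gamma[\log V_{[ks],n} - \log V_{[ks],n}^*]$ up to smaller order. Dividing by $q_0(n/k) \to \gamma$ and invoking Lemma~\ref{l:r_1} yields $B_{k,n}'(s)/s = B_{k,n}'(s)/s^{\gamma_-+1}$, with the remainders shown to be $o_{P'}(1)$ under the sharper weight $s^{1/2+\epsilon}$ via (\ref{eq:08a}), (\ref{eq:l54}), and Lemma~\ref{l:r_0}.

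The main obstacle is precisely this $\gamma > 0$ subcase of (\ref{eq:mm l1.3}): the claimed expression has denominator $s^{\gamma_-+1} = s$ rather than the $s^{\gamma+1}$ one would obtain by taking the difference of (\ref{eq:mm l1.1}) and (\ref{eq:mm l1.2}), so (\ref{eq:mm l1.3}) is not a mere corollary of the other two. Resolving it requires exploiting the $\Pi$-varying (rather than $\mathrm{ERV}_\gamma$) structure of $\log U$ in the heavy-tail regime to collapse the denominator from $s^{\gamma+1}$ to $s$, while keeping the second-order remainders uniformly $o_{P'}(1)$ under the sharper weight $s^{1/2+\epsilon}$ on $C_n$.
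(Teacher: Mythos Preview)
Your overall strategy---combine the second-order inequality for $\log U$ with the Brownian-bridge approximations of Proposition~\ref{lem-2.4.10} and Lemma~\ref{l:r_2}---is exactly what the paper does. The difficulty you flag, however, is self-inflicted. The uniform inequality for $\log U$ on page~104 of \cite{Ha:Ferr:2006} is \emph{not} the one you wrote down: since the extreme-value index of $\log X$ is $\gamma_-$ (not $\gamma$), the inequality reads
\[
\Bigl|\tfrac{\log U(tx)-\log U(t)}{q_0(t)}-\tfrac{x^{\gamma_-}-1}{\gamma_-}-Q_0(t)\,\Psi_{\gamma_-,\rho'}(x)\Bigr|\le \varepsilon\,|Q_0(t)|\,x^{\gamma_-+\rho'}\max\{x^{\delta},x^{-\delta}\},
\]
with second-order parameter $\rho'$ and auxiliary function $Q_0$ in place of your $\rho$ and $A_0$. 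The paper applies this with $t=n/k$, $x=\cV_s$, sets $A_2(s)=(\cV_s^{\gamma_-}-1)/\gamma_-$, and then invokes (\ref{eq-2.4.10-b}) \emph{at the index $\gamma_-$} to replace $k^{1/2}A_2(s)$ by $k^{1/2}(s^{-\gamma_-}-1)/\gamma_-+\tilde W_{k,n}(s)/s^{\gamma_-+1}$. (The $\gamma$'s appearing in the display (\ref{eq:mm l1.1})--(\ref{eq:mm l1.2}) are evidently typographical for $\gamma_-$; compare Lemmas~\ref{l:mm01a} and \ref{l:mm02}, and the paper's own proof, all of which carry $\gamma_-$.)

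Once you use the correct $\gamma_-$-form, the ``main obstacle'' you identify for (\ref{eq:mm l1.3}) disappears: both (\ref{eq:mm l1.1}) and (\ref{eq:mm l1.2}) already have denominator $s^{\gamma_-+1}$, so (\ref{eq:mm l1.3}) is obtained by straight subtraction, uniformly in $\gamma$, with no case split and no separate $\Pi$-variation argument. Your proposed workaround for $\gamma>0$ via Lemma~\ref{l:r_1} is plausible but unnecessary, and it would still leave (\ref{eq:mm l1.1})--(\ref{eq:mm l1.2}) unproved as written (with $\gamma$), since the log-expansion simply does not produce $(\cV_s^{\gamma}-1)/\gamma$ when $\gamma>0$.
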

%
%
%

\begin{proof}
\textit{Proof of (\ref{eq:mm l1.1}):}
Let $ \delta>0$.
Recall $\cV_s = k/(n V_{[ks],n})$ and $\cV_s^* = k/(n V_{[ks],n}^*)$;
$\cV_s$ and $\cV_s^*$ also depend on $(k,n)$ but not shown explicitly.
We apply inequality at the top of page 104 in \cite{Ha:Ferr:2006}
 with
$t := t_n=n/k$ and $x:= x_n(s) = k/(nV_{[ks],n})$  ($s \in C_n$).
Since  $\sup_{s \in C_n} V_{[ks],n} \leq V_{k+1,n}= o_{P'}(1)$,
$tx \geq \inf_{s \in C_n} t_n x_n(s) \geq \{V_{k+1,n}\}^{-1}  \rightarrow \infty $, in probability[$P']$.
Hence, for any fixed $t_0$,  $t_n x_n(s)>t_0$ with probability approaching 1, uniformly on
 $s \in C_n$.
 For the rest of the proof of this theorem, the inequalities are implicitly assumed to hold uniformly over $s \in C_n$, with probability approaching 1 as $n \rightarrow \infty,$ without further comment.
Let $A_2(s) = (\cV_s^{\gamma_-}-1)/\gamma_-$,
$A_3(s) = Q_0\left({n}/{k}\right)  \Psi_{\gamma_-,\rho'}\left(\cV_s\right)$, and
$A_4(s) = \varepsilon \left|Q_0\left({n}/{k}\right)\right| \cV_s^{\gamma_-+\rho'}\max\left\{\cV_s^\delta, \cV_s^{-\delta}\right\}.$
%
Applying the inequality at the top of page 104 in \cite{Ha:Ferr:2006},
with the foregoing
 choices for $t$ and $x$,
and then multiplying the result by $k^{1/2}s^{\gamma_-+(1/2)+\epsilon}$, we obtain,
\begin{equation}
\begin{split}
& s^{\gamma_-+1/2+\epsilon}k^{1/2} \left|\frac{\log X_{n-[ks]+1,n}-\log U(\frac{n}{k})}{q_0(\frac{n}{k})}
- A_2(s) - A_3(s) \right|
\leq  s^{\gamma_-+1/2+\epsilon}k^{1/2} |A_4(s)|
\end{split}
\label{eq:r_2}
\end{equation}
Let
$D_{k,n}(s, \gamma) =   \big[ k^{1/2} \{{s^{-\gamma}-1}\}/{\gamma} + {\tilde{W}_{k,n}(s) } / {s^{\gamma+1}  }\big]$.
Since (\ref{eq-2.4.10-b}) holds or any $\gamma \in \mathbb{R}$, we have
$\sup_{s \in C_n}
s^{\gamma_-+(1/2)+\epsilon}
\left| k^{1/2} A_2(s) - D_{k,n}(s, \gamma_-)   \right| =  o_{P'}(1).$
Using the  form of  $\Psi_{\gamma,\rho}$ in Corollary 2.3.5 of
\cite{Ha:Ferr:2006} on page 46,
it may be verified that
$\sup_{s \in C_n}  s^{\gamma_-+ (1/2)+\varepsilon} k^{1/2} |A_3(s) | = o_{P'}(1)$.
Using(\ref{eq:l54}) and Assumption (A.4), it may be verified that
$\sup_{s \in C_n}  s^{\gamma_-+ (1/2)+\varepsilon} k^{1/2} |A_4(s) | = o_{P'}(1)$.
The result follows by substituting these in (\ref{eq:r_2}).
The proof of the next part is similar to that of the previous one, and hence omitted.
\end{proof}



For $\gamma \geq -(1/2),$ let $b_0(n) =b_0^*(n)= \log U({n}/{k})$;
for $\gamma< -(1/2)$,
let $$b_0(n) = \log U(1/V_{1,n}) + \gamma_-^{-1} {q_0(n/k)}, \
b_0^*(n) =  \log U(1/V_{1,n}^*) + \gamma_-^{-1}{q_0(n/k)}.$$
%


The statement of the two results in the next lemma, and their proofs are similar to those of
the previous Lemma \ref{l:mm01}.  Therefore, the proof is omitted.

\begin{lemma}
\label{l:mm01a}
Suppose that conditions of Theorem \ref{th:mm_gamma_boot} are satisfied. Then,
for $\epsilon >0$,
 we have
\begin{equation}
\sup_{0 < ks < k+1}  s^{\gamma_-+ 1/2+\epsilon} \left| k^{1/2} \frac{ \log X_{n-[ks]+1,n}  - b_0(n)} {q_0(\frac{n}{k})}
  - k^{1/2}\frac{s^{-\gamma_-}-1}{\gamma_-}
  - \frac{\tilde{W}_{k,n}(s)}{s^{\gamma_-+1}}
  \right| =  o_{P'}(1),
 \label{eq:mm c1.1}
\end{equation}
\begin{equation}
\sup_{0 < ks < k+1}  s^{\gamma_-+ 1/2+\epsilon} \left| k^{1/2} \frac{ \log X_{n-[ks],n}^*  - b_0^*(n)} {q_0(\frac{n}{k})}
+ k^{1/2}\frac{s^{-\gamma_-}-1}{\gamma_-}  - \frac{\tilde{W}_{k,n}(s) + B_{k,n}'(s)}{s^{\gamma_-+1}} \right| =  o_{P'}(1).
 \label{eq:mm c1.2}
\end{equation}
\end{lemma}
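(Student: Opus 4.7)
My plan, mirroring the structure of the proof of Lemma \ref{l:mm01}, is to split the supremum range $\{s : 0 < ks < k+1\}$ into the bulk $C_n = \{s : n/(n+1) < ks < k+1\}$, on which Lemma \ref{l:mm01} essentially does the job, and the small-$s$ complement $\mathcal{S}_n = (0,\, n/\{(n+1)k\}]$, on which $[ks] = 1$ and therefore $\log X_{n-[ks]+1,n} = \log X_{n,n}$ and $\log X_{n-[ks]+1,n}^* = \log X_{n,n}^*$ are both constant in $s$.

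For $\gamma \ge -1/2$, the centering $b_0(n) = b_0^*(n) = \log U(n/k)$ already coincides with the one in Lemma \ref{l:mm01}, so (\ref{eq:mm c1.1}) and (\ref{eq:mm c1.2}) on $C_n$ are direct restatements of (\ref{eq:mm l1.1}) and (\ref{eq:mm l1.2}). To extend to $\mathcal{S}_n$, I would specialize (\ref{eq:mm l1.1}) at $s = 1/k \in C_n$ to write
\begin{equation*}
k^{1/2}\{\log X_{n,n} - \log U(n/k)\}/q_0(n/k) = k^{1/2}(k^{\gamma_-}-1)/\gamma_- + k^{\gamma_-+1}\tilde W_{k,n}(1/k) + r_n,
\end{equation*}
with $r_n = o_{P'}(k^{\gamma_-+1/2+\epsilon})$, substitute this as a constant in $s$ into the LHS of (\ref{eq:mm c1.1}), and reduce the claim on $\mathcal{S}_n$ to showing that the weighted residual $s^{\gamma_-+1/2+\epsilon}\bigl|k^{1/2}(k^{\gamma_-}-s^{-\gamma_-})/\gamma_- + k^{\gamma_-+1}\tilde W_{k,n}(1/k) - \tilde W_{k,n}(s)/s^{\gamma_-+1}\bigr|$ is $o_{P'}(1)$ uniformly on $\mathcal{S}_n$. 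The $r_n$ contribution is bounded by $(sk)^{\gamma_-+1/2+\epsilon}\cdot o_{P'}(1) = o_{P'}(1)$ since $sk \le 1$ and $\gamma_-+1/2+\epsilon \ge \epsilon > 0$; the purely algebraic piece, via the substitution $u = sk \in (0,1]$, reduces to $k^{-\epsilon}$ times a bounded function of $u$; and the two Brownian-bridge pieces are controlled by the analog of (\ref{eq:022}) for $\tilde W_{k,n}(s) = (n/k)^{1/2}B_n(1-ks/n)$, inherited from (\ref{l:r_00}) via the symmetry $\{B_n(1-u)\}_u \stackrel{d}{=} \{B_n(u)\}_u$. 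The proof of (\ref{eq:mm c1.2}) follows the same template with the additional $B_{k,n}'(s)/s^{\gamma_-+1}$ term carried through.

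For $\gamma < -1/2$, the centerings differ, and the key is the identity
\begin{equation*}
\log X_{n-[ks]+1,n} - b_0(n) = \{\log X_{n-[ks]+1,n} - \log U(n/k)\} - \{\log X_{n,n} - \log U(n/k)\} - \gamma^{-1}q_0(n/k),
\end{equation*}
together with its bootstrap analog. Applying Lemma \ref{l:mm01} to the first bracket on $C_n$ and (separately) to the second at $s = 1/k$, algebraic simplification shows the quantity inside the absolute value in (\ref{eq:mm c1.1}) equals the stated target plus the correction $-k^{1/2+\gamma}/\gamma - k^{\gamma+1}\tilde W_{k,n}(1/k)$; both pieces are $O_{P'}(k^{\gamma+1/2}) = o_{P'}(1)$ because $\gamma + 1/2 < 0$ and $\tilde W_{k,n}(1/k) = (n/k)^{1/2}B_n(1-1/n) = O_{P'}(k^{-1/2})$ (via $B_n(1-1/n) = O_{P'}(n^{-1/2})$). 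For (\ref{eq:mm c1.2}) there is a further negligible term $-k^{\gamma+1}B_{k,n}'(1/k) = O_{P'}(k^{\gamma+1/2})$. The extension to $\mathcal{S}_n$ exploits the exact cancellation $\log X_{n,n} - b_0(n) = -\gamma^{-1}q_0(n/k)$: on $\mathcal{S}_n$ the LHS of (\ref{eq:mm c1.1}) equals $-k^{1/2}/\gamma$, and the weighted difference from the target collapses to $k^{1/2}s^{1/2+\epsilon}/|\gamma| + s^{-1/2+\epsilon}|\tilde W_{k,n}(s)| = O_{P'}(k^{-\epsilon})$ by the same Brownian-bridge control.

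The principal technical obstacle is the uniform weighted small-$s$ control of $\tilde W_{k,n}$ (and of $B_{k,n}'$ for the bootstrap statement); once the analog of (\ref{l:r_00}) for the right-endpoint behavior of Brownian bridges is confirmed via the symmetry $u \mapsto 1-u$, all the small-$s$ residuals are $o_{P'}(1)$ and the proof is complete.
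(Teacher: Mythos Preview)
Your proposal is correct and, since the paper omits the proof entirely (it merely says ``the statement of the two results in the next lemma, and their proofs are similar to those of the previous Lemma~\ref{l:mm01}''), your write-up in fact supplies considerably more detail than the paper itself. Your strategy---splitting $\{0<ks<k+1\}$ into $C_n$ and the small-$s$ set $\mathcal{S}_n$ where $[ks]=1$, invoking Lemma~\ref{l:mm01} on $C_n$, and controlling the small-$s$ residuals via the analogue of Lemma~\ref{l:r_0} for $\tilde W_{k,n}$ obtained from the Brownian-bridge symmetry $B_n(1-u)\stackrel{d}{=}B_n(u)$---is precisely the natural way to ``make the proof similar to Lemma~\ref{l:mm01}'' and is consistent with what the paper intends.

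One minor point worth tightening in the $\gamma<-1/2$ case on $C_n$: when you evaluate (\ref{eq:mm l1.1}) at $s=1/k$ the remainder is $k^{\gamma_-+1/2+\epsilon}\,o_{P'}(1)$, which is only automatically $o_{P'}(1)$ when $\gamma_-+1/2+\epsilon<0$. To cover the case $\gamma_-+1/2+\epsilon>0$ (large $\epsilon$), simply apply Lemma~\ref{l:mm01} at $s=1/k$ with a smaller auxiliary $\epsilon'\in(0,-\gamma_--1/2)$ so that the constant-in-$s$ remainder is genuinely $o_{P'}(1)$; then multiplying by the bounded weight $\sup_{s\in C_n}s^{\gamma_-+1/2+\epsilon}$ preserves negligibility. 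This is a bookkeeping detail and does not affect the structure of your argument.
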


\begin{lemma}
\label{l:mm02}
Suppose that the conditions of Theorem \ref{th:mm_gamma_boot} are satisfied.
 Then, for  $\epsilon >0$, we have
\begin{equation}
\begin{aligned}
   k^{1/2} & \frac{ \log X_{n-[ks]+1,n}  - \log X_{n-k,n}} {q_0(\frac{n}{k})}   -  k^{1/2} \frac{s^{-\gamma_-}-1}{\gamma_-}   - \frac{\tilde{W}_{k,n}(s)}{s^{\gamma_- + 1}}  + \tilde{W}_{k,n}(1) \\ &
\qquad
= {(1+s^{-(\gamma_-+ 1/2+\epsilon)})} o_{P'}(1),
\end{aligned}
 \label{eq:mm l2.1}
\end{equation}
\begin{equation}
\label{eq:mm l2.2}
\begin{aligned}
 k^{1/2} & \frac{ \log X_{n-[ks]+1,n}^*  -\log X_{n-k,n}^*} {q_0(\frac{n}{k})}   -  k^{1/2} \frac{s^{-\gamma_-}-1}{\gamma_-} - \frac{\tilde{W}_{k,n}(s) + B_{k,n}'(s)}{s^{\gamma_-+1}}   \\ &
\qquad  + \tilde{W}_{k,n}(1) + B'_{k,n}(1)  = { (1+s^{-(\gamma_-+ 1/2+\epsilon)}) } o_{P'}(1),
\end{aligned}
\end{equation}
\begin{equation}
\label{eq:mm l2.3}
\begin{aligned}
  k^{1/2} & \frac{ \log X_{n-[ks]+1,n}^*  -\log X_{n-k,n}^*-    (\log X_{n-[ks]+1,n}  - \log X_{n-k,n})} {q_0(\frac{n}{k})} - \frac{B_{k,n}'(s)}{s^{\gamma_-+1}} \\ &
\qquad {  + B'_{k,n}(1) }
 = { (1+s^{-(\gamma_-+ 1/2+\epsilon)})} o_{P'}(1)
\end{aligned}
\end{equation}
where $o_{P'}(1)$ in (\ref{eq:mm l2.1}), (\ref{eq:mm l2.2}), and (\ref{eq:mm l2.3})
do not depend on $s$ ($0 < ks < k+1$).
\end{lemma}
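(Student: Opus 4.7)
The plan is to obtain all three displays by subtracting the $s=1$ case from the general case in Lemma \ref{l:mm01a} (and its bootstrap analog), and then differencing across the two samples. The key observations are that (i) the unknown centering constants $b_0(n)$ and $b_0^*(n)$ cancel in the subtraction, and (ii) the deterministic term $k^{1/2}(s^{-\gamma_-}-1)/\gamma_-$ vanishes at $s=1$, so its $s=1$ value contributes nothing.

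First I would prove (\ref{eq:mm l2.1}). Equation (\ref{eq:mm c1.1}) from Lemma \ref{l:mm01a} gives, after dividing the supremum out, the pointwise bound
\begin{equation*}
k^{1/2}\frac{\log X_{n-[ks]+1,n}-b_0(n)}{q_0(n/k)} = k^{1/2}\frac{s^{-\gamma_-}-1}{\gamma_-} + \frac{\tilde W_{k,n}(s)}{s^{\gamma_-+1}} + s^{-(\gamma_-+1/2+\epsilon)} R_n,
\end{equation*}
where $R_n = o_{P'}(1)$ does not depend on $s \in (0,(k+1)/k)$. Specializing to $s=1$ yields
\begin{equation*}
k^{1/2}\frac{\log X_{n-k,n}-b_0(n)}{q_0(n/k)} = \tilde W_{k,n}(1) + R_n.
\end{equation*}
Subtracting the second from the first cancels $b_0(n)$ and gives (\ref{eq:mm l2.1}), with the remainder bounded by $(1+s^{-(\gamma_-+1/2+\epsilon)})R_n$.

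Next I would derive (\ref{eq:mm l2.2}) by applying the very same subtraction to (\ref{eq:mm c1.2}), the bootstrap version. The centering $b_0^*(n)$ now cancels, and the value of the approximating process at $s=1$ is $\tilde W_{k,n}(1) + B'_{k,n}(1)$, which produces exactly the additive terms appearing in (\ref{eq:mm l2.2}). The remainder is controlled by the same $(1+s^{-(\gamma_-+1/2+\epsilon)}) o_{P'}(1)$ envelope. Finally, (\ref{eq:mm l2.3}) is obtained by subtracting (\ref{eq:mm l2.1}) from (\ref{eq:mm l2.2}); the deterministic term $k^{1/2}(s^{-\gamma_-}-1)/\gamma_-$, the process $\tilde W_{k,n}(s)/s^{\gamma_-+1}$, and $\tilde W_{k,n}(1)$ all cancel, leaving only the bootstrap bridge terms $B'_{k,n}(s)/s^{\gamma_-+1}$ and $B'_{k,n}(1)$.

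I do not anticipate a substantive obstacle. The only subtle bookkeeping point, and the reason the remainder has the additive form $1+s^{-(\gamma_-+1/2+\epsilon)}$ rather than the purely multiplicative $s^{-(\gamma_-+1/2+\epsilon)}$, is that at $s=1$ the weight $s^{\gamma_-+1/2+\epsilon}$ equals one, so the error from the $s=1$ evaluation contributes a pure $o_{P'}(1)$ term; this must be carried separately and added to the weight-inflated remainder from the general $s$. Once this is noted, the rest is routine algebra and the argument produces all three displays simultaneously.
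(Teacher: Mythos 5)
Your proposal is correct and is essentially the paper's own proof: evaluate Lemma \ref{l:mm01a} at $s=1$ to eliminate the centering constants $b_0(n)$ and $b_0^*(n)$, substitute back into the general-$s$ statement, and subtract the two resulting displays to obtain the third. The one point where the paper is more careful is the evaluation at $s=1$: since $[\cdot]$ here denotes the ceiling, direct substitution of $s=1$ gives $X_{n-[k]+1,n}=X_{n-k+1,n}$ rather than $X_{n-k,n}$, so the paper instead restricts to $k<ks<k+1$ (where $X_{n-[ks]+1,n}=X_{n-k,n}$) and passes to the endpoint by a continuity argument — a minor bookkeeping step you should add to your $s=1$ specialization.
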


\begin{proof}
It follows from (\ref{eq:mm c1.1}) that
\begin{equation}
\sup_{k < ks < k+1}  s^{\gamma_-+ 1/2+\epsilon} \left| k^{1/2} \frac{ \log X_{n-k,n}  - b_0(n)} {q_0(\frac{n}{k})}
- k^{1/2}\frac{s^{-\gamma_-}-1}{\gamma_-}   - \frac{\tilde{W}_{k,n}(s)}{s^{\gamma_-+1}}
  \right| =  o_{P'}(1),
\label{eq:mm c1.1b}
\end{equation}
By a continuity argument similar to that for (\ref{lem-2.4.10}),
(\ref{eq:mm c1.1b}) also holds with $\sup_{k < ks < k+1}$ replaced by $\sup_{k \leq ks < k+1}$
and hence at $s=1$,
%
%
from which  we obtain
$$    k^{1/2} b_0(n)/{q_0(\frac{n}{k})}  =  k^{1/2} \log X_{n-k,n}/{q_0(\frac{n}{k})}
-  \tilde{W}_{k,n}(1) +  o_{P'}(1).$$
Substituting this in ( \ref{eq:mm c1.1}) leads to
(\ref{eq:mm l2.1}).
By a similar argument applied to (\ref{eq:mm c1.2}), we have
\begin{equation}
 k^{1/2} \{ \log X_{n-k,n}^*  - b_0^*(n)\}/ {q_0(\frac{n}{k})}  - \{\tilde{W}_{k,n}(1)+B_{k,n}'(1)\} =  o_{P'}(1).
 \label{eq:mm c1.1e}
 \end{equation}
Substitution of this in (\ref{eq:mm c1.2}) leads to (\ref{eq:mm l2.2}).
The third part  follows from the first two.
\end{proof}



The asymptotic distributions of various tail statistics turn out to be functionals of a Wiener process.
Let $W(s)$, $0\leq s \leq 1$, denote a standard Wiener process, and let
us define the following random variables that are functionals of the same $W$:
\begin{eqnarray}
 &&Q =  2\int_0^1 \frac{s^{-\gamma}-1}{\gamma_{-}}\left(\frac{W(s)}{s^{\gamma_-+1}}-W(1) \right)ds,
\qquad \qquad \qquad  P = \int_0^1 \frac{W(s)}{s^{\gamma_-+1}}-W(1) ds,  
    \label{eq-funtional-Q}\\
&& R =  \left(1-\gamma_{-}\right)^2(1-2 \gamma_{-}) \left( 2^{-1}{(1-2 \gamma_{-})}{Q}-2{P} \right),
 \qquad \qquad  {\Gamma} = {\gamma_+} {P} + {R}
\label{eq-tildeR_n} \\
 && A =  \gamma_+ W(1) + (1-\gamma_{-})(3-4 \gamma_{-})P - 2^{-1}(1-\gamma_{-})(1-2 \gamma_{-})^2Q
\label{eq-funtional-}
\end{eqnarray}
These random variables will appear in the limiting distributions of various statistics.
In finite samples, the following random variables that correspond these will appear:
Let $(P_n', Q_n', A_n', \Gamma_n')$ be defined in the same way except that the standard Wiener process $W$ is replaced by $B_{k,n}'$.
Similarly, let $(\tilde{P}_n, \tilde{Q}_n, \tilde{A}_n, \tilde{W}_n)$ be defined in the same way except that $W$ is replaced by $\tilde{W}_{k,n}$, where
$\tilde{W}_{k,n}(s) = (n/k)^{1/2}B_n(1- ks/n)$.

%
Since $\tilde{W}_{k,n}(s)$
and $B_n'(ks/n)$ are equal in distribution, it follows that
$\tilde{P}_n \stackrel{d}{=} P_n'$, but they are not the same random variables.
Similar comments apply to the other pairs of random variables of the form
 $(\tilde{Y}_{k,n}, Y_{k,n}')$,  with different symbols for $Y$, that will appear in the derivations.

\begin{lemma}
\label{l:mm03}
\begin{eqnarray}
 k^{1/2}\left\{  {H_n}/{q_0(n/k)} - (1-\gamma_-)^{-1} \right \} &=& \tilde{P}_n +o_{P'}(1),
 \label{eq:mm l3.1}\\
 k^{1/2}\left\{  {H_n^*}/{q_0(n/k)} - (1-\gamma_-)^{-1} \right \} &=& (P_n'+ \tilde{P}_n) +o_{P'}(1),
 \label{eq:mm l3.2} \\
 k^{1/2}\left\{  (H_n^*-H_n)/q_0(n/k) \right \} &=&  P_n' +o_{P'}(1).
 \label{eq:mm l3.3}
\end{eqnarray}
%
\end{lemma}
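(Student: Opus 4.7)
The plan is to express $H_n$, $H_n^*$, and their difference as integrals over $s \in (0,1]$ of the log-spacings that appear in Lemma \ref{l:mm02}, and then to substitute the uniform stochastic expansions from that lemma. Since the bracket $[x]$ denotes the ceiling, the map $s \mapsto \log X_{n-[ks]+1,n}$ is piecewise constant on the intervals $((i-1)/k, i/k]$, so
$$\frac{H_n}{q_0(n/k)} = \int_0^1 \frac{\log X_{n-[ks]+1,n} - \log X_{n-k,n}}{q_0(n/k)}\, ds,$$
with the analogous identities for $H_n^*/q_0(n/k)$ and $(H_n^*-H_n)/q_0(n/k)$.

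The deterministic centering follows from the elementary identity $\int_0^1 (s^{-\gamma_-}-1)/\gamma_-\, ds = 1/(1-\gamma_-)$ (interpreted by continuity when $\gamma_-=0$, which requires $\gamma_- < 1$, automatic since $\gamma_- \le 0$). Multiplying the representation for $H_n/q_0(n/k)$ by $k^{1/2}$ and invoking (\ref{eq:mm l2.1}), the integrand decomposes into the deterministic piece $k^{1/2}(s^{-\gamma_-}-1)/\gamma_-$, the stochastic piece $\tilde W_{k,n}(s)/s^{\gamma_-+1} - \tilde W_{k,n}(1)$, and an $(1+s^{-(\gamma_-+1/2+\epsilon)})\,o_{P'}(1)$ remainder. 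Integrating term by term, the main stochastic contribution is exactly $\tilde P_n$ by the definition following (\ref{eq-funtional-}), which establishes (\ref{eq:mm l3.1}). The identical substitution of (\ref{eq:mm l2.2}) produces the main term $\tilde P_n + P_n'$ and hence (\ref{eq:mm l3.2}); either subtracting the two results or integrating (\ref{eq:mm l2.3}) directly yields (\ref{eq:mm l3.3}).

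The only nontrivial step is controlling the integrated remainder. Because the $o_{P'}(1)$ in Lemma \ref{l:mm02} does not depend on $s$, it factors out and the residual is bounded by
$$o_{P'}(1)\int_0^1 \left(1 + s^{-(\gamma_-+1/2+\epsilon)}\right) ds.$$
Choosing $0 < \epsilon < 1/2 - \gamma_-$, which is admissible because $\gamma_- \le 0$, the weight $s^{-(\gamma_-+1/2+\epsilon)}$ is integrable on $(0,1]$, so the residual is $o_{P'}(1)$.

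The main obstacle is essentially just this integrability check for the weight near the origin; beyond it, the lemma is a direct consequence of the pointwise expansions in Lemma \ref{l:mm02} combined with the piecewise-constant-to-integral identification of $H_n$ and $H_n^*$. A minor bookkeeping point is that the $s=1$ endpoint poses no issue, since the continuity arguments of the type used to upgrade (\ref{eq-2.4.10-b}) to (\ref{eq-Propo-6-2-c}) already ensure the expansions in Lemma \ref{l:mm02} hold through $s=1$.
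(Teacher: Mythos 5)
Your proposal is correct and follows the same route as the paper, which simply derives (\ref{eq:mm l3.1}) by integrating the expansion (\ref{eq:mm l2.1}) over $s\in(0,1]$ (using the piecewise-constant representation of $H_n$ and $\int_0^1 \gamma_-^{-1}(s^{-\gamma_-}-1)\,ds=(1-\gamma_-)^{-1}$), and obtains (\ref{eq:mm l3.2}) and (\ref{eq:mm l3.3}) analogously from (\ref{eq:mm l2.2}) and by subtraction. Your integrability check on the remainder weight, valid since $\gamma_-\le 0$ allows $\epsilon<1/2-\gamma_-$, is exactly the detail the paper leaves implicit.
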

\begin{proof}
The proof of (\ref{eq:mm l3.1}) follows from (\ref{eq:mm l2.1}); (\ref{eq:mm l3.2}) follows by similar arguments from (\ref{eq:mm l2.2}).
Finally, (\ref{eq:mm l3.3}) follows from the previous two parts.
\end{proof}

\begin{lemma}
\label{l:mm04}
\begin{equation}
 k^{1/2}(H_n - \gamma_+) = \gamma_+ \tilde{P}_n +o_{P'}(1), \qquad
  k^{1/2}(H_n^* - H_n ) = \gamma_+ P_n' +o_{P'}(1).
 \label{eq:mm l4.1}
\end{equation}
\end{lemma}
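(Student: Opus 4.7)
The plan is to derive both identities by rescaling Lemma \ref{l:mm03} by $k^{1/2}$ and then appealing to two asymptotic properties of the auxiliary function $q_0$ that already underlie Lemma \ref{l:mm01}: (a) $q_0(n/k)\to \gamma_+$ as $n\to\infty$, and (b) $k^{1/2}\bigl\{(1-\gamma_-)^{-1}q_0(n/k) - \gamma_+\bigr\} = o(1)$. Both hold under the hypotheses of Theorem \ref{th:mm_gamma_boot}: Corollary 2.3.5 and Theorem 2.3.6 of \cite{Ha:Ferr:2006} give $(1-\gamma_-)^{-1}q_0(n/k) \to \gamma_+$, and the rate $o(k^{-1/2})$ is supplied by the standard no-asymptotic-bias assumption $k^{1/2}|A_0(n/k)| = o(1)$ built into the main theorem (the same condition invoked at the end of the proof of Lemma \ref{l:r_3}).

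For the second identity, I would multiply (\ref{eq:mm l3.3}) by $k^{1/2}$ to obtain
\[
k^{1/2}(H_n^* - H_n) = q_0(n/k)\,P_n' + o_{P'}(q_0(n/k)).
\]
Since $P_n' = O_{P'}(1)$, a routine consequence of Lemma \ref{l:r_0} applied to the integrand defining $P_n'$ (the integrability near $0$ being handled by $\sup_{s}s^{-\nu}|B_{k,n}'(s)|=O_{P'}(1)$ for a suitable $\nu\in(\gamma_-,1/2)$), fact (a) converts the main term to $\gamma_+ P_n' + o_{P'}(1)$, and the residual is $o_{P'}(1)$ because $q_0(n/k)$ is bounded.

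For the first identity, I would rewrite (\ref{eq:mm l3.1}) as
\[
k^{1/2}(H_n - \gamma_+) = k^{1/2}\bigl\{(1-\gamma_-)^{-1}q_0(n/k) - \gamma_+\bigr\} + q_0(n/k)\,\tilde{P}_n + o_{P'}(q_0(n/k)).
\]
Fact (b) forces the leading bracket to be $o(1)$; fact (a) together with $\tilde{P}_n = O_{P'}(1)$ (again via Lemma \ref{l:r_0}) converts $q_0(n/k)\tilde{P}_n$ to $\gamma_+ \tilde{P}_n + o_{P'}(1)$; and the closing remainder is $o_{P'}(1)$ by boundedness of $q_0(n/k)$. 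The only genuinely subtle point — and the main obstacle — is (b), since the limits of $q_0(n/k)$ itself differ between $\gamma>0$ (where $q_0(n/k)\to\gamma$) and $\gamma\leq 0$ (where $q_0(n/k)\to 0$); in both cases, however, the required $o(k^{-1/2})$ bound reduces, via the second-order extended regular variation framework of \cite{Ha:Ferr:2006}, to the condition $k^{1/2}A_0(n/k) = o(1)$ already imposed in Theorem \ref{th:mm_gamma_boot}. Once (b) is in hand, the rest of the derivation is purely algebraic.
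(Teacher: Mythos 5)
Your proof is correct and follows essentially the same route as the paper: the second identity is obtained exactly as in the paper by multiplying (\ref{eq:mm l3.3}) through by $q_0(n/k)$ and using $q_0(n/k)-\gamma_+ = O(Q(n/k)) = o(k^{-1/2})$ (the paper phrases the no-bias condition via $k^{1/2}Q(n/k)=o(1)$ from Assumption (A.4) rather than via $A_0$), together with $P_n'=O_{P'}(1)$. For the first identity the paper simply cites page 109 of \cite{Ha:Ferr:2006}, and your derivation from (\ref{eq:mm l3.1}) is the natural unpacking of that citation, resting on the same bias bound, since $(1-\gamma_-)^{-1}q_0(n/k)-\gamma_+=(1-\gamma_-)^{-1}\{q_0(n/k)-\gamma_+\}$ because $\gamma_+\gamma_-=0$.
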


\begin{proof}
A proof of the first part of (\ref{eq:mm l4.1})  is given on page 109 in \cite{Ha:Ferr:2006}.
To  prove the second part, it follows from (\ref{eq:mm l3.3}) that
 $k^{1/2}(  H_n^*-H_n ) =  q_0(n/k)[P_n' + o_{P'}(1)]$.
%
Since $\{q_0(t) - \gamma_+\}/{Q(t)} = O(1)$ as $t \rightarrow \infty$ [see \citealt{Ha:Ferr:2006}, page 109], it follows from $k^{1/2}Q(t) = o(1)$ (see Assumption (A.4)) that
$\{q_0(n/k) - \gamma_+\} = \big[ \{q_0(n/k) - \gamma_+\}/{Q(n/k)}\big] Q(n/k)= O(1)O(k^{-1/2}) = o(1)$.
Now, the proof follows easily.
%
\end{proof}


\begin{lemma}
\label{l:mm05}
\begin{equation}
 k^{1/2}\left(  \left(\frac{H_n}{q_0(n/k)}\right)^2 - \left(\frac{1}{1-\gamma_-}\right)^2 \right ) = \frac{2}{1-\gamma_-} \tilde{P}_n + o_{P'}(1),
 \label{eq:mm l5.1}
\end{equation}
\begin{equation}
 k^{1/2}\left(  \frac{(H_n^*)^2-H_n^2}{q_0^2(n/k)} \right ) = \frac{2}{1-\gamma_-} P_n' + o_{P'}(1).
 \label{eq:mm l5.3}
\end{equation}
\end{lemma}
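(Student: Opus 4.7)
The plan is to exploit the difference-of-squares factorization $a^2 - b^2 = (a-b)(a+b)$ and combine it with Lemma \ref{l:mm03}, which already gives first-order linearizations for $H_n/q_0(n/k)$ and for $(H_n^* - H_n)/q_0(n/k)$. Both identities in Lemma \ref{l:mm05} are essentially delta-method style corollaries of Lemma \ref{l:mm03}, so no fresh probabilistic input should be required; the only bookkeeping is to check that the ``second factor'' converges to the deterministic constant $2/(1-\gamma_-)$ in $P'$-probability.

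For (\ref{eq:mm l5.1}), I would write
$$
 k^{1/2}\left\{\left(\tfrac{H_n}{q_0(n/k)}\right)^2-\left(\tfrac{1}{1-\gamma_-}\right)^2\right\}
 = k^{1/2}\left\{\tfrac{H_n}{q_0(n/k)}-\tfrac{1}{1-\gamma_-}\right\}\left\{\tfrac{H_n}{q_0(n/k)}+\tfrac{1}{1-\gamma_-}\right\}.
$$
By (\ref{eq:mm l3.1}), the first factor equals $\tilde{P}_n+o_{P'}(1)$, and $\tilde{P}_n = O_{P'}(1)$ because $\tilde{W}_{k,n}$ has the same distribution as $B_{k,n}'$, which is controlled by Lemma \ref{l:r_0}. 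The same equation (\ref{eq:mm l3.1}) implies ${H_n}/{q_0(n/k)} = (1-\gamma_-)^{-1} + o_{P'}(1)$, so the second factor equals $2/(1-\gamma_-) + o_{P'}(1)$. Multiplying yields (\ref{eq:mm l5.1}).

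For (\ref{eq:mm l5.3}), the analogous factorization gives
$$
 k^{1/2}\,\frac{(H_n^*)^2-H_n^2}{q_0^2(n/k)}
 = k^{1/2}\,\frac{H_n^*-H_n}{q_0(n/k)}\cdot\frac{H_n^*+H_n}{q_0(n/k)}.
$$
By (\ref{eq:mm l3.3}) the first factor equals $P_n' + o_{P'}(1)$, and by (\ref{eq:mm l3.1})--(\ref{eq:mm l3.2}) together, both $H_n/q_0(n/k)$ and $H_n^*/q_0(n/k)$ tend to $(1-\gamma_-)^{-1}$ in $P'$-probability, so the second factor equals $2/(1-\gamma_-) + o_{P'}(1)$. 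The product gives (\ref{eq:mm l5.3}).

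There is no genuine obstacle: the argument is a two-line delta-method computation. The only point that warrants explicit mention is that $\tilde{P}_n$ and $P_n'$ are $O_{P'}(1)$, which is why the $o_{P'}(1)$ remainder terms absorb the cross products; this follows from Lemma \ref{l:r_0} applied to $\tilde{W}_{k,n}$ and $B_{k,n}'$, combined with integrability of $(s^{-\gamma}-1)/\gamma_-$ against $s^{-\nu-1/2}$ for a suitable choice of $\nu$ in the defining integral (\ref{eq-funtional-Q}).
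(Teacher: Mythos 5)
Your proposal is correct and follows essentially the same route as the paper: the paper deduces both identities from Lemma \ref{l:mm03} via the mean value theorem applied to $x\mapsto x^2$, which for a quadratic is the same delta-method computation as your exact factorization $a^2-b^2=(a-b)(a+b)$. Your explicit remark that $\tilde{P}_n$ and $P_n'$ are $O_{P'}(1)$ (so the cross terms are absorbed into $o_{P'}(1)$) is a detail the paper leaves implicit, and it is justified as you say.
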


\begin{proof}
Proof of (\ref{eq:mm l5.1}) follows from (\ref{eq:mm l3.1}) by an application of mean value theorem;
similarly, (\ref{eq:mm l3.3}) follows from (\ref{eq:mm l3.3}).
By similar arguments, we may also show the following, but it is not directly required:
\begin{equation}
 k^{1/2}\left(  \left(\frac{H_n^*}{q_0(n/k)}\right)^2 - \left(\frac{1}{1-\gamma_-}\right)^2 \right ) = \frac{2}{1-\gamma_-} (\tilde{P}_n +P_n') + o_{P'}(1),
 \label{eq:mm l5.2}
\end{equation}
\end{proof}


%
%

Let
\begin{equation}
\lambda_1 = \frac{1}{1-\gamma_{-}}, \qquad
\lambda_2 = \frac{2}{\left(1-\gamma_{-}\right)\left(1-2 \gamma_{-}\right)}.
\label{eq-48.1}
\end{equation}

\begin{lemma}
\label{l:mm06}
\begin{equation}
 k^{1/2}\left( \{q_{0}\left({n}/{k}\right\}^{-2}{M_{n}} - \lambda_2 \right) - \tilde{Q}_n = o_{P'}(1),
 \label{eq:mm l6.1}
\end{equation}
\begin{equation}
 k^{1/2}\left(\{q_{0}\left({n}/{k}\right\}^{-2} {M_{n}^*} -  \lambda_2 \right) - (\tilde{Q}_n + Q_n')= o_{P'}(1),
 \label{eq:mm l6.2}
\end{equation}
\begin{equation}
 k^{1/2} \{q_{0}\left({n}/{k}\right\}^{-2} (M_{n}^*-M_n)- Q_n' = o_{P'}(1).
 \label{eq:mm l6.3}
\end{equation}
\end{lemma}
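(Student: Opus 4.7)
The plan is to prove all three statements by squaring the uniform expansion from Lemma \ref{l:mm02} and then integrating, treating $M_n$ and $M_n^*$ as step-function integrals on $[0,1]$.

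First, I would write
\[
\frac{M_n}{q_0^{2}(n/k)} = \int_{0}^{1}\left\{\frac{\log X_{n-[ks]+1,n}-\log X_{n-k,n}}{q_0(n/k)}\right\}^{2}ds,
\]
since the integrand is constant on each interval $[i/k,(i+1)/k)$. By (\ref{eq:mm l2.1}),
\[
\frac{\log X_{n-[ks]+1,n}-\log X_{n-k,n}}{q_0(n/k)}=T(s)+k^{-1/2}Z_n(s)+k^{-1/2}R_n(s),
\]
where $T(s)=(s^{-\gamma_-}-1)/\gamma_-$, $Z_n(s)=\tilde{W}_{k,n}(s)/s^{\gamma_-+1}-\tilde{W}_{k,n}(1)$, and $R_n(s)=(1+s^{-(\gamma_-+1/2+\epsilon)})o_{P'}(1)$ with the $o_{P'}(1)$ factor independent of $s$. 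Squaring and integrating gives
\[
\frac{M_n}{q_0^{2}(n/k)}=\int_{0}^{1}T^{2}\,ds+2k^{-1/2}\int_{0}^{1}T\,(Z_n+R_n)\,ds+k^{-1}\int_{0}^{1}(Z_n+R_n)^{2}\,ds.
\]
Direct algebra (with L'H\^opital for the case $\gamma_-=0$) yields $\int_{0}^{1}T^{2}\,ds=\lambda_2$, while the definition (\ref{eq-funtional-Q}) of $\tilde{Q}_n$ gives $2\int_{0}^{1}T(s)Z_n(s)\,ds=\tilde{Q}_n$. Consequently, after multiplying by $k^{1/2}$,
\[
k^{1/2}\!\left\{\frac{M_n}{q_0^{2}(n/k)}-\lambda_2\right\}=\tilde{Q}_n+2\!\int_{0}^{1}T(s)R_n(s)\,ds+k^{-1/2}\!\int_{0}^{1}(Z_n+R_n)^{2}\,ds.
\]

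It remains to show that the last two terms are $o_{P'}(1)$. For the first, I would factor the uniform $o_{P'}(1)$ out of $R_n$ and observe that $|T(s)|(1+s^{-(\gamma_-+1/2+\epsilon)})$ is integrable on $(0,1]$ for sufficiently small $\epsilon>0$, so $\int_{0}^{1}T\,R_n\,ds=o_{P'}(1)$. The main obstacle is controlling $k^{-1/2}\int_{0}^{1}(Z_n+R_n)^{2}\,ds$, because when $\gamma_-=0$ neither $Z_n^{2}$ nor $R_n^{2}$ is integrable near $s=0$. I would overcome this by exploiting that $M_n$ is a finite sum: on $(0,1/k]$ the integrand equals the single value $\{\log X_{n,n}-\log X_{n-k,n}\}^{2}/q_0^{2}(n/k)$, which by evaluating the expansion (\ref{eq:mm l2.1}) at $s=1/k$ is $O_{P'}((\log k)^{2})$ when $\gamma_-=0$ and $O_{P'}(1)$ otherwise, contributing only $O_{P'}(k^{-1}(\log k)^{2})$ to the integral, negligible after multiplying by $k^{1/2}$. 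On $[1/k,1]$, the bound $\sup_{0\leq s\leq 1}s^{-\nu}|\tilde{W}_{k,n}(s)|=O_{P'}(1)$, valid by Lemma \ref{l:r_0} together with the fact that $B_n(t)$ and $B_n(1-t)$ have the same distribution, with $\nu=1/2-\eta$ for $\eta$ arbitrarily small, gives $\int_{1/k}^{1}Z_n(s)^{2}\,ds=O_{P'}(k^{2\eta})$ for any $\eta>0$. Hence $k^{-1/2}\int_{1/k}^{1}Z_n^{2}\,ds=O_{P'}(k^{-1/2+2\eta})=o_{P'}(1)$, and the same estimate handles $R_n^{2}$ and the cross term, completing the proof of (\ref{eq:mm l6.1}).

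Finally, (\ref{eq:mm l6.2}) follows by applying the identical argument to the bootstrap expansion (\ref{eq:mm l2.2}), whose stochastic contribution becomes $Z_n(s)+Z_n^{\prime}(s)$ with $Z_n^{\prime}(s)=B_{k,n}^{\prime}(s)/s^{\gamma_-+1}-B_{k,n}^{\prime}(1)$; the cross term with $T$ then integrates to $\tilde{Q}_n+Q_n^{\prime}$ by the definition of $Q_n^{\prime}$. Statement (\ref{eq:mm l6.3}) then follows immediately by subtracting (\ref{eq:mm l6.1}) from (\ref{eq:mm l6.2}).
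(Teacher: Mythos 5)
Your proposal is correct and follows essentially the same route as the paper: both start from the uniform expansion of Lemma \ref{l:mm02}, square and integrate to extract $\lambda_2$ and $\tilde{Q}_n$, note that the squared remainder (and $Z_n^2$ when $\gamma_-=0$) is not integrable near $s=0$, and fix this by isolating the contribution of the interval $(0,1/k]$ (i.e., the top order statistic) and bounding it separately. Your treatment of the $[1/k,1]$ piece via $\sup_s s^{-\nu}|\tilde{W}_{k,n}(s)|=O_{P'}(1)$ is in fact more explicit than the paper's, which leaves that step as "it may be verified."
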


\begin{proof}
Let $0< \epsilon <1/4$.
By (\ref{eq:mm l2.1}), for $0 <  s < 1$,
\begin{eqnarray}
&&\frac{  \log X_{n-[ks]+1,n}  - \log X_{n-k,n}}{q_o(n/k)} \notag \\
 &=&   \frac{s^{-\gamma_-}-1}{\gamma_-} + k^{-1/2}\left( \frac{\tilde{W}_{k,n}(s)}{s^{\gamma_-+1}}
 - \tilde{W}_{k,n}(1)\right)
  + k^{-1/2} {(1+s^{-1/2-\epsilon-\gamma_-})}o_{P'}(1) \notag \\
 &=&   f_{1n}(s) + f_{2n}(s) + f_{3n}(s), \quad \mbox{say}. \label{eq-6.72AA}
\end{eqnarray}
Then
\begin{eqnarray}
k^{1/2} \frac{M_n}{\left(q_o(n/k)\right)^2}
 &=&   \int_0^1 k^{1/2}\{f_{1n}(s) + f_{2n}(s) + f_{3n}(s) \}^2 \ ds. \label{eq-M-2}
\end{eqnarray}
Note that if $2\{(1/2)+\epsilon+\gamma_-\} \geq 1$ then
$\int_0^1 f_{3n}^2(s) \ ds $ is not finite.
Therefore, though the integral in (\ref{eq-M-2}) is finite,
 it is not possible to evaluate it by expanding the squared expression under the integral sign in (\ref{eq-M-2}) and
 evaluating the integral of each resulting term.
Therefore, we express $\int_0^1 $ as $\int_0^{1/k} + \int_{1/k}^1$, and evaluate the two integrals separately.
This is equivalent to evaluating the first term of $M_n$ and the rest separately.
Let
$k^{1/2}M_n /q_0^2(n/k) = A_1(k,s) + A_2(k,n)$ where
\begin{eqnarray}
A_1(k,n) &= &  \big[ k^{-1/4} \{\log X_{n,n}  - \log X_{n-k,n}\}/q_0(n/k) \big]^2\\
A_2(k,n) &=& \sum_{i=1}^{k-1} \big[ k^{-1/4} \{\log X_{n-i,n}  - \log X_{n-k,n}\}/q_0(n/k)\big]^2.
\end{eqnarray}
It follows from (\ref{eq-6.72AA}) with $s= 1/k$ that
\begin{eqnarray*}
A_1(k,n) &=&  \big[ k^{-1/4} f_{1n}(k^{-1}) + k^{-1/4} f_{2n}(k^{-1})+ k^{-1/4} f_{3n}(k^{-1})  \big]^2 = o_{P'}(1).
\end{eqnarray*}
Next, note that
$A_2(k,n) = \int_0^{1/k} k^{1/2}\{ f_{1n}(s) + f_{2n}(s) + f_{3n}(s) \}^2 \ ds = o_{P'}(1).$
Expanding the integrand and evaluating each resulting integral, it may be verified that
\begin{equation*}
k^{1/2} \frac{M_n}{\left(q_o(n/k)\right)^2} =  \frac{2k^{1/2}}{\left(1-\gamma_{-}\right)\left(1-2 \gamma_{-}\right)} +  2\int_{0}^1 \frac{s^{-\gamma}-1}{\gamma_{-}}\left(\frac{\tilde{W}_{k,n}(s)}{s^{\gamma_-+1}}-\tilde{W}_{k,n}(1) \right)ds +  o_{P'}(1).
\end{equation*}
This completes the proof of (\ref{eq:mm l6.1}).
The proof of (\ref{eq:mm l6.2}) is similar, and hence omitted.
The proof of (\ref{eq:mm l6.3}) follows by subtracting (\ref{eq:mm l6.1}) from (\ref{eq:mm l6.2}).
\end{proof}


\begin{lemma}
\label{l:mm07}
\begin{equation}
 k^{1/2} \left(\frac{H_n^2}{M_n} - \frac{\lambda_1^2}{\lambda_2}   \right) =  (1-2\gamma_-) \tilde{P}_n - \frac{\left(1-2 \gamma_{-}\right)^2}{4} \tilde{Q}_n + o_{P'}(1).
 \label{eq:mm l7.1}
\end{equation}
\begin{equation}
 k^{1/2} \left(\frac{(H_n^*)^2}{M_n^*}  - \frac{H_n^2}{M_n}\right) =  (1-2\gamma_-) P_n' - \frac{\left(1-2 \gamma_{-}\right)^2}{4} Q_n' + o_{P'}(1).
 \label{eq:mm l7.3}
\end{equation}
\end{lemma}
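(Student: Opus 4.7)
The plan is to prove both statements by a standard first-order Taylor (delta-method) expansion of the ratio $H_n^2/M_n$, using the stochastic expansions for the numerator and denominator already established in Lemmas~\ref{l:mm05} and~\ref{l:mm06}. Writing $a_n = (H_n/q_0(n/k))^2$ and $b_n = M_n/q_0^2(n/k)$, we have $H_n^2/M_n = a_n/b_n$, and since $\lambda_2 > 0$ and $b_n = \lambda_2 + o_{P'}(1)$, the denominator is bounded away from zero with $P'$-probability tending to $1$, so the expansion is valid.

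For (\ref{eq:mm l7.1}), from (\ref{eq:mm l5.1}) I get $k^{1/2}(a_n - \lambda_1^2) = 2\lambda_1 \tilde{P}_n + o_{P'}(1)$, and from (\ref{eq:mm l6.1}) I get $k^{1/2}(b_n - \lambda_2) = \tilde{Q}_n + o_{P'}(1)$. Then
\begin{equation*}
k^{1/2}\!\left(\frac{a_n}{b_n} - \frac{\lambda_1^2}{\lambda_2}\right)
= \frac{k^{1/2}(a_n - \lambda_1^2)}{b_n} - \frac{\lambda_1^2}{\lambda_2 b_n}\, k^{1/2}(b_n - \lambda_2)
= \frac{2\lambda_1}{\lambda_2}\tilde{P}_n - \frac{\lambda_1^2}{\lambda_2^2}\tilde{Q}_n + o_{P'}(1).
\end{equation*}
Substituting $\lambda_1 = 1/(1-\gamma_-)$ and $\lambda_2 = 2/\{(1-\gamma_-)(1-2\gamma_-)\}$, one computes $\lambda_1/\lambda_2 = (1-2\gamma_-)/2$, giving coefficients $(1-2\gamma_-)$ for $\tilde{P}_n$ and $-(1-2\gamma_-)^2/4$ for $\tilde{Q}_n$, which is exactly (\ref{eq:mm l7.1}).

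For (\ref{eq:mm l7.3}), I apply the identical argument to the bootstrap quantities $a_n^* = (H_n^*/q_0(n/k))^2$ and $b_n^* = M_n^*/q_0^2(n/k)$, using (\ref{eq:mm l5.2}) (which yields $k^{1/2}(a_n^* - \lambda_1^2) = 2\lambda_1(\tilde{P}_n + P_n') + o_{P'}(1)$) and (\ref{eq:mm l6.2}) (which yields $k^{1/2}(b_n^* - \lambda_2) = \tilde{Q}_n + Q_n' + o_{P'}(1)$). This produces
\begin{equation*}
k^{1/2}\!\left(\frac{(H_n^*)^2}{M_n^*} - \frac{\lambda_1^2}{\lambda_2}\right)
= (1-2\gamma_-)(\tilde{P}_n + P_n') - \frac{(1-2\gamma_-)^2}{4}(\tilde{Q}_n + Q_n') + o_{P'}(1).
\end{equation*}
Subtracting (\ref{eq:mm l7.1}) then cancels the $\tilde{P}_n$ and $\tilde{Q}_n$ terms and gives exactly (\ref{eq:mm l7.3}).

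There is no real obstacle here: once the ratio is split into numerator and denominator expansions, the delta method is essentially algebraic, and the only non-trivial check is that $b_n$ and $b_n^*$ stay bounded away from zero uniformly (with probability approaching $1$), which follows from $\lambda_2 > 0$ together with $k^{1/2}(b_n - \lambda_2) = O_{P'}(1)$ and analogously for $b_n^*$. The potentially delicate step---namely, that $k^{1/2}(M_n/q_0^2(n/k) - \lambda_2)$ has the explicit representation $\tilde{Q}_n + o_{P'}(1)$ despite the non-integrability obstacle noted in the proof of Lemma~\ref{l:mm06}---has already been resolved there, so the present lemma reduces entirely to the ratio expansion and the simplification $\lambda_1/\lambda_2 = (1-2\gamma_-)/2$.
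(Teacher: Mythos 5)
Your proof is correct and follows essentially the same route as the paper: the paper writes $k^{1/2}\bigl(H_n^2/M_n - \lambda_1^2/\lambda_2\bigr)$ as $\frac{\lambda_1^2}{\lambda_2}\frac{q_0^2(n/k)}{M_n}k^{1/2}\bigl(\lambda_2 - M_n/q_0^2(n/k)\bigr) + \frac{q_0^2(n/k)}{M_n}k^{1/2}\bigl(H_n^2/q_0^2(n/k) - \lambda_1^2\bigr)$, which is algebraically identical to your numerator/denominator decomposition, and then invokes (\ref{eq:mm l5.1}) and (\ref{eq:mm l6.1}) with the same simplification $2\lambda_1/\lambda_2 = 1-2\gamma_-$, $\lambda_1^2/\lambda_2^2 = (1-2\gamma_-)^2/4$. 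Your treatment of the second part (bootstrap expansion via (\ref{eq:mm l5.2}) and (\ref{eq:mm l6.2}) followed by subtraction) is a reasonable instantiation of what the paper dismisses as ``similar.''
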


\begin{proof}
Consider the left hand side of (\ref{eq:mm l7.1}):
\begin{eqnarray}
&&k^{1/2} \left(\frac{H_n^2}{M_n} - \frac{\lambda_1^2}{\lambda_2}  \right)
 =   \frac{\lambda_1^2}{\lambda_2}\frac{q_0^2(n/k)}{M_n} k^{1/2} \left( \lambda_2-\frac{ M_n } {q_0^2(n/k)} \right)  +   \frac{q_0^2(n/k)}{M_n} k^{1/2} \left( \frac {H_n^2  }{q_0^2(n/k)}-\lambda_1^2  \right) \label{eq-49.1}
\end{eqnarray}
The first part follows from  (\ref{eq:mm l6.1}) and (\ref{eq:mm l5.1}).
The proof of the second part is similar.
\end{proof}


%
%

\begin{lemma}
\label{l:mm09}
\begin{equation}
k^{1/2}(\hat{\gamma}_- - \gamma_-) =  \tilde{R}_n +o_{P'}(1), \qquad
k^{1/2}(\hat{\gamma}^*_- - \hat{\gamma}_-) =  R_n' +o_{P'}(1), \quad
k^{1/2}(\hat{\gamma}^* - \hat{\gamma}) =  \Gamma_n' +o_{P'}(1).
 \label{eq:mm l9.3}
\end{equation}
\end{lemma}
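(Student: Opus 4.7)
The plan is to obtain all three statements by a direct delta-method argument applied to the moment-estimator representation. Recall that the moment estimator of the extreme value index decomposes as $\hat{\gamma} = H_n + \hat{\gamma}_-$, where
$$\hat{\gamma}_- = 1 - \tfrac{1}{2}\bigl(1 - H_n^2/M_n\bigr)^{-1},$$
and analogously $\hat{\gamma}^* = H_n^* + \hat{\gamma}_-^*$ with $\hat{\gamma}_-^* = 1 - \tfrac{1}{2}(1 - (H_n^*)^2/M_n^*)^{-1}$. A straightforward computation using the definitions of $\lambda_1, \lambda_2$ in (\ref{eq-48.1}) shows $\phi(\lambda_1^2/\lambda_2) = \gamma_-$, where $\phi(x) = 1 - \tfrac{1}{2}(1-x)^{-1}$, and
$$\phi'(\lambda_1^2/\lambda_2) = -\tfrac{1}{2}\bigl(1 - \lambda_1^2/\lambda_2\bigr)^{-2} = -2(1-\gamma_-)^2.$$

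\textbf{First statement.} I would write $\hat{\gamma}_- - \gamma_- = \phi(H_n^2/M_n) - \phi(\lambda_1^2/\lambda_2)$, and apply a first-order Taylor expansion. Lemma \ref{l:mm07}, equation (\ref{eq:mm l7.1}), gives $k^{1/2}(H_n^2/M_n - \lambda_1^2/\lambda_2) = (1-2\gamma_-)\tilde{P}_n - \tfrac{(1-2\gamma_-)^2}{4}\tilde{Q}_n + o_{P'}(1)$, which together with $\phi'(\lambda_1^2/\lambda_2) = -2(1-\gamma_-)^2$ yields
$$k^{1/2}(\hat{\gamma}_- - \gamma_-) = (1-\gamma_-)^2(1-2\gamma_-)\bigl[\tfrac{1}{2}(1-2\gamma_-)\tilde{Q}_n - 2\tilde{P}_n\bigr] + o_{P'}(1) = \tilde{R}_n + o_{P'}(1)$$
by the definition of $\tilde{R}_n$ via (\ref{eq-tildeR_n}) with $W$ replaced by $\tilde{W}_{k,n}$. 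The remainder term is handled by noting that $H_n^2/M_n \to \lambda_1^2/\lambda_2$ in $P'$-probability (so the evaluation of $\phi'$ at an intermediate point converges to the nominal value), and $\phi$ is smooth away from $x=1$.

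\textbf{Second statement.} Apply the same Taylor expansion centered at $\lambda_1^2/\lambda_2$ to both $\hat{\gamma}_-^*$ and $\hat{\gamma}_-$, and subtract. The linearization gives
$$k^{1/2}(\hat{\gamma}_-^* - \hat{\gamma}_-) = \phi'(\lambda_1^2/\lambda_2)\,k^{1/2}\bigl((H_n^*)^2/M_n^* - H_n^2/M_n\bigr) + o_{P'}(1).$$
Invoking (\ref{eq:mm l7.3}) from Lemma \ref{l:mm07} and performing the same algebraic simplification as above produces $R_n' + o_{P'}(1)$.

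\textbf{Third statement.} Since $\hat{\gamma}^* - \hat{\gamma} = (H_n^* - H_n) + (\hat{\gamma}_-^* - \hat{\gamma}_-)$, I would combine the second part of (\ref{eq:mm l4.1}) from Lemma \ref{l:mm04}, which gives $k^{1/2}(H_n^* - H_n) = \gamma_+ P_n' + o_{P'}(1)$, with the second statement above to conclude
$$k^{1/2}(\hat{\gamma}^* - \hat{\gamma}) = \gamma_+ P_n' + R_n' + o_{P'}(1) = \Gamma_n' + o_{P'}(1),$$
using the definition of $\Gamma = \gamma_+ P + R$ in (\ref{eq-tildeR_n}) with $W$ replaced by $B_{k,n}'$.

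The only subtlety is justifying the Taylor remainder: this requires that $(H_n^*)^2/M_n^*$ and $H_n^2/M_n$ both lie in a neighborhood of $\lambda_1^2/\lambda_2$ (away from $x=1$) with $P'$-probability tending to one, which follows from Lemmas \ref{l:mm05} and \ref{l:mm06} since those give $k^{1/2}$-rate consistency. Beyond that caveat, the proof is a routine bookkeeping exercise on top of the lemmas already established.
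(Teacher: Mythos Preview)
Your proposal is correct and follows essentially the same route as the paper. The only cosmetic difference is that, for the first two parts, the paper exploits the exact algebraic identity
\[
\phi(a)-\phi(b)=\tfrac12\,\frac{b-a}{(1-a)(1-b)}
\]
for $\phi(x)=1-\tfrac12(1-x)^{-1}$, writing $k^{1/2}(\hat\gamma_--\gamma_-)=\tfrac12\,k^{1/2}\bigl(\lambda_1^2/\lambda_2-H_n^2/M_n\bigr)\,(1-\lambda_1^2/\lambda_2)^{-1}(1-H_n^2/M_n)^{-1}$ and then invoking (\ref{eq:mm l7.1}), rather than a Taylor expansion with intermediate point; the third part is identical to yours.
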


\noindent\textit{Proof}.
It follows from (\ref{eq-48.1}) that
$\gamma_- = 1 -  2^{-1}\{1-{\lambda_1^2}/{\lambda_2}\}.$
Then,  by the definition of $\hat{\gamma}_-$,
\begin{eqnarray}
k^{1/2}( \hat{\gamma}_- - \gamma_-) 
& = & 2^{-1}{k^{1/2}} \left( {\lambda_1^2}/{\lambda_2} -   {H_n^2}/{M_n} \right)
\left(  1-{\lambda_1^2}/{\lambda_2}  \right)^{-1}
 \left( 1-H_n^2/M_n \right)^{-1}. \label{eq-52.1}
\end{eqnarray}
Now, the first part of the lemma may be verified using (\ref{eq:mm l7.1}).
The proof of the second part is similar and hence omitted.
 Using the definitions of $\hat{\gamma}^*$ and $\hat{\gamma}$, and Lemmas \ref{l:mm09} and \ref{l:mm04}, we obtain
\begin{eqnarray*}
k^{1/2}(\hat{\gamma}^* - \hat{\gamma}) &=& k^{1/2}(\hat{\gamma}^*_- - \hat{\gamma}_-) + k^{1/2}(\hat{\gamma}^*_+ - \hat{\gamma}_+)
 =  R_n' + \gamma_+P_n' + o_{P'}(1)= \Gamma_n' + o_{P'}(1). \hfill \blacksquare
\end{eqnarray*}

\begin{lemma}
\label{l:mm11}
\begin{equation}
k^{1/2}\left\{{\hat{a}^*(n/k)}/{\hat{a}(n/k)} - 1\right\} =A_n' +o_{P'}(1).
 \label{eq:mm l11}
\end{equation}
\end{lemma}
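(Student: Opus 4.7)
The plan is to exploit the multiplicative factorization $\hat a(n/k) = X_{n-k,n} H_n (1-\hat\gamma_-)$ of the moment-estimator scale (with the analogous formula for $\hat a^*(n/k)$), write $\hat a^*/\hat a - 1$ as a sum of three bootstrap-perturbation terms, and substitute the asymptotic expansions already established.

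First I would set
$$R_1 = \frac{X_{n-k,n}^*-X_{n-k,n}}{X_{n-k,n}}, \qquad R_2 = \frac{H_n^*-H_n}{H_n}, \qquad R_3 = -\frac{\hat\gamma_-^*-\hat\gamma_-}{1-\hat\gamma_-},$$
so that $\hat a^*/\hat a - 1 = (1+R_1)(1+R_2)(1+R_3) - 1$. Lemmas \ref{l:r_3a}, \ref{l:mm03} and \ref{l:mm09} give $R_j = O_{P'}(k^{-1/2})$ for each $j$, so all cross products contribute $o_{P'}(k^{-1/2})$, leaving $k^{1/2}(\hat a^*/\hat a - 1) = k^{1/2}(R_1+R_2+R_3) + o_{P'}(1)$.

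Next I would evaluate each summand. Lemma \ref{l:r_3a} combined with the moment-estimator limit $\hat a(n/k)/X_{n-k,n} \xrightarrow{P'} \gamma_+$ (which in turn follows from $H_n \xrightarrow{P'} \gamma_+$ via Lemma \ref{l:mm04}, $\hat\gamma_- \xrightarrow{P'} \gamma_-$ via Lemma \ref{l:mm09}, and the elementary identity $\gamma_+(1-\gamma_-)=\gamma_+$) yields $k^{1/2}R_1 = \gamma_+ B_{k,n}'(1) + o_{P'}(1)$. Equation (\ref{eq:mm l3.3}) together with $H_n/q_0(n/k) \xrightarrow{P'} (1-\gamma_-)^{-1}$ from (\ref{eq:mm l3.1}) yields $k^{1/2}R_2 = (1-\gamma_-)P_n' + o_{P'}(1)$. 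Lemma \ref{l:mm09} together with $1-\hat\gamma_- \xrightarrow{P'} 1-\gamma_-$ yields $k^{1/2}R_3 = -R_n'/(1-\gamma_-) + o_{P'}(1)$.

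Finally, substituting $R_n' = (1-\gamma_-)^2(1-2\gamma_-)\{2^{-1}(1-2\gamma_-)Q_n' - 2P_n'\}$ from (\ref{eq-tildeR_n}) gives
$$-R_n'/(1-\gamma_-) = 2(1-\gamma_-)(1-2\gamma_-)P_n' - 2^{-1}(1-\gamma_-)(1-2\gamma_-)^2 Q_n'.$$
Summing the three contributions and collecting the coefficient of $P_n'$ as $(1-\gamma_-)\{1+2(1-2\gamma_-)\} = (1-\gamma_-)(3-4\gamma_-)$ recovers exactly the definition (\ref{eq-funtional-}) of $A_n'$, which proves the claim. The main obstacle is verifying $\hat a(n/k)/X_{n-k,n} \xrightarrow{P'} \gamma_+$ uniformly in the sign of $\gamma$, since the individual limits of $H_n$ and $1-\hat\gamma_-$ must collapse correctly through the identity $\gamma_+\gamma_-=0$; once this and the algebraic bookkeeping in the last step are in place, the remainder of the argument is a routine application of the preceding lemmas.
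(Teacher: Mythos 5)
Your proposal is correct and follows essentially the same route as the paper: the same factorization $\hat a^*/\hat a = (X^*_{n-k,n}/X_{n-k,n})(H_n^*/H_n)\{(1-\hat\gamma^*_-)/(1-\hat\gamma_-)\}$, the same three first-order expansions $\gamma_+B_{k,n}'(1)$, $(1-\gamma_-)P_n'$ and $-R_n'/(1-\gamma_-)$, and the same final algebra recovering $A_n'$. The only cosmetic difference is that you obtain the expansion of $X^*_{n-k,n}/X_{n-k,n}$ from Lemma \ref{l:r_3a} together with $\hat a(n/k)/X_{n-k,n}\to\gamma_+$, whereas the paper derives it from the log-spacing result (\ref{eq:mm l1.3}) and $q_0(n/k)=\gamma_++o(1)$; both are valid.
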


\begin{proof}
Using the definitions, we have
\begin{eqnarray}
k^{1/2}\left(\frac{\hat{a}^*(n/k)}{\hat{a}(n/k)} - 1\right) &=& k^{1/2}
\left( \frac{X_{n-k,n}^*}{X_{n-k,n}} \  \frac{H_{n}^*}{H_{n}} \ \frac{(1-\hat{\gamma}^*_-)}{(1-\hat{\gamma}_-)}  -1\right).
\label{eq-4.14.1a}
\end{eqnarray}

To prove the lemma, we substitute asymptotic representations for the
 three terms $  X_{n-k,n}^*/X_{n-k,n}$, $H_{n}^*/H_{n} $, and $ (1-\hat{\gamma}^*_-)/(1-\hat{\gamma}_-)$ in (\ref{eq-4.14.1a}).
%
By (\ref{eq:mm l1.3}) and an application of mean value theorem, we obtain
$k^{1/2} \{q_0({n}/{k})\}^{-1} \log \{X_{n-k,n}^*/X_{n-k,n} \}    =  B_{k,n}'(1) + o_{P'}(1).$
Since $q_0(n/k) = \gamma_+ +  o(1)$,
 it may be verified that ${X_{n-k,n}^*}/{X_{n-k,n}} = 1 + k^{-1/2}\{\gamma_+ B_{k,n}'(1) + o_{P'}(1)\}.$
Hence,
\begin{equation}
  \frac{X_{n-k,n}^*}{X_{n-k,n}} = 1 + k^{-1/2}\{\gamma_+ B_{k,n}'(1)  + o_{P'}(1)\}.
\label{eq:mm l11b}
\end{equation}
We adopt a similar procedure for  $\left(H_{n}^*/H_{n}\right)$.
First, we write
\begin{equation}
k^{1/2}\left(\frac{H_{n}^*}{H_{n}} -1 \right) = (1-\gamma_-)k^{1/2} \frac{H_{n}^*-H_{n}}{q_0(n/k)} \frac{q_0(n/k)}{(1-\gamma_-)H_n}.
\label{eq-14.1f}
\end{equation}
Then, it may be verified, using Lemma  \ref{l:mm03}, that
$ ({H_{n}^*}/{H_{n}}) = 1 + k^{-1/2}\{(1-\gamma_-)P_n'   + o_{P'}(1)\}.$
%
Using Lemma \ref{l:mm09}, we may verify that
%
$({1-\hat{\gamma}^*_-})/({1-\hat{\gamma}_-}) = 1 + k^{-1/2}(-R_n'(1-\gamma_-)^{-1}  + o_{P'}(1)).$
Now the lemma follows by substituting the foregoing asymptotic representations
 in  (\ref{eq-4.14.1a}).
\end{proof}

\subsection{Estimation of high quantile}

In this subsection we establish the main results for constructing a confidence interval for a high quantile.
Let $p_n$ be a given small number in the range $0 < p_n < 1$, and let
 $x(p_n) = F^{\leftarrow}(1-p_n)$ denote the unknown upper $p_n$th quantile of $F$.
Recall that $d_n = k/(np_n)$.
An estimator of $x(p_n)$ is
(see, \citealt{Ha:Ferr:2006},  Theorem 4.3.1 )
$$\hat{x}(p_n)  = \hat{b}\left({n}/{k}\right) + \hat{a}\left({n}/{k}  \right) \{d_n^{\hat\gamma}-1\}/{\hat\gamma};$$
the bootstrap counterpart of $\hat{x}(p_n) $ is
$\hat{x}^*(p_n)  = \hat{b}^*\left({n}/{k}\right) + \hat{a}^*\left({n}/{k}  \right) \{d_n^{\hat{\gamma}^*}-1\}/{\hat\gamma}^*.$
Let $ q_{\gamma}(t) = \int_1^t s^{\gamma-1}\log s\ ds,$  $( t>0).$
The asymptotic distribution of the bootstrap quantity $\hat{x}^*(p_n)$ is provided in the next lemma;
 it can be used for constructing a confidence interval for the high-quantile $x(p_n).$

\begin{lemma}
\label{l:mm12}
Suppose that the conditions of Theorem \ref{th:mm_x_boot} are satisfied.
Then,
\begin{eqnarray}
\frac{k^{1/2}}{q_{\hat\gamma}(d_n)}  \frac{\hat{x}^*(p_n)-\hat{x}(p_n)}{\hat{a}\left({n}/{k}  \right)}
 &=& \Gamma_n' + (\gamma_-)^2 B_{k,n}'(1) - (\gamma_-)A_n' +  o_{P'}(1).
\label{eq:r_bq_wc} \\
\frac{k^{1/2}}{q_{\hat{\gamma}^*}(d_n)}  \frac{\hat{x}^*(p_n)-\hat{x}(p_n)}{\hat{a}
\left({n}/{k}  \right)}
 &=&  \Gamma_n' + (\gamma_-)^2 B_{k,n}'(1) - (\gamma_-)A_n' +  o_{P'}(1).
\label{eq:r_bq_wc-2}
\end{eqnarray}
\end{lemma}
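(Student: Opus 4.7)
The plan is to decompose the bootstrap difference into three pieces, one for each estimated component (location, scale, and tail index), and apply to each the bootstrap representation lemma already established. Writing $\varphi(\gamma) = (d_n^\gamma - 1)/\gamma$, I add and subtract $\hat a(n/k)\varphi(\hat\gamma^*)$ to obtain
\begin{align*}
\hat x^*(p_n) - \hat x(p_n) &= \{\hat b^*(n/k) - \hat b(n/k)\} + \{\hat a^*(n/k) - \hat a(n/k)\}\varphi(\hat\gamma^*) \\
&\quad + \hat a(n/k)\{\varphi(\hat\gamma^*) - \varphi(\hat\gamma)\} =: T_1 + T_2 + T_3.
\end{align*}
I will then divide by $\hat a(n/k)\,q_{\hat\gamma}(d_n)\,k^{-1/2}$ and handle the three terms separately.

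For $T_1$, Lemma \ref{l:r_3a} gives $k^{1/2}T_1/\hat a(n/k) = B_{k,n}'(1) + o_{P'}(1)$. Integration by parts yields the closed form $q_\gamma(t) = t^\gamma \log t/\gamma - (t^\gamma-1)/\gamma^2$; hence $q_{\hat\gamma}(d_n) \to 1/\gamma^2$ when $\gamma < 0$, while $q_{\hat\gamma}(d_n) \to \infty$ (growing at least like $(\log d_n)^2/2$) when $\gamma \geq 0$. Both cases give $1/q_{\hat\gamma}(d_n) \to \gamma_-^2$ in probability, producing a $T_1$-contribution of $\gamma_-^2\, B_{k,n}'(1) + o_{P'}(1)$. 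For $T_2$, Lemma \ref{l:mm11} gives $k^{1/2}\{\hat a^*/\hat a - 1\} = A_n' + o_{P'}(1)$, and a parallel case analysis of the ratio $\varphi(\hat\gamma^*)/q_{\hat\gamma}(d_n)$ (which tends in probability to $-\gamma$ when $\gamma<0$, to $2/\log d_n\to 0$ when $\gamma=0$, and to $O(1/\log d_n)\to 0$ when $\gamma>0$) produces the term $-\gamma_-\, A_n' + o_{P'}(1)$.

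The core of the proof lies in $T_3$. Differentiating $\varphi(\gamma) = \int_0^{\log d_n} e^{\gamma u}\,du$ under the integral gives $\varphi'(\gamma) = q_\gamma(d_n)$, so the mean value theorem yields $T_3 = \hat a(n/k)\,q_{\tilde\gamma_n}(d_n)(\hat\gamma^* - \hat\gamma)$ for some $\tilde\gamma_n$ between $\hat\gamma$ and $\hat\gamma^*$, and Lemma \ref{l:mm09} converts this to
\[
\frac{k^{1/2}T_3}{\hat a(n/k)\,q_{\hat\gamma}(d_n)} = \frac{q_{\tilde\gamma_n}(d_n)}{q_{\hat\gamma}(d_n)}\,\{\Gamma_n' + o_{P'}(1)\}.
\]
The remaining, and main, obstacle is to prove $q_{\tilde\gamma_n}(d_n)/q_{\hat\gamma}(d_n) = 1 + o_{P'}(1)$. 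For $\gamma < 0$ this is immediate since both quantities converge to the constant $1/\gamma^2$. For $\gamma \geq 0$, the closed-form representation above reduces the claim to showing $(\tilde\gamma_n - \hat\gamma)\log d_n = o_{P'}(1)$, which follows from the hypothesis (inherited from the conditions of Theorem \ref{th:mm_x_boot}) that $\log d_n = o(k^{1/2})$ combined with $\tilde\gamma_n - \hat\gamma = O_{P'}(k^{-1/2})$. Summing the three contributions then gives (\ref{eq:r_bq_wc}).

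To obtain (\ref{eq:r_bq_wc-2}) from (\ref{eq:r_bq_wc}), I would apply the same ratio argument once more with $(\hat\gamma, \hat\gamma^*)$ in place of $(\hat\gamma, \tilde\gamma_n)$, which gives $q_{\hat\gamma^*}(d_n)/q_{\hat\gamma}(d_n) = 1 + o_{P'}(1)$, so the two normalizers are asymptotically equivalent. The principal technical difficulty throughout is the uniform control of the ratios $q_{\gamma + \varepsilon_n}(d_n)/q_\gamma(d_n)$ in the case $\gamma \geq 0$ with $d_n \to \infty$, which is precisely why the rate assumption $\log d_n = o(k^{1/2})$ is needed.
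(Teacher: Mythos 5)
Your proposal is correct and follows essentially the same route as the paper: the same three-term decomposition into location, scale, and index contributions, normalized by $q_{\hat\gamma}(d_n)\hat a(n/k)$, with Lemmas \ref{l:r_3a}, \ref{l:mm11}, and \ref{l:mm09} supplying the three asymptotic representations and the asymptotics (\ref{eq:mm_l12.1}) of $q_\gamma(t)$ and $(t^\gamma-1)/\gamma$ supplying the coefficients $\gamma_-^2$ and $-\gamma_-$. The only (minor, and arguably cleaner) variation is in the third term, where you apply the mean value theorem directly to $\gamma\mapsto(d_n^\gamma-1)/\gamma$ using $\varphi'(\gamma)=q_\gamma(d_n)$, whereas the paper manipulates the integral $\int_1^{d_n}s^{\hat\gamma-1}(s^{\hat\gamma^*-\hat\gamma}-1)\,ds$; both reduce to the same key estimate $(\hat\gamma^*-\hat\gamma)\log d_n=o_{P'}(1)$ coming from $k^{-1/2}\log(np_n)=o(1)$.
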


\begin{proof}
To simplify notation, we write $\hat{x}_n$ and
$\hat{x}^*_n$ for $\hat{x}^*(p_n)$ and  $\hat{x}(p_n)$ respectively;
this simplified notation is applicable only to this proof.
Starting from the definitions of these quantities, we have
\begin{eqnarray}
 k^{1/2}  \frac{\hat{x}_n^*-\hat{x}_n}{q_{\hat\gamma}(d_n)\hat{a}\left({n}/{k}  \right)}
&=&  k^{1/2}  \frac{\hat b^*\left({n}/{k}  \right)-\hat b\left({n}/{k}  \right)} {q_{\hat\gamma}(d_n)\hat{a}\left({n}/{k}  \right)}
+  k^{1/2} \left( \frac{\hat{a}^*\left({n}/{k}  \right) } {\hat{a}\left({n}/{k}  \right)}   -1  \right)  \frac{d_n^{\hat\gamma}-1}{\hat\gamma q_{\hat\gamma}(d_n)}  \notag\\
 + &&
\frac{k^{1/2}}{q_{\hat\gamma}(d_n)}  \frac{\hat{a}^*\left({n}/{k}  \right)}  {\hat{a}\left({n}/{k}  \right)}
  \left\{   ({d_n^{\hat\gamma^*}-1})/{\hat\gamma^*}
   -   ({d_n^{\hat\gamma}-1})/ {\hat\gamma} \right\}
  := A1+A2+A3,\label{eq-6.98A}
\end{eqnarray}
say.
The technical details when $\gamma_- =0$ and when $\gamma_- < 0$ are different, as mentioned earlier. For example, $(x^{\gamma} - 1)/\gamma$ needs to be interpreted as $\log x$ at  $\gamma=0.$
In this proof, we provide the details for the case $\gamma_- =0.$

It follows from
Remark 4.3.3 and (4.3.5) in \cite{Ha:Ferr:2006}, that
\begin{equation}
q_{\gamma}(t) \sim
\left\{ \begin{array}{ll}
\gamma^{-1} t^{\gamma}\log t & \\
       (\log t)^2/2, & \\
              \gamma^{-2}, & \\
\end{array}
\right.
\quad
\frac{t^{\gamma}-1}{\gamma} \sim
\left\{ \begin{array}{ll}
\gamma^{-1} t^{\gamma} & \\
       2 \log t, & \\
              -\gamma^{-1}, & \\
\end{array}
\right.
\quad
\frac{t^{\gamma}-1}{\gamma q_{\gamma}(t) } \sim
\left\{ \begin{array}{ll}
(\log t)^{-1}  & \gamma>0 \\
      2 (\log t)^{-1} , & \gamma=0 \\
              -\gamma, & \gamma<0 \\
\end{array}
\right.
\label{eq:mm_l12.1}
\end{equation}
as $t \rightarrow \infty$,
Then,
\[
\frac{d_n^{\hat\gamma}-1}{\hat\gamma q_{\hat\gamma}(d_n)} \sim  -\hat \gamma_- = -\gamma_- +  o_{P'}(1), \quad
\frac{1}{ q_{\hat\gamma}(d_n)} \sim (\hat \gamma_-)^2 =   (\gamma_-)^2+  o_{P'}(1).
\]
%
Substituting these into  A1 and A2 in (\ref{eq-6.98A}), we obtain
$A1=  (\gamma_-)^2 B_{k,n}'(1)  +  o_{P'}(1)$
by  Lemma \ref{l:r_3a},
and
$ A2 =  - (\gamma_-)A_n' +  o_{P'}(1) $
by  Lemma \ref{l:mm11}.

To evaluate $A3$, first notice that
$
({d_n^{\gamma}-1}){\gamma}^{-1} = \int_1^{d_n} s^{\gamma-1}ds.
$
Since
$k^{1/2}{\hat{a}^*(n/k)}/{\hat{a}(n/k)} = 1 + O_{P'}(1)$
by Lemma \ref{l:mm11}, we have
\begin{eqnarray*}
 A3 &=& \frac{k^{1/2}}{q_{\hat\gamma}(d_n)} \frac{\hat{a}^*\left(\frac{n}{k}  \right)}  {\hat{a}\left(\frac{n}{k}  \right)}        \left(   \frac{d_n^{\hat\gamma^*}-1}{\hat\gamma^*}   -   \frac{d_n^{\hat\gamma}-1}{\hat\gamma} \right)
 =    \frac{k^{1/2}}{q_{\hat\gamma}(d_n)} \int_1^{d_n} s^{\hat\gamma-1}(s^{\hat\gamma^*-\hat\gamma} - 1) d s \
+ o_{P'}(1).
\end{eqnarray*}
Let us ignore the $o_{P'}(1)$ for now and consider the other term.
By the mean value theorem,
$\log s^{\hat\gamma^*-\hat\gamma} - \log 1
 = \nu(s)^{-1}\{ s^{\hat\gamma^*-\hat\gamma} -1 \}$,
where 
$\nu(s)=s_1^{\hat\gamma^*-\hat\gamma} $ for some $s_1 \in[1,s]$.
Therefore,
\begin{eqnarray*}
 && {k^{1/2}}\{q_{\hat\gamma}(d_n)\}^{-1}\int_1^{d_n} s^{\hat\gamma-1}(s^{\hat\gamma^*-\hat\gamma} - 1) d s
 = k^{1/2}( \hat\gamma^*-\hat\gamma) + B_2
 %
\end{eqnarray*}
where $B_2 =  {k^{1/2}}\{q_{\hat\gamma}(d_n)\}^{-1}\int_1^{d_n} s^{\hat\gamma-1} \log s^{\hat\gamma^*-\hat\gamma}   (s_1^{\hat\gamma^*-\hat\gamma} -1)ds.$

Since $k^{-1/2}\log np_n =o(1)$ by assumption, we have
\begin{equation}
|\log d_n^{\hat\gamma^*-\hat\gamma}| =
 |(\hat\gamma^*-\hat\gamma) \log d_n|
 =(\hat\gamma^*-\hat\gamma)[\log k - \log np_n] = o_{P'}(1).
\label{eq-4.78c}
\end{equation}
Choose $n_0$ large enough such that $d_n >1$ for $n \geq n_0$.
Let $n > n_0$.
Then
\begin{eqnarray*}
|B_2|
& \leq &  |d_n^{\hat\gamma^*-\hat\gamma} -1| \frac{k^{1/2}}{q_{\hat\gamma}(d_n)}
 \int_1^{d_n} | s^{\hat\gamma-1} \log s^{\hat\gamma^*-\hat\gamma} |  ds
 =  |d_n^{\hat\gamma^*-\hat\gamma} -1| O_{P'}(1)\\
|\log d_n^{\hat\gamma^*-\hat\gamma}| &=&
 |(\hat\gamma^*-\hat\gamma) \log d_n|
 =(\hat\gamma^*-\hat\gamma)[\log k - \log np_n] = o_{P'}(1)
\end{eqnarray*}
since $k^{-1/2}\log np_n =o(1)$ by assumption.
Therefore, $B_2 = o_{P'}(1)$.
Then,
  $A_3 = k^{1/2}( \hat\gamma^*-\hat\gamma)+ o_{P'}(1) = \Gamma_n + o_{P'}(1)$ by (\ref{eq:mm l9.3}).

Since $q_{\hat{\gamma}^*}(d_n)/q_{\hat{\gamma}}(d_n) = 1 + o_{P'}(1)$
(see, \citealt{Ha:Ferr:2006}, Corollary 4.3.2, page135), and
$\hat{a}^*\left({n}/{k}\right)\{\hat{a}\left({n}/{k}\right)\}^{-1} = 1 + o_{P'}(1)$ by Lemma \ref{l:mm11},
using  (\ref{eq-6.98A}),  we have
\begin{eqnarray*}
 k^{1/2}  \frac{\hat{x}_n^*-\hat{x}_n}{q_{\hat\gamma}(d_n)\hat{a}\left({n}/{k}  \right)} & = &  A1+A2+A3 =
\Gamma_n' + (\gamma_-)^2 B_{k,n}'(1) - (\gamma_-)A_n' +  o_{P'}(1).
\end{eqnarray*}
\end{proof}

\subsection{Estimation of tail probability}
In this subsection, we prove the main results for constructing confidence intervals for an unknown
 tail probability.
Let $x_n$ be a given  sequence of real numbers.
The results in this subsection are for constructing a bootstrap confidence interval
for the unknown tail probability
$p_n := pr(X \geq x_n).$
In the proof of Lemma \ref{l:mm12}, we used the two quantities
$$\hat{x}_{p_n}  = \hat{b}(n/k) + \hat{a}(n/k) \{d_n^{\hat{\gamma}} - 1\}/\hat{\gamma}, \quad
\hat{x}_{p_n}^*  = \hat{b}^*(n/k) + \hat{a}^*(n/k) \{d_n^{\hat{\gamma}^*} - 1\}/\hat{\gamma}^*.$$
In the technical arguments provided below, we use these two quantities and
 results about their properties obtained in the proof of Lemma \ref{l:mm12}.
One difference worthy of noting is that,
 in the context of this subsection, $\hat{x}_{p_n}$ is an estimator of the known quantity $x_n$.
Therefore,  while we use $\hat{x}_{p_n}$ in the following technical details, it is used only to facilitate
 using the previously obtained results in the current proofs, not as an empirically useful statistic.

Recall that 
\begin{equation}
\hat{p}_{n}  = \frac{k}{n} \left(  \max \left\{  0, 1+\hat{\gamma} \frac{ x_{n}-\hat{b}\left(\frac{n}{k}\right)}{  \hat{a}\left(\frac{n}{k}  \right) } \right\} \right)^{-1/\hat{\gamma}}.
\label{eq-4.79a}
\end{equation}
and $w_{\gamma}(t) = t^{-\gamma}\int_1^t s^{\gamma-1}\log s \ ds$, $t>0$.

\begin{lemma}
\label{l:mm13}
Suppose that the conditions of Theorem \ref{th:mm_p_boot} are satisfied.
Then,
\begin{eqnarray}
\frac{k^{1/2}}{w_{\hat\gamma}(\hat{d}_n)} \left( \frac{\hat{p}_n^*}{\hat p_n}-1 \right)
 &=& \Gamma_n' + (\gamma_-)^2B_{k,n}'(1) - (\gamma_-)A_n' +  o_{P'}(1).
\label{eq:r_p_wcc} \\
\frac{k^{1/2}}{w_{\hat{\gamma}^*}(\hat{d}_n^*)} \left( \frac{\hat{p}_n^*}{\hat p_n}-1 \right)
 &=& \Gamma_n' + (\gamma_-)^2B_{k,n}'(1) - (\gamma_-)A_n' +  o_{P'}(1).
\label{eq:r_p_wcc-2}
\end{eqnarray}
\end{lemma}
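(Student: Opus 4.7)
My plan is to reduce Lemma \ref{l:mm13} to the already-established Lemma \ref{l:mm12} by exploiting the duality between the tail-probability estimator $\hat{p}_n$ and the high-quantile estimator. Write $\hat{x}(p) := \hat{b}(n/k) + \hat{a}(n/k)\{(k/(np))^{\hat\gamma} - 1\}/\hat\gamma$, and let $\hat{x}^*(p)$ be defined analogously with starred quantities. By (\ref{eq-4.79a}) and its bootstrap counterpart one has $\hat{x}(\hat{p}_n) = x_n = \hat{x}^*(\hat{p}_n^*)$, so that
\[
\hat{x}^*(\hat{p}_n) - \hat{x}(\hat{p}_n) \;=\; \hat{x}^*(\hat{p}_n) - \hat{x}^*(\hat{p}_n^*).
\]
The right-hand side will be linearised in $(\hat{p}_n - \hat{p}_n^*)$ by the mean value theorem, converting the tail-probability statement into a quantile statement to which Lemma \ref{l:mm12} applies.

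Differentiating gives $(d/dp)\hat{x}^*(p) = -\hat{a}^*(n/k)(k/(np))^{\hat\gamma^*}/p$, so there exists $\tilde{p}$ between $\hat{p}_n$ and $\hat{p}_n^*$, with $\tilde{d} := k/(n\tilde{p})$, such that
\[
\hat{x}^*(\hat{p}_n) - \hat{x}(\hat{p}_n) \;=\; (\hat{p}_n^* - \hat{p}_n)\,\hat{a}^*(n/k)\,\tilde{d}^{\hat\gamma^*}/\tilde{p}.
\]
A first-pass bound, obtained by combining this identity with $k^{1/2}\{\hat{x}^*(\hat{p}_n) - \hat{x}(\hat{p}_n)\}/\{\hat{a}(n/k) q_{\hat\gamma}(\hat{d}_n)\} = O_{P'}(1)$ from Lemma \ref{l:mm12}, gives $\hat{p}_n^*/\hat{p}_n = 1 + o_{P'}(1)$, whence $\tilde{p}/\hat{p}_n = 1 + o_{P'}(1)$ and $\tilde{d}/\hat{d}_n = 1 + o_{P'}(1)$. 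Together with $\hat{a}^*(n/k)/\hat{a}(n/k) = 1 + o_{P'}(1)$ (Lemma \ref{l:mm11}) and $\hat{d}_n^{\hat\gamma^* - \hat\gamma} = 1 + o_{P'}(1)$ (from Lemma \ref{l:mm09} together with the assumption $k^{-1/2}\log(np_n) = o(1)$ already exploited at (\ref{eq-4.78c})), this reduces to
\[
\hat{x}^*(\hat{p}_n) - \hat{x}(\hat{p}_n) \;=\; \frac{\hat{p}_n^* - \hat{p}_n}{\hat{p}_n}\,\hat{a}(n/k)\,\hat{d}_n^{\hat\gamma}\,\{1 + o_{P'}(1)\}.
\]

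Because $w_\gamma(t) = t^{-\gamma} q_\gamma(t)$, one has $\hat{d}_n^{\hat\gamma}/q_{\hat\gamma}(\hat{d}_n) = 1/w_{\hat\gamma}(\hat{d}_n)$, and the previous display rearranges to
\[
\frac{k^{1/2}}{w_{\hat\gamma}(\hat{d}_n)}\left(\frac{\hat{p}_n^*}{\hat{p}_n} - 1\right) \;=\; \frac{k^{1/2}\{\hat{x}^*(\hat{p}_n) - \hat{x}(\hat{p}_n)\}}{\hat{a}(n/k)\,q_{\hat\gamma}(\hat{d}_n)} \;+\; o_{P'}(1).
\]
Applying Lemma \ref{l:mm12} with the deterministic level $p_n$ replaced by the random level $\hat{p}_n$ (so that $d_n$ becomes $\hat{d}_n$) then delivers (\ref{eq:r_p_wcc}). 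The second identity (\ref{eq:r_p_wcc-2}) follows from (\ref{eq:r_p_wcc}) together with $w_{\hat\gamma^*}(\hat{d}_n^*)/w_{\hat\gamma}(\hat{d}_n) = 1 + o_{P'}(1)$, a direct consequence of $\hat{d}_n^*/\hat{d}_n \to 1$ and $\hat\gamma^* - \hat\gamma \to 0$ in $P'$-probability, together with the asymptotic equivalents for $w_\gamma$ that parallel (\ref{eq:mm_l12.1}).

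The main obstacle will be transferring Lemma \ref{l:mm12} from the deterministic level $p_n$ to the random level $\hat{p}_n$: that proof uses $k^{-1/2}\log(np_n) = o(1)$ at several points (most visibly for $|d_n^{\hat\gamma^* - \hat\gamma} - 1| = o_{P'}(1)$). Since $\hat{p}_n/p_n$ is bounded away from $0$ and $\infty$ in $P'$-probability, we have $\log(n\hat{p}_n) = \log(np_n) + O_{P'}(1)$, so $k^{-1/2}\log(n\hat{p}_n) = o_{P'}(1)$, and each step of the proof of Lemma \ref{l:mm12} then carries over verbatim with $o_{P'}$ estimates in place of the original $o$ and $o_{P'}$ estimates.
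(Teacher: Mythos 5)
Your route is genuinely different from the paper's. The paper never inverts the quantile function: it writes $\hat{p}_n^*/\hat{p}_n=(1+Y_n^*)^{-1/\hat\gamma^*}(1+Y_n)^{1/\hat\gamma}$ with $Y_n=\hat\gamma\, d_n^{-\hat\gamma}(x_n-\hat x_n)/\hat a(n/k)$, takes logarithms, linearises in $Y_n,Y_n^*=o_{P'}(1)$, and thereby lands on $k^{1/2}(\hat x_n^*-\hat x_n)/\{q_{\hat\gamma}(d_n)\hat a(n/k)\}$ evaluated at the \emph{deterministic} level $p_n$ (plus a term in $x_n-\hat x_n$ handled by the non-bootstrap version of Theorem \ref{th:mm_x_boot}); only at the very end does it trade $w_{\hat\gamma}(d_n)$ for $w_{\hat\gamma}(\hat d_n)$. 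You instead exploit $\hat x(\hat p_n)=x_n=\hat x^*(\hat p_n^*)$ and differentiate in $p$, which eliminates the $x_n-\hat x_n$ term entirely and produces the $\hat d_n$-normalisation directly --- an attractive simplification. The cost is that you must apply Lemma \ref{l:mm12} at the random level $\hat p_n$; your justification (that $k^{-1/2}\log(n\hat p_n)=o_{P'}(1)$ is the only property of the level that the proof uses) is essentially right, but it is the load-bearing step of your argument and deserves more than one sentence, since the equivalences (\ref{eq:mm_l12.1}) are deterministic limits as $t\rightarrow\infty$ that must be transferred to the random argument $\hat d_n$, and the consistency $\hat p_n/p_n\rightarrow 1$ that you invoke should be explicitly sourced from the non-bootstrap Theorem 4.4.1.

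The one place where the argument as written does not close is the ``first-pass bound.'' The mean value theorem only gives $\hat x^*(\hat p_n)-\hat x^*(\hat p_n^*)=(\hat p_n^*-\hat p_n)\,\hat a^*(n/k)\,\tilde d^{\hat\gamma^*}/\tilde p$ with $\tilde p$ an \emph{uncontrolled} point between $\hat p_n$ and $\hat p_n^*$; to deduce $\hat p_n^*/\hat p_n=1+o_{P'}(1)$ from the smallness of the left-hand side you need $\tilde d^{\hat\gamma^*}/\tilde p$ to be comparable to $\hat d_n^{\hat\gamma^*}/\hat p_n$, which presupposes $\tilde p/\hat p_n=1+o_{P'}(1)$ --- i.e.\ the very consistency you are trying to establish. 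As stated this is circular. It is repairable: compute the increment in closed form,
$\hat x^*(\hat p_n)-\hat x^*(\hat p_n^*)=\hat a^*(n/k)\,\hat d_n^{\hat\gamma^*}\,\{1-(\hat p_n/\hat p_n^*)^{\hat\gamma^*}\}/\hat\gamma^*$,
so that Lemma \ref{l:mm12} yields $\{1-(\hat p_n/\hat p_n^*)^{\hat\gamma^*}\}/\hat\gamma^*=O_{P'}\bigl(k^{-1/2}w_{\hat\gamma}(\hat d_n)\hat d_n^{\hat\gamma-\hat\gamma^*}\bigr)=o_{P'}(1)$ directly; consistency, and indeed the full expansion without any mean value theorem, then follow. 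With that repair and a careful write-up of the random-level extension of Lemma \ref{l:mm12}, your proof goes through and is arguably tidier than the paper's.
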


\begin{proof}
Let
$$ Y_n = \left( \frac{\hat{\gamma}} {d_n^{\hat\gamma}} \frac{ x_{n}-\hat{x}_{n}}{\hat{a}\left(\frac{n}{k}  \right) }   \right),
\qquad
Y_n^* =  \left(\frac{\hat{\gamma}^*} {d_n^{\hat\gamma^*}} \frac{ x_{n}-\hat{x}_{n}^*}{\hat{a}^*\left(\frac{n}{k}  \right)  }  \right).$$
Using the definitions of $\hat{p}_n, \hat{x}_{n}, \hat{p}_n^*,$ and $\hat{x}_n^*$, and simplifying the expressions, we obtain
\begin{eqnarray}
\hat{p}_{n}
= \frac{k}{nd_n} \left( 1  + Y_n    \right)^{-1/\hat{\gamma}},
& \hat{p}_{n}^* = \frac{k}{nd_n}\left( 1  + Y_n^*  \right)^{-1/\hat{\gamma}^*}, &
\frac{\hat{p}_{n}^*}{\hat{p}_{n}} = \left( 1  +Y_n^*  \right)^{-1/\hat{\gamma}^*}
   \left( 1  + Y_n  \right)^{1/\hat{\gamma}}.
\label{eq-4.82a}
\end{eqnarray}
By the mean value theorem,
$
\log  \left(1  +Y_n\right) - \log 1 = \zeta_n^{-1}Y_n,
$
where $\zeta_n$ lies between $1$ and $1+Y_n$.
Using the previously established results and assumptions about the rates of convergence of $d_n$,
and (\ref{eq:mm_l12.1}), it may be verified that $Y_n= o_{P'}(1)$.
Therefore, $\zeta_n = 1 + o_{P'}(1)$ and
$
\log  \left(1  +Y_n\right) = (1 + o_{P'}(1))Y_n.
$

Similarly, by the mean value theorem,
$
\log \left(1  + Y_n^* \right) - \log 1 = \xi_n^{-1}Y_n,
$
where
$\xi_n$ lies between $1$ and $Y_n^*$.
As in the previous proof for $Y_n = o_{P'}(1)$, it may be verified that $Y_n^* = o_{P'}(1),$
 $\xi_n = 1 + o_{P'}(1),$
and
$ \log \left(1  +Y_n^*\right) = (1 + o_{P'}(1))Y_n^*$.

Now,
\begin{eqnarray*}
&&\frac{k^{1/2}}{w_{\hat\gamma}(d_n)}   \log \frac{\hat{p}_{n}^*}{\hat{p}_{n}}
 = \frac{k^{1/2}}{w_{\hat\gamma}(d_n)}  \Big[ - \{\hat{\gamma}^*\}^{-1} \log \left(1  + Y_n^*  \right)
       +  \{\hat{\gamma}\}^{-1} \log  \left(1  + Y_n \right)\Big]  \\
&=&  k^{1/2}  \frac{ \hat{x}_{n}^* -\hat{x}_{n}}{q_{\hat\gamma}(d_n)\hat{a}\left(\frac{n}{k}  \right) } \frac{\hat{a}\left(\frac{n}{k}  \right) }{\hat{a}^*\left(\frac{n}{k}  \right) }d_n^{\hat\gamma-\hat\gamma^*} (1 + o_{P'}(1)) +
 k^{1/2}\frac{ x_{n}-\hat{x}_{n}}{q_{\hat\gamma}(d_n) \hat{a}\left(\frac{n}{k}  \right) }  \left( 1- \frac{\hat{a}\left(\frac{n}{k}  \right) }{\hat{a}^*\left(\frac{n}{k}  \right) }d_n^{\hat\gamma-\hat\gamma^*}  \right)(1 + o_{P'}(1))  
\end{eqnarray*}
By (\ref{eq-4.78c})
$| d_n^{\hat\gamma^*-\hat\gamma} -1|=o_{P'}(1)$.
By (\ref{eq:mm l11}), $\hat{a}^*(n/k) /\hat a (n/k) =1 + O_P'(k^{-1/2})= 1+o_{P'}(1).$ By Theorem \ref{th:mm_x_boot},
$
 k^{1/2}q_{\hat\gamma}^{-1}(d_n) \hat{a}^{-1}(n/k) ( x_{n}-\hat{x}_{n})   = O_{P'}(1).
$
By Lemma \ref{l:mm13},
\[
 k^{1/2}  \frac{ \hat{x}_{n}^* -\hat{x}_{n}}{q_{\hat\gamma}(d_n)\hat{a}\left(\frac{n}{k}  \right) }  = \Gamma_n' + (\gamma_-)^2B_{k,n}'(1) - (\gamma_-)A_n' +  o_{P'}(1).
\]
Therefore,
\begin{eqnarray}
\frac{k^{1/2}}{w_{\hat\gamma}(d_n)}   \log \frac{\hat{p}_{n}^*}{\hat{p}_{n}}
 &= &\left( \Gamma_n' + (\gamma_-)^2B_{k,n}'(1) - (\gamma_-)A_n' +  o_{P'}(1)\right) (1+o_{P'}(1)) + o_{P'}(1) \notag \\
&=&\Gamma_n' + (\gamma_-)^2B_{k,n}'(1) - (\gamma_-)A_n' +  o_{P'}(1).
\label{eq:mm_l13.3}
\end{eqnarray}

It follows from (\ref{eq:mm_l13.3}) that $\log (\hat{p}_{n}^*/ \hat{p}_{n}) =  o_{P'}(1)$, and hence $|\hat{p}_{n}^*/ \hat{p}_{n} -1 |= o_{P'}(1)$.
Therefore, by the delta method, we have (\ref{eq:r_p_wcc}).
Let $\hat{d}_n = k/(n \hat{p}_n)$  and $\hat{d}_n^* = k/(n \hat{p}_n^*)$.
Then  (see Corollary 4.4.4, \citealt{Ha:Ferr:2006})
since $\hat{\gamma}^* - \hat{\gamma} = O_{P'}(k^{-1/2})$ and $\hat{p}_{n}^*/ \hat{p}_{n} = 1+ o_{P'}(1)$, we have
$\{w_{\hat{\gamma}}(\hat{d}_n)/w_{{\gamma}}({d}_n)\}$ and
$\{w_{\hat{\gamma}^*}(\hat{d}_n^*)/w_{\hat{\gamma}}(\hat{d}_n)\}$ converge to $1$, in probability.
Therefore, we also have
\begin{eqnarray}
\frac{k^{1/2}}{w_{\hat{\gamma}^*}(\hat{d}_n^*)}   \log \frac{\hat{p}_{n}^*}{\hat{p}_{n}}
&=&\Gamma_n' + (\gamma_-)^2B_{k,n}'(1) - (\gamma_-)A_n' +  o_{P'}(1).
\label{eq:mm_l13.3b}
\end{eqnarray}
\end{proof}


\begin{lemma}
\label{l:mm14}
Let $\bfV=(V_1,V_2, \ldots)$  and $\bfV'=(V_1',V_2', \ldots)$  be two sequences of random variables as
in (\ref{eq:bqp}),
 $S_n(\bfV,\bfV')$ be a function of $V_1, \ldots, V_n$
 and $V_1', \ldots, V_n'$, and $S_n(\bfV,\bfV')=o_{P'}(1)$.
 Then $S_n(\bfV,\bfV')=o_{P^*}(1)$, in probability, where
%
 $P^*$ denotes the probability conditional on $V'$.
\end{lemma}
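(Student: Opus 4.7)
The lemma is a general measure-theoretic fact, and the natural approach is to combine the tower property of conditional expectation with Markov's inequality, viewing $P^*(\cdot) = P'(\cdot \mid \bfV')$ as a regular conditional probability. I would not need any of the specific structure of the bootstrap construction, only that $P'$ integrates $P^*$ against the marginal law of $\bfV'$.

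First, I would fix an arbitrary $\epsilon > 0$ and set
\[
A_n \;=\; \{\,|S_n(\bfV,\bfV')| > \epsilon\,\}, \qquad Z_n \;=\; P^{*}(A_n).
\]
Because $S_n$ is a measurable function of $(V_1,\ldots,V_n,V_1',\ldots,V_n')$ and $P^{*}$ is the conditional probability given $\bfV'$, the random variable $Z_n$ is a $[0,1]$-valued function of $\bfV'$ alone.

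Second, by the tower property,
\[
E[Z_n] \;=\; E\bigl[P'(A_n \mid \bfV')\bigr] \;=\; P'(A_n) \;=\; P'\bigl(|S_n| > \epsilon\bigr),
\]
and the hypothesis $S_n = o_{P'}(1)$ forces the right-hand side to tend to $0$ as $n \to \infty$. Thus $E[Z_n] \to 0$.

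Third, I would invoke Markov's inequality: for any $\delta > 0$,
\[
\pr\bigl(Z_n > \delta\bigr) \;\leq\; \frac{E[Z_n]}{\delta} \;=\; \frac{P'(|S_n| > \epsilon)}{\delta} \;\longrightarrow\; 0.
\]
This says that $Z_n \to 0$ in probability under the marginal law of $\bfV'$, i.e.\ $P^{*}(|S_n| > \epsilon) = o_p(1)$. Since $\epsilon > 0$ was arbitrary, this is precisely the statement $S_n(\bfV,\bfV') = o_{P^{*}}(1)$ in probability, completing the proof.

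There is no real analytic obstacle here; the only thing to take care of is the bookkeeping of the three distinct probability measures involved ($P'$ on the joint space, $P^{*}$ on the conditional space given $\bfV'$, and the marginal law of $\bfV'$ governing the ``in probability'' conclusion). The argument is just the standard fact that $L^{1}$-convergence of conditional probabilities implies in-probability convergence, packaged for the bootstrap setting.
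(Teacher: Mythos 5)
Your proof is correct and follows essentially the same route as the paper: tower property to get $E[Z_n]=P'(|S_n|>\epsilon)\to 0$, then a first-moment bound to conclude $Z_n\to 0$ in probability (the paper passes through $E[Z_n^2]\le E[Z_n]$ and Chebyshev, which for a $[0,1]$-valued $Z_n$ is just your Markov step in disguise). The only discrepancy is cosmetic: you condition on $\bfV'$ as the lemma's statement literally says, whereas the paper's own proof conditions on $\bfV$ (the data), which is what the bootstrap application requires; the argument is identical either way.
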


\begin{proof}
Let $\epsilon >0$ and  $\delta>0$ be given. Suppose that $S_n(\bfV,\bfV')=o_{P'}(1) $.
Let $Z_n(\bfV; \delta)=P^*\left[ S_n(\bfV,\bfV') \geq \delta | \bfV  \right]$,
where $P^*$ denotes the conditional probability given $\bfV$.
Then
$E[Z_n^2(\bfV; \delta)] \leq E[Z_n(\bfV; \delta)]
= P'(S_n(\bfV,\bfV' \geq \delta)
\stackrel{P'}{\rightarrow} 0$ as $n \rightarrow \infty.$
and, by Chebyshev inequality, $P_{\bfV}[ Z_n(\bfV; \delta) >\epsilon] =o_{P'}(1)$.
Therefore,
$P_{\bfV} \left( P_{\bfV'|\bfV}  \left[|S_n(\bfV,\bfV')| \geq \delta | \bfV\right] > \epsilon \right) \rightarrow 0 \ \mbox{as} \ n \rightarrow \infty,$
and hence $S_n(\bfV,\bfV')=o_{P^*}(1)$ in probability $[P]$.

\end{proof}


\begin{proposition}
\label{p:mm1}
$$(P_n', Q_n', A_n', \Gamma_n') \stackrel{d}{\rightarrow} (P, Q, A, \Gamma), \quad
(\tilde{P}_n, \tilde{Q}_n, \tilde{A}_n, \tilde{\Gamma}_n) \stackrel{d}{\rightarrow} (P, Q, A, \Gamma), \ \mbox{as } n \rightarrow \infty.$$
\end{proposition}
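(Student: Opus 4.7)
The plan is to show that both driving processes $B_{k,n}'$ and $\tilde W_{k,n}$ converge weakly in $C[0,1]$ to a standard Wiener process $W$, and then deduce the joint convergence of $(P_n',Q_n',A_n',\Gamma_n')$ and $(\tilde P_n,\tilde Q_n,\tilde A_n,\tilde\Gamma_n)$ via a continuous-mapping-theorem argument with a truncation near the origin. For the first step, $B_{k,n}'(s)=(n/k)^{1/2}B_n'(ks/n)$ is a centred Gaussian process whose covariance for $s\le t$ is $(n/k)[(ks/n)(1-kt/n)]=s(1-kt/n)\to \min(s,t)$ as $k/n\to0$, i.e. the covariance of $W$. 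Tightness in $C[0,1]$ follows from the standard Kolmogorov criterion using the Gaussian fourth-moment bound $E|B_{k,n}'(s)-B_{k,n}'(t)|^4\le C(t-s)^2$ (which is inherited from the Brownian bridge with constant independent of $n$). The process $\tilde W_{k,n}(s)=(n/k)^{1/2}B_n(1-ks/n)$ has the identical covariance, so the same argument applies.

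Next, observing that $R=(1-\gamma_-)^2(1-2\gamma_-)(\tfrac{1-2\gamma_-}{2}Q-2P)$ and $\Gamma=\gamma_+P+R$ are deterministic linear combinations of $P$, $Q$, and of course $A$ is a deterministic linear combination of $W(1)$, $P$, $Q$, it suffices to prove joint weak convergence $(B_{k,n}'(1),P_n',Q_n')\Rightarrow(W(1),P,Q)$ (and similarly for the tilde triple). The marginal $B_{k,n}'(1)$ is handled by the already established weak convergence of the process. For the integral functionals I would use a truncation: write
\begin{equation*}
P_n'=\int_0^\delta\!\Bigl(\tfrac{B_{k,n}'(s)}{s^{\gamma_-+1}}-B_{k,n}'(1)\Bigr)ds+\int_\delta^1\!\Bigl(\tfrac{B_{k,n}'(s)}{s^{\gamma_-+1}}-B_{k,n}'(1)\Bigr)ds,
\end{equation*}
and analogously for $Q_n'$. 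The second integral is a continuous functional of the restriction of $B_{k,n}'$ to $[\delta,1]$ in the uniform topology, so the continuous-mapping theorem gives its convergence to the corresponding integral with $W$. Joint convergence with $B_{k,n}'(1)$ follows from the joint weak convergence of the process and its terminal value.

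The main obstacle is showing that the tail contribution from $[0,\delta]$ is asymptotically negligible uniformly in $n$. For this I would invoke Lemma \ref{l:r_0} (whose proof carries over verbatim with $B_{k,n}$ replaced by $B_{k,n}'$, and also works for $\tilde W_{k,n}$ by the symmetry $B_n(1-\cdot)\stackrel{d}{=}B_n(\cdot)$): for any $\nu\in[0,1/2)$ and $\epsilon>0$,
\begin{equation*}
\lim_{\delta\downarrow0}\liminf_{n\to\infty}P'\Bigl\{\sup_{0\le s\le\delta}s^{-\nu}|B_{k,n}'(s)|<\epsilon\Bigr\}=1.
\end{equation*}
Choosing $\nu$ close to $1/2$, so that $\nu>\gamma_-+1$ (recall $\gamma_-\le0$, hence $\gamma_-+1\le1$), makes $s^{-\gamma_--1}\cdot s^{\nu}$ integrable on $[0,\delta]$, and the tail integral is bounded by $\epsilon\int_0^\delta s^{\nu-\gamma_--1}ds$, which tends to $0$ as $\delta\downarrow0$. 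The same bound together with $|(s^{-\gamma}-1)/\gamma_-|\le C(1+|\log s|)$ or $C s^{-\gamma_-}$ yields the corresponding tail estimate for $Q_n'$. The analogous Wiener-process bound is provided by \eqref{l:r_00}.

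Combining these ingredients, the continuous-mapping theorem gives $(P_n',Q_n',A_n',\Gamma_n')\stackrel{d}{\to}(P,Q,A,\Gamma)$. The proof for the tilde quantities is identical once one notes that $\tilde W_{k,n}$ obeys the same weak convergence and the same near-origin bound, completing the proposition.
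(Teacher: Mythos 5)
Your argument is correct in substance but follows a genuinely different route from the paper. The paper's proof is a representation argument: it writes $B_{k,n}'(s)\stackrel{d}{=}W(s)-(ks/n)W(n/k)$ (the tail-rescaled Brownian bridge is exactly a Wiener process minus a linear drift), observes that the drift term is $O_{P'}((k/n)^{1/2})$ uniformly after division by $s^{\gamma_-+1}$, and concludes that $B_{k,n}'(s)/s^{\gamma_-+1}$ converges weakly to $W(s)/s^{\gamma_-+1}$; the convergence of the integral functionals is then asserted with the near-origin integrability left implicit. You instead re-derive the weak convergence $B_{k,n}'\Rightarrow W$ in $C[0,1]$ from scratch (covariance computation plus a Gaussian fourth-moment Kolmogorov tightness bound — both computations are correct), and then handle the functionals by truncating at $\delta$, applying the continuous mapping theorem on $[\delta,1]$, and controlling the contribution from $[0,\delta]$ uniformly in $n$ via the modulus bound of Lemma \ref{l:r_0} (which indeed transfers verbatim to $B_{k,n}'$ and $\tilde W_{k,n}$) together with \eqref{l:r_00} for the limit. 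Your route is longer but actually supplies the step the paper glosses over, namely why the singular integrals over $(0,\delta)$ are negligible; the paper's route is shorter because the exact Gaussian identity replaces the invariance-principle machinery.

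One slip to fix: the condition you impose for the tail estimate, $\nu>\gamma_-+1$ with $\nu<1/2$, is unattainable whenever $\gamma_-\ge -1/2$ (in particular for $\gamma_-=0$). The integrability of $s^{\nu-\gamma_--1}$ near the origin only requires $\nu-\gamma_--1>-1$, i.e.\ $\nu>\gamma_-$, which holds automatically for any $\nu>0$ since $\gamma_-\le 0$; with that correction the bound $\epsilon\int_0^\delta s^{\nu-\gamma_--1}\,ds\to 0$ as $\delta\downarrow 0$ goes through exactly as you intend, and the same (with the extra logarithmic or $s^{-\gamma_-}$ factor) covers $Q_n'$. This is a misstated sufficient condition rather than a gap in the argument.
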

\begin{proof}
 Since$
B_{k,n}'(s)
 =  W(s)- ({ks}/{n})W\left({n}/{k}\right), \ (0\leq s \leq 1),
$
in distribution,
  we have
\begin{eqnarray}
\frac{B_{k,n}'(s)}{s^{\gamma_-+1}}  &\stackrel{d}{=}& \frac{W(s)}{s^{\gamma_-+1}}- \frac{k}{n}W\left(\frac{n}{k}\right)s^{-\gamma_-} 
= \frac{W(s)}{s^{\gamma_-+1}} -  \left(\frac{k}{n} \right)^{1/2} \left(\frac{k}{n} \right)^{1/2} W\left(\frac{n}{k}\right)s^{-\gamma_-}. \label{eq:mm_01}
\end{eqnarray}
Since $(k/n)^{1/2}W(n/k)$ is bounded in probability,
and $s^{-\gamma_-}$ is uniformly continuous on $[0,1]$, we have
$\sup_{0 \leq s \leq 1} |\left( {k}/{n} \right)^{1/2} \left({k}/{n} \right)^{1/2} W\left({n}/{k}\right)s^{-\gamma_-} = $ $ o_{P'}(1).$
Therefore, the right hand side of  (\ref{eq:mm_01}) is tight and hence it converges weakly to $W(s)s^{-(\gamma_-+1)}$,
($0 < s <1$).
\end{proof}



\subsection{Proofs of the theorems}
\label{sec-3.4}

In this subsection, we provide the proofs of the theorems in the main paper.
These results are in terms of the conditional probability $P^*$ given the observations
$X_1, \dots, X_n.$

\smallskip

\noindent \textit{Prof of Theorem \ref{th:mm_b_boot}}.
For a statement and proof of the corresponding non-bootstrap version of this theorem,
see Lemma 4.3 in \cite{deHa:Resn:1993}; according to this Lemma, under the conditions of this theorem,
$k^{1/2}\{{\hat b(n/k)-b(n/k)}\}/{ a(n/k)}$ converges in distribution to $B$
as $n \rightarrow \infty$.
By Lemma \ref{l:r_3a},
$
[k^{1/2}/\hat a(n/k)]\{\hat b^*(n/k)-\hat b(n/k)\} =  B_{k,n}'(1)+ o_{P'}(1).
$
Then, by Lemma \ref{l:mm14},
$
[k^{1/2}/\hat a(n/k)] \{\hat b^*(n/k)-\hat b(n/k)\}= B_{k,n}'(1)+ o_{P^*}(1),
$
 in probability $[P]$.
Now,  the result follows by Proposition \ref{p:mm1}.
Since $B_n$ is independent of $\bfV$,  and hence of ${\bfX}$,
 it follows from Proposition \ref{p:mm1}
  that
$ [k^{1/2}/\hat a(n/k)]\{\hat b^*(n/k)-\hat b(n/k)\}  \stackrel{d^*}{\rightarrow} B, $
in probability, as $n \rightarrow \infty.$
\hfill $\blacksquare$


\smallskip

\noindent \textit{Prof of Theorem \ref{th:mm_gamma_boot}}.
For a statement and proof of the corresponding non-bootstrap version of this theorem,
see Lemma 4.6 in \cite{deHa:Resn:1993}; according to this Lemma, under the conditions of this theorem,
$k^{1/2}\{\hat{\gamma}-\gamma\} \stackrel{d}{\rightarrow} \Gamma$
as $n \rightarrow \infty$.
By (\ref{eq:mm l9.3}),
$k^{1/2}(\hat{\gamma}^* - \hat{\gamma}) =  \Gamma_n' +o_{P'}(1).$
Then, by Lemma \ref{l:mm14},
$
k^{1/2}(\hat{\gamma}^* - \hat{\gamma}) =  \Gamma_n'+ o_{P^*}(1),
$
 in probability $[P]$;
note that the remainder term is $o_{P^*}(1)$, not $o_{P'}(1).$
Since $\Gamma_n'$ is independent of ${\bfV}$,  and hence of ${\bfX}$,
the result follows by Proposition \ref{p:mm1}.
\hfill $\blacksquare$

\smallskip

\noindent \textit{Proof of Theorem \ref{th:mm_a_boot}}.
For this proof, $p_n$ is given and $x_n$ is the unknown quantile corresponding to $p_n$.
For a statement and proof of the corresponding non-bootstrap version of this theorem,
see Lemma 4.7 in \cite{deHa:Resn:1993}; according to this Lemma, under the conditions of this theorem,
$k^{1/2}\left(\{{\hat{a}(n/k)}/{{a}(n/k)}\} - 1\right) \stackrel{d}{\rightarrow} A$
as $n \rightarrow \infty$.
By Lemma \ref{l:mm11},
$
k^{1/2}\left(\{{\hat{a}^*(n/k)}/{\hat{a}(n/k)}\} - 1\right) =A_n' +o_{P'}(1).
$
Then, by Lemma \ref{l:mm14},
$
k^{1/2}\left( \{{\hat{a}^*(n/k)}/{\hat{a}(n/k)}\} - 1\right) =A_n' + o_{P^*}(1)
$
 in probability $[P]$.  Now, the result follows by
 Proposition \ref{p:mm1}.
 \hfill $\blacksquare$

\smallskip

\noindent \textit{Proof of Theorem \ref{th:mm_x_boot}}.
For this proof, $x_n$ is known, and $p_n= P(X> p_n)$ is the unknown tail probability.
For a statement and proof of the corresponding non-bootstrap version of this theorem,
see Theorem 4.3.1 in \cite{Ha:Ferr:2006}; according to this result,
\begin{equation}
\frac{k^{1/2}}{q_{\hat\gamma}(d_n)}  \frac{\hat{x}_{n}-x_{n}}{\hat a\left(\frac{n}{k}  \right)}  \stackrel{d}{\rightarrow}  \Gamma + (\gamma_-)^2B - (\gamma_-)A \quad \mbox{ as } n \rightarrow \infty.
\end{equation}
under the conditions of Theorem \ref{th:mm_x_boot}.
By Lemma \ref{l:mm12},
\[
[{k^{1/2}}/{q_{\hat\gamma}(d_n)}]  [\{\hat{x}_n^*-\hat{x}_n\}/{\hat{a}\left({n}/{k} \right)} ] = \Gamma_n' + (\gamma_-)^2B_{k,n}'(1) - (\gamma_-)A_n' +  o_{P'}(1).
\]
The Lemma also shows that the result holds with the scaling factor,
${q_{\hat\gamma}(d_n)}\hat{a}\left({n}/{k}  \right)$ replaced
by its bootstrap counterpart
${q_{\hat{\gamma^*}}(d_n)}\hat{a}^*\left({n}/{k}  \right)$.
Then, by Lemma \ref{l:mm14},
\[
\{{k^{1/2}}/{q_{\hat{\gamma^*}}(d_n)}\}
 \{ (\hat{x}_n^*-\hat{x}_n)/{\hat{a}^*\left({n}/{k}  \right)} \}
    = \Gamma_n' + (\gamma_-)^2B_{k,n}'(1) - (\gamma_-)A_n' + o_{P^*}(1)
 \]
 in probability $[P]$. Therefore, by Proposition \ref{p:mm1},
$$
\{{k^{1/2}}/{q_{\hat{\gamma^*}}(d_n)} \} \{(\hat{x}_n^*-\hat{x}_n)/{\hat{a}^*\left({n}/{k}  \right)} \} \stackrel{d^*}{\rightarrow}  \Gamma + (\gamma_-)^2B - (\gamma_-)A.
\hfill  \blacksquare
$$

\smallskip

\noindent \textit{Proof of Theorem \ref{th:mm_p_boot}}.
For a statement and proof of the corresponding non-bootstrap version of this theorem,
 see Theorem 4.4.1 in \cite{Ha:Ferr:2006};
 it is shown there that
 \begin{equation}
\frac{k^{1/2}}{w_{\hat\gamma}(d_n)} \left( \frac{\hat{p}_n}{p_n}-1 \right) \stackrel{d}{\rightarrow} \Gamma + (\gamma_-)^2B - (\gamma_-)A.
\end{equation}

By Lemma \ref{l:mm13},
\[
\{ {k^{1/2}}/{w_{\hat\gamma}(\hat{d}_n)}\} \{ {\hat{p}_n^*}/{\hat p_n}-1 \}  = \Gamma_n' + (\gamma_-)^2B_{k,n}'(1) - (\gamma_-)A_n' +  o_{P'}(1).
\]
Then, by Lemma \ref{l:mm14},
\[
\{{k^{1/2}}/{w_{\hat\gamma}(\hat{d}_n)}\} \left( {\hat{p}_n^*}/{\hat p_n}-1 \right)  = \Gamma_n' + (\gamma_-)^2B_{k,n}'(1) - (\gamma_-)A_n' + o_{P^*}(1)
 \]
 in probability $[P]$. Therefore,
$
\{ {k^{1/2}}/{w_{\hat\gamma}(\hat{d}_n)} \} \left( {\hat{p}_n^*}/{\hat p_n}-1 \right)  \stackrel{d^*}{\rightarrow}  \Gamma + (\gamma_-)^2B - (\gamma_-)A,
$
in probability, by Proposition \ref{p:mm1}.
Again, the result holds with the scaling factor
${w_{\hat\gamma}(\hat{d}_n)}$
 replaced by its bootstrap counterpart
 ${w_{\hat{\gamma^*}}(\hat{d}_n^*)}$ where
 $d_n^* = k/(n \hat{p}_n^*).$
 \hfill $\blacksquare$

\bibliographystyle{natbib}
\bibliography{Regression}

\end{document}